\newtheorem{thm}{Theorem}[section]
\newtheorem{lma}{Lemma}[section]
\numberwithin{equation}{section}
\begin{document}
	
	\title[Congruence Results on $t-$Schur  overpartitions]{Extending Recent Congruence Results on $t-$Schur  overpartitions}
	\author{K. C. Ajeyakashi, H. S. Sumanth Bharadwaj and S. Chandankumar}
	\address[K. C. Ajeyakashi]{Department of Mathematics, Faculty of Engineering and Technology, Jain (Deemed-to-be University), Bangalore, Karnataka, India.} \email{k.ajeya@jainuniversity.ac.in}
	
	\address[K. C. Ajeyakashi] {Department of Mathematics and Statistics, Faculty of Natural Sciences, M. S. Ramaiah University of Applied Sciences, Peenya Campus, Peenya 4th Phase, Bangalore-560 058, Karnataka, India.} \email{19MPRP727001@msruas.ac.in}
	
	\address[H. S. Sumanth Bharadwaj]{Department of Mathematics and Statistics, M. S. Ramaiah University of Applied Sciences, Peenya Campus, Peenya 4th Phase, Bengaluru-560 058, Karnataka, India.} 
	\email{sumanthbharadwaj@gmail.com}
	
	\address[S. Chandankumar]{Department of Mathematics and Statistics, M. S. Ramaiah University of Applied Sciences, Peenya Campus, Peenya 4th Phase, Bengaluru-560 058, Karnataka, India.} 
	\email{chandan.s17@gmail.com}
	
	\subjclass[2010]{11P83, 05A15, 05A17} \keywords{$t-$Schur  overpartitions, color partitions}
	
	\maketitle
	\begin{abstract}
		Recently, Nadji and Ahmia~\cite{nadji2021} introduced the notion of $t$-Schur overpartitions and investigated their combinatorial and arithmetic properties. In this paper, we extend their work and establish several new congruence relations for $t$-Schur overpartitions. For example, for all $n \ge 0$ we prove
		\[
		\overline{S_9}(24n+23)\equiv 0 \pmod{32}.
		\]
	\end{abstract}
	
	\section{Introduction}
	An overpartition of a nonnegative integer $n$ is a nonincreasing sequence of positive integers whose sum is $n$, where the first occurrence of each distinct part may be overlined. The generating function for overpartitions is
	
	\begin{equation}
		\sum_{n=0}^{\infty }\overline{p}\left ( n\right )q^n=\prod_{n=1}^{\infty }\frac{\left ( 1+q^{n} \right )}{\left ( 1-q^{n} \right )}=\frac{(-q;q)_\infty}{(q;q)_\infty}=1+2q+4q^{2}+8q^{3}+14q^{4}+24q^{5}+...,
	\end{equation}
	where $\overline{p}(n)$ denotes the number of overpartitions of $n$. Here
	
	\[
	(a;q)_n :=
	\begin{cases}
		1, & n=0,\\[2pt]
		\displaystyle\prod_{k=1}^{n} (1 - a q^{k-1}), & n>0,
	\end{cases}
	\]
	is the $q$-shifted factorial and
	\[(a;q)_\infty=\lim_{n\rightarrow\infty}(a;q)_n, \qquad |q|<1.\]
	For more details on arithmetic properties of the overpartition function one can refer to \cite{2ABH,2ABH1,2ABH2,2ABE,2KB}.
	Recently, Nadji and Ahmia~\cite{nadji2021} investigated partitions into parts that are 
	simultaneously regular and distinct from both combinatorial and arithmetic perspectives. 
	For any odd integer $t \geq 3$, they defined the $t$-Schur partitions of $n$ as those 
	partitions into parts congruent to $i \pmod{2t}$, where $i\in I(t)$ and
	\[
	I(t) = \{1, 3, 5, 7, \ldots, 2t-1\} \setminus \{t\}.
	\]
	In particular, when $t=3$, this reduces to the classical Schur partitions. These partitions may also be interpreted as \emph{biregular partitions}, namely partitions in which 
	no part is divisible by both $\ell_1$ and $\ell_2$ for $\gcd(\ell_1,\ell_2)=1$. 
	In the case of $t$-Schur partitions, the parameters are $(\ell_1,\ell_2)=(2,t)$, so the classical 
	Schur partitions correspond to the $(2,3)$-biregular case. The number of $t$-Schur partitions, denoted by $S_t(n)$, the generating function is given by
	\[
	\sum_{n \geq 0} S_t(n) q^n: 
	= \prod_{n \geq 1} \frac{1-q^{t(2n-1)}}{1-q^{2n-1}}
	= \frac{f_2 f_t}{f_1 f_{2t}},\,\,\text{where}\,\, f_{k}:=\left ( q^{k} ;q^{k}\right )_{\infty }.
	\]
	Moreover, $S_t(n)$ also counts two equivalent classes of partitions: 
	\begin{enumerate}
		\item partitions of $n$ into parts that are simultaneously $2$-regular and $t$-distinct 
		(no part occurs $t$ or more times), and 
		\item partitions of $n$ into distinct parts that are not divisible by $t$.
	\end{enumerate}
	Extending this idea of $t$-Schur partitions, Nadji and Ahmia~\cite{nadji2021} introduced the notion of $t$-Schur overpartitions of a positive integer $n$, denoted by $\overline{S_t}(n)$, which is generalization of $t$-Schur partitions. The generating function is given by
	\begin{equation}\label{tsp}
		\sum_{n \geq 0} \overline{S_t}(n) q^n := \prod_{n \geq 1} \frac{(1 + q^{2n-1})(1 - q^{t(2n-1)})}
		{(1 + q^{t(2n-1)})(1 - q^{2n-1})} = \frac{f_2^3 f_{t}^2 f_{4t}}{f_1^2 f_4 f_{2t}^3}.
	\end{equation}
	For example, $\overline{S_5}(6) = 8$, with the corresponding set:
	\[
	(1^6), (\overline{1},1^5), (3,1^3), (\overline{3},1^3), (3,\overline{1},1^2), (\overline{3},\overline{1},1^2), (3^2), (\overline{3},3).
	\]
	If $\overline{S_t^r}(n)$ denotes the number of $t$-Schur overpartitions of $n$ with $r$-tuples, then
	\begin{equation}\label{tspr}
		\sum_{n \geq 0} \overline{S_t^r}(n) q^n: = \prod_{n \geq 1} \frac{(1 + q^{2n-1})^r(1 - q^{t(2n-1)})^r}
		{(1 + q^{t(2n-1)})^r(1 - q^{2n-1})^r} = \frac{f_2^{3r} f_{t}^{2r} f_{4t}^r}{f_1^{2r} f_4^r f_{2t}^{3r}}.
	\end{equation}
	They also showed that for any odd $t \ge 3$, $\overline{S_t}(n)$ equals the number of partitions of $2n$ into parts not divisible by $t$, where odd parts occur with multiplicity~$2$ and even parts are distinct. They derived Ramanujan-like congruences and dissections for $\overline{S_t}(n)$ in the case $t=3$. In particular, they posed the following directions:
	\begin{enumerate}
		\item What interesting arithmetic properties does $\overline{S_t}(n)$ exhibit?
		\item Are there connections between $t$-Schur partitions/overpartitions and other areas?
	\end{enumerate}
	
	
	
	For every odd integer $t \ge 3$, $t$-Schur overpartitions are examples of $(2,t)$-biregular overpartitions. In related work, Adiga et al.~\cite{adiga2018congruences} obtained several infinite families of congruences modulo~$8$ for $\overline{S_t}(n)$; for instance, for nonnegative integers $\zeta,n$ and $t\equiv1\pmod{8}$,
	\[
	\overline{S_t}\big(2^{1+\zeta}(16n+14)\big) \equiv 0 \pmod{8}.
	\]
	Later, Shivashankar and Gireesh~\cite{shivashankar2022congruences} established further Ramanujan-like congruences, including infinite families modulo~$3$ and~$8$ for $\overline{S_3}(n)$ and modulo~$8$ for $\overline{S_5}(n)$. We record the following lemma from~\cite{nadji2021}.
	
	
	\begin{lma}\label{lem:nadji-ahmia}
		For all $n\ge0$,
		\begin{align}
			\sum_{n \ge 0} \overline{S}_3(12n + 7) q^n &= 8\, \frac{f_2^{4} f_3^4 f_4^4}{f_1^{10} f_6^2},\label{eq2}\\
			\sum_{n \ge 0} \overline{S}_3(12n + 11) q^n &= 16\, \frac{f_2^3 f_3 f_4^4 f_6}{f_1^9}.\label{eq3}
		\end{align}
	\end{lma}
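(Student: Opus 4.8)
The plan is to start from the generating function \eqref{tsp} specialized to $t=3$,
\[
\sum_{n\ge 0}\overline{S}_3(n)\,q^n = \frac{f_2^3 f_3^2 f_{12}}{f_1^2 f_4 f_6^3}=:A(q),
\]
and to isolate the two target progressions by a dissection cascade. Since $12=4\cdot 3$ and both residues $7,11$ satisfy $7\equiv 11\equiv 3\pmod 4$ while splitting mod $3$ as $7\equiv 1$, $11\equiv 2$, the idea is to first extract the subprogression $4n+3$ by two successive $2$-dissections, then apply a single $3$-dissection and read off the parts whose exponent is $\equiv 1$ and $\equiv 2\pmod 3$. Concretely, set $C(q):=\sum_{n\ge 0}\overline{S}_3(4n+3)q^n$ and write its $3$-dissection as $C(q)=C_0(q^3)+qC_1(q^3)+q^2C_2(q^3)$. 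Because $4(3m+1)+3=12m+7$ and $4(3m+2)+3=12m+11$, the two assertions \eqref{eq2} and \eqref{eq3} are exactly $C_1(q)=8\,f_2^4 f_3^4 f_4^4/(f_1^{10}f_6^2)$ and $C_2(q)=16\,f_2^3 f_3 f_4^4 f_6/f_1^9$. A convenient organizing observation is that $A(q)$ factors through Ramanujan's theta function $\varphi(q)=f_2^5/(f_1^2f_4^2)$, namely $A(q)=\varphi(q)\varphi(-q^6)/\bigl(\varphi(-q^2)\varphi(q^3)\bigr)$ (verified by rewriting both sides in the $f_k$), which makes the parity bookkeeping transparent.

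For the two $2$-dissections I would localize the base-level variation in the factor $f_3^2/f_1^2$, since the complementary factor $f_2^3 f_{12}/(f_4 f_6^3)$ is already a series in $q^2$. Substituting the standard $2$-dissection of $1/f_1^2$ (equivalently of $\varphi(q)$), collecting the odd-exponent terms, and simplifying yields a closed eta-quotient for $\sum_{n\ge0}\overline{S}_3(2n+1)q^n$; repeating the procedure on that series produces the closed form for $C(q)=\sum_{n\ge0}\overline{S}_3(4n+3)q^n$. These steps are essentially mechanical once the correct $2$-dissection lemmas are in hand, and the powers of $2$ that ultimately yield the constants $8$ and $16$ can be tracked as they accumulate.

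The crux is the final $3$-dissection of $C(q)$. Here I would invoke a $3$-dissection identity for the relevant eta-quotient—most efficiently the $3$-dissection of $\varphi$, or of $f_1$, or of $f_1/f_3$—substitute it into the closed form for $C(q)$, and separate the terms by the residue of the exponent mod $3$. Extracting the $\equiv 1$ and $\equiv 2\pmod 3$ components and then collapsing the resulting products down to the compact right-hand sides of \eqref{eq2} and \eqref{eq3} is where essentially all of the difficulty lies: it requires choosing the right intermediate theta identity and carrying out a fair amount of careful $f_k$-algebra to reach the stated quotients (and to see the factor of $2$ that distinguishes $C_2$ from $C_1$ emerge). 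I expect this simplification, rather than the routine $2$-dissections, to be the main obstacle. As a final safeguard I would confirm both identities numerically by matching the first several coefficients of each side, computing $\overline{S}_3(7),\overline{S}_3(19),\dots$ and $\overline{S}_3(11),\overline{S}_3(23),\dots$ directly from $A(q)$.
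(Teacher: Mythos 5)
First, a point of calibration: the paper itself does not prove Lemma~\ref{lem:nadji-ahmia} at all --- it is recorded verbatim from \cite{nadji2021} --- so your attempt must be judged on its own merits and against the dissection machinery the paper deploys elsewhere. Your first stage is correct and verifiable with the paper's toolkit: with $t=3$ in \eqref{tsp}, substituting \eqref{f3.2_f1.2} and extracting odd exponents gives $\sum_{n\ge0}\overline{S}_3(2n+1)q^n=2f_4f_{12}/(f_1f_3)$ (this is exactly \eqref{s32}), and then \eqref{1byf1f3} gives $C(q)=\sum_{n\ge0}\overline{S}_3(4n+3)q^n=2f_2^6f_{12}^2/(f_1^4f_3^2f_4^2)$. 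Your residue bookkeeping ($12n+7=4(3m+1)+3$, $12n+11=4(3m+2)+3$) and your factorization $A(q)=\varphi(q)\varphi(-q^6)/\bigl(\varphi(-q^2)\varphi(q^3)\bigr)$ both check out.

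The genuine gap is the final step, which you explicitly defer and whose difficulty your plan underestimates. Since $f_{12}^2/f_3^2$ is a series in $q^3$, you must $3$-dissect $f_2^6/(f_1^4f_4^2)=(f_2/f_1^2)^2\cdot(f_2^2/f_4)^2$. The relevant single-quotient dissections are \eqref{f2byf12} and \eqref{f12byf2} (the latter with $q\mapsto q^2$), but both enter \emph{squared}, so after multiplying out, each residue class of $C(q)$ emerges as a sum of several distinct theta products; the lemma asserts that these sums collapse to the single quotients $8f_2^4f_3^4f_4^4/(f_1^{10}f_6^2)$ and $16f_2^3f_3f_4^4f_6/f_1^9$. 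That collapse is the entire content of the lemma, and it does not follow from any one off-the-shelf ``$3$-dissection of $\varphi$, or of $f_1$, or of $f_1/f_3$'' --- it requires additional theta addition identities that you neither identify nor prove; and your proposed numerical check of ``the first several coefficients'' is a sanity test, not a proof, unless pushed to a Sturm-type bound, which you do not invoke. Because the $2$- and $3$-dissections commute, the tractable order is the reverse of yours: $3$-dissect first, exactly as in Theorem~\ref{tm1} with $\ell=1$, where \eqref{f2byf12} and \eqref{f12byf2} apply to first powers and every component stays a short eta-quotient combination (see \eqref{tsp3n1}--\eqref{tsp3n2}), and then extract $12n+7=3(4n+2)+1$ and $12n+11=3(4n+3)+2$ by two routine $2$-dissections of the kind used throughout Section~\ref{s3}. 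As written, your proposal is a plausible program whose decisive step is missing, not a proof.
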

	
	This article extends congruences from~\cite{adiga2018congruences,nadji2021,shivashankar2022congruences} using elementary $q$-series identities, thereby addressing the questions raised in~\cite{nadji2021}. In~\cite{shivashankar2022congruences}, the authors provided $2$-dissection formulas for $\overline{S_3}(n)$. We complement this with the following $3$-dissections for $\overline{S}_{3\ell}(n)$.
	
	\begin{thm} \label{tm1}For any positive integer $n$ and $\ell$ is any odd positive integer, we have
		\begin{align}
			&\sum_{n= 0}^{\infty } \overline{S}_{3\ell}(3n)q^n=\frac{f_2^4f_{4}^{} f_3^6f_\ell^2f_{4\ell}^{}}{f_1^8f_{6}^{} f_{12}^{} f_{2\ell}^3} - 4q\frac{f_2^4 f_3^3 f_{12}^2 f_\ell^2 f_{4\ell}^{}}{f_1^7 f_4^{} f_{6}^{}f_{2\ell}^3}, \label{tsp3n}\\
			&\sum_{n= 0}^{\infty } \overline{S}_{3\ell}(3n+1)q^n=
			2\frac{f_2^3 f_3^3 f_{6}^2 f_\ell^2f_{4\ell}^{} }{f_1^7 f_{2\ell}^3f_{12}^{}} 
			- 8q\frac{f_2^3 f_{6}^2 f_{12}^2 f_\ell^2f_{4\ell}^{}}{f_1^6 f_4^{} f_{2\ell}^3},\label{tsp3n1}\\
			&\sum_{n= 0}^{\infty } \overline{S}_{3\ell}(3n+2)q^n= 
			4\frac{f_2^2 f_{6}^5 f_\ell^2 f_{4\ell}^{}}{f_1^6 f_{2\ell}^3 f_{12}^{}} 
			- 2\frac{f_2^5 f_3^6 f_{12}^2 f_\ell^2 f_{4\ell}^{}}{f_1^8 f_{6}^4 f_4^{} f_{2\ell}^3}\label{tsp3n2}.
		\end{align}
	\end{thm}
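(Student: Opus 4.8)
The plan is to reduce everything to a single $3$-dissection of the ``core'' eta-quotient $G(q):=f_2^3/(f_1^2 f_4)$, which is independent of $\ell$. Writing $t=3\ell$ in the generating function \eqref{tsp} gives
\[
\sum_{n\ge0}\overline{S}_{3\ell}(n)q^n=\frac{f_2^3}{f_1^2 f_4}\cdot\frac{f_{3\ell}^2 f_{12\ell}}{f_{6\ell}^3}.
\]
Since $3\ell,6\ell,12\ell$ are all multiples of $3$ (and $t=3\ell$ is odd, so the object is a genuine $t$-Schur overpartition), the second factor $B(q):=f_{3\ell}^2 f_{12\ell}/f_{6\ell}^3$ is a power series in $q^3$; hence it does not interact with the residue classes and simply rides along. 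So the first thing I would do is record the $3$-dissection
\[
G(q)=\alpha_0(q^3)+q\,\alpha_1(q^3)+q^2\,\alpha_2(q^3),
\]
after which the three claimed identities follow by extracting the classes $q^{3n},q^{3n+1},q^{3n+2}$ from $G(q)B(q)$ and replacing $q^3$ by $q$ (so that $f_{3k}\mapsto f_k$, turning $B$ into $f_\ell^2 f_{4\ell}/f_{2\ell}^3$, the common factor visible in all three right-hand sides).

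To obtain the $3$-dissection of $G$, I would factor it as
\[
G(q)=\frac{f_2}{f_1^2}\cdot\frac{f_2^2}{f_4},
\]
and treat the two factors separately. The second factor is the lacunary theta series $f_2^2/f_4=\sum_{n}(-1)^n q^{2n^2}$, whose exponents satisfy $2n^2\equiv0$ or $2\pmod3$; thus its $3$-dissection has no residue-$1$ component, the residue-$0$ part being $\sum_m(-1)^m q^{18m^2}$ and the residue-$2$ part a single theta series. For the first factor I would invoke the known $3$-dissection of the overpartition generating function $f_2/f_1^2$. Writing each factor as $u_0+u_1+u_2$ and $v_0+v_2$ (indexed by residue $\bmod\,3$, with $v_1=0$), a Cauchy product gives the residue-$r$ part of $G$ as $\sum_{i+j\equiv r}u_i v_j$; because $v_1=0$ each of these sums collapses to exactly two terms, which is precisely the two-term shape of $\alpha_0,\alpha_1,\alpha_2$.

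The remaining, and main, task is to simplify these six products of theta/eta-quotients into the single eta-quotients stated in \eqref{tsp3n}--\eqref{tsp3n2}. This is where the work lies: each $u_iv_j$ must be rewritten using Jacobi-triple-product evaluations and standard eta-quotient manipulations until it matches the target, and one must also supply the explicit $3$-dissection of $f_2/f_1^2$ in eta-quotient form. I expect the $3$-dissection of $f_2/f_1^2$ and the consolidation of the residue-$2$ theta factor of $f_2^2/f_4$ to be the delicate points, since these are what introduce the moduli $3,6,12$ appearing in the answer. As a final check I would compare the power-series expansions of both sides of \eqref{tsp3n}--\eqref{tsp3n2} to high order, and note that the case $\ell=1$ must reproduce the $3$-dissection of $\overline{S}_3(n)$, giving an independent consistency test.
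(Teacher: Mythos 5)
Your proposal is correct and takes essentially the same route as the paper: the paper likewise factors the generating function as $\frac{f_2}{f_1^2}\cdot\frac{f_2^2}{f_4}\cdot\frac{f_{3\ell}^2 f_{12\ell}}{f_{6\ell}^3}$ (noting the last factor is a series in $q^3$), substitutes the known 3-dissection \eqref{f2byf12} of $f_2/f_1^2$ together with \eqref{f12byf2} under $q\mapsto q^2$ for $f_2^2/f_4$ (exactly the $\varphi(-q^2)$ dissection you describe, including the vanishing residue-$1$ component), and extracts the classes $q^{3n}$, $q^{3n+1}$, $q^{3n+2}$. The only cosmetic difference is that the paper quotes both dissections already in eta-quotient form, so the six two-term products match \eqref{tsp3n}--\eqref{tsp3n2} immediately and the Jacobi-triple-product consolidation you anticipate as the main remaining work is not needed.
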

	In Section~\ref{s9}, we establish several Ramanujan-like congruences modulo \(3,8,16,\) and~\(32\) for $\overline{S_9}(n)$. In Section~\ref{s3}, we first prove Theorem~\ref{tm1}, then derive congruences modulo \(3,4,8,12,16,24,\) and~\(32\) for $\overline{S_3}(n)$, and finally relate $\overline{S_3}(n)$ to $\overline{S_3^2}(n)$.
	
	
	\section{Preliminaries}
	\noindent We begin this section by introducing the Ramanujan’s general theta-function $f(a,b),$ defined as:
	\begin{align}
		f(a,b): = \sum_{n=-\infty}^{\infty} a^{\frac{n(n+1)}{2}} b^{\frac{n(n-1)}{2}}, \quad |ab| < 1.
	\end{align}
	The following definitions of theta-functions  $\varphi$, $\psi$ and $f$ are classical:
	\begin{eqnarray}
		\varphi(q)&:=&f(q,q)= \frac{f_2^5}{f_1^2f_4^2},\\
		\psi(q)&:=&f(q,q^3) =\frac{f_2^2}{f_1},\\
		f(-q)&:=&f(-q,-q^2)=f_1.
	\end{eqnarray}
	\begin{lma}
		Suppose $l \geq 1$, $m \geq 1$ are any integer, and $p$ is any prime. Then we have
		\begin{align}\label{bt}
			f^{p^{l-1}}_{pm}\equiv f_{m}^{p^{l}} \pmod {p^l}.
		\end{align}
	\end{lma}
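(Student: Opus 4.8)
The plan is to prove the congruence $f_{pm}^{p^{l-1}} \equiv f_m^{p^l} \pmod{p^l}$ by induction on $l$, exploiting the structure of the $q$-shifted factorial together with the freshman's dream over the $p$-adic integers. First I would recall that $f_m = \prod_{k\ge 1}(1-q^{mk})$, so $f_{pm} = \prod_{k\ge 1}(1-q^{pmk})$; the entire identity lives in the formal power series ring $\mathbb{Z}[[q]]$, and "$\equiv \pmod{p^l}$" is to be read coefficientwise.

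For the base case $l=1$, I would establish $f_{pm} \equiv f_m^p \pmod p$. The key elementary fact is that for any prime $p$ and any formal power series $A(q)\in\mathbb{Z}[[q]]$ one has $A(q)^p \equiv A(q^p) \pmod p$, which follows from the multinomial theorem since all the mixed multinomial coefficients are divisible by $p$ and by Fermat's little theorem on the surviving diagonal terms. Applying this with $A(q) = f_m = \prod_{k\ge1}(1-q^{mk})$ gives $f_m^p \equiv \prod_{k\ge1}(1-q^{mk})^p \equiv \prod_{k\ge1}(1-q^{pmk}) = f_{pm} \pmod p$, where in the middle step I apply the freshman's dream factor by factor (equivalently, once to the whole product). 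This settles $l=1$.

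For the inductive step, suppose the claim holds for some $l\ge1$, i.e. $f_{pm}^{p^{l-1}} = f_m^{p^l} + p^l\,G(q)$ for some $G(q)\in\mathbb{Z}[[q]]$. Raising both sides to the $p$-th power, I would expand $(f_m^{p^l} + p^l G)^p$ by the binomial theorem. The leading term is $f_m^{p^{l+1}}$, the final term is $p^{pl}G^p$, and each intermediate term $\binom{p}{j}(f_m^{p^l})^{p-j}(p^lG)^j$ for $1\le j\le p-1$ carries a factor $\binom{p}{j}p^{lj}$, which is divisible by $p^{1+lj}$; since $1+lj \ge 1+l$ and also $pl \ge l+1$ for every prime $p$ and $l\ge1$, every term beyond the first is divisible by $p^{l+1}$. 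Hence $f_{pm}^{p^l} \equiv f_m^{p^{l+1}} \pmod{p^{l+1}}$, which is exactly the statement at level $l+1$, completing the induction.

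The routine part is the binomial bookkeeping in the inductive step; the one place to be careful is the divisibility estimate, verifying that both the minimum intermediate exponent $1+lj$ and the top exponent $pl$ are at least $l+1$ for all $l\ge1$ and all primes $p\ge2$, so that nothing leaks below the modulus $p^{l+1}$. I expect no genuine obstacle here, since these are all clean inequalities, but that exponent check is the crux that makes the induction close cleanly.
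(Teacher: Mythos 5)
Your proof is correct, and it is worth noting that the paper itself states this lemma without any proof at all (it is a standard workhorse in the partition-congruence literature, typically quoted as folklore), so there is no argument in the paper to compare against; your write-up supplies exactly the canonical argument one would give. The two ingredients are sound: the base case via the Frobenius fact $A(q)^p \equiv A(q^p) \pmod p$ for $A \in \mathbb{Z}[[q]]$, applied to $A = f_m$ so that $f_m^p \equiv f_{pm} \pmod p$, and the inductive step via the binomial expansion of $\left(f_m^{p^l} + p^l G\right)^p$ with the divisibility bookkeeping $1 + lj \ge l+1$ for $1 \le j \le p-1$ (since $p \mid \binom{p}{j}$ there) and $pl \ge l+1$ for the top term. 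The only point where a referee might ask for one extra sentence is your parenthetical ``factor by factor'' application of the freshman's dream to an infinite product: strictly, one should say that $f_m$ is a $q$-adic limit of its finite partial products and that coefficientwise congruences mod $p$ pass to such limits (or simply invoke the Frobenius fact once for the full series $A = f_m$, as your ``equivalently'' already suggests). With that phrasing tightened, the proof is complete and closes cleanly at every level of the induction.
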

	\begin{lma} [{\cite[Entry~25]{berndt2012ramanujan}}] The following 2-dissections holds 
		\begin{equation}\label{f12}
			f_1^2 =\frac{f_2 f_8^5}{f_4^2 f_{16}^2}
			-2q\frac{f_2 f_{16}^2}{f_8},
		\end{equation}
		\begin{equation}
			\label{1byf12}
			\frac{1}{f_1^2} =\frac{f_8^5}{f_2^5 f_{16}^2}
			+2q\frac{f_4^2 f_{16}^2}{f_2^5 f_8}.
		\end{equation}
	\end{lma}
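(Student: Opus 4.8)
The plan is to reduce both identities to a single $2$-dissection of Ramanujan's theta-function $\varphi(q)$, which is elementary to obtain from its series definition. Writing $\varphi(q)=\sum_{n=-\infty}^{\infty}q^{n^2}$ and separating the sum according to the parity of $n$, the even terms $n=2m$ contribute $\sum_{m}q^{4m^2}=\varphi(q^4)$, while the odd terms $n=2m+1$ contribute $q\sum_{m}q^{4m(m+1)}$ since $(2m+1)^2=1+4m(m+1)$. Because $m(m+1)$ is invariant under $m\mapsto -m-1$, the latter sum symmetrizes to $2\sum_{m\ge0}q^{4m(m+1)}=2\psi(q^8)$. Hence
\[
\varphi(q)=\varphi(q^4)+2q\,\psi(q^8),
\]
and replacing $q$ by $-q$ gives $\varphi(-q)=\varphi(q^4)-2q\,\psi(q^8)$, the odd-power contribution changing sign while $\varphi(q^4)$ is unaffected.

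Next I would translate these into eta-quotients using the product forms supplied in the excerpt. From $\varphi(q)=f_2^5/(f_1^2f_4^2)$ one gets, after $q\mapsto q^4$, the identity $\varphi(q^4)=f_8^5/(f_4^2f_{16}^2)$; and from $\psi(q)=f_2^2/f_1$ one gets, after $q\mapsto q^8$, the identity $\psi(q^8)=f_{16}^2/f_8$. Substituting these yields
\[
\varphi(-q)=\frac{f_8^5}{f_4^2 f_{16}^2}-2q\,\frac{f_{16}^2}{f_8}.
\]

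To obtain \eqref{f12} I would use the companion classical product $\varphi(-q)=f_1^2/f_2$ (which follows from the same product expansion, e.g.\ from $\varphi(-q)=(q;q^2)_\infty^2(q^2;q^2)_\infty$ together with $(q;q^2)_\infty=f_1/f_2$). Thus $f_1^2=f_2\,\varphi(-q)$, and multiplying the displayed dissection of $\varphi(-q)$ by $f_2$ gives exactly the right-hand side of \eqref{f12}. For \eqref{1byf12} I would instead invert the product form directly: from $\varphi(q)=f_2^5/(f_1^2f_4^2)$ one has $1/f_1^2=\varphi(q)\,f_4^2/f_2^5$, and substituting the $2$-dissection $\varphi(q)=f_8^5/(f_4^2f_{16}^2)+2q\,f_{16}^2/f_8$ and simplifying the resulting eta-quotients reproduces \eqref{1byf12}.

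The only genuinely delicate step is the dissection of $\varphi(q)$ itself: one must complete the square correctly in the odd-index terms and justify the symmetrization $\sum_{m\in\mathbb{Z}}q^{4m(m+1)}=2\psi(q^8)$ from the one-sided definition of $\psi$. After that, everything is bookkeeping with the product formulas, including the sign changes under $q\mapsto -q$, which affect only the terms carrying an odd power of $q$.
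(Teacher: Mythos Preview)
Your proof is correct. The paper itself does not supply a proof of this lemma; it merely cites Entry~25 of \cite{berndt2012ramanujan} and moves on. Your argument is precisely the classical one underlying that entry: the series $2$-dissection $\varphi(q)=\varphi(q^4)+2q\,\psi(q^8)$ together with the product representations $\varphi(-q)=f_1^2/f_2$, $\varphi(q)=f_2^5/(f_1^2f_4^2)$ and $\psi(q)=f_2^2/f_1$. The bookkeeping you outline---the sign change under $q\mapsto -q$, the rescalings $q\mapsto q^4,q^8$, and the two multiplications by $f_2$ and by $f_4^2/f_2^5$---all check, and the symmetrization $\sum_{m\in\mathbb Z}q^{4m(m+1)}=2\psi(q^8)$ via $m\mapsto -m-1$ is handled correctly.
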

	
	\begin{lma}[{\cite{xia2012some}}]The following 2-dissection holds:
		\begin{align}
			\frac{f_9}{f_1} &= \frac{f_{12}^3f_{18}}{f_2^2f_{6}f_{36}}+q\frac{f_4^2f_6f_{36}}{f_{2}^3f_{12}}.\label{f9byf1}
		\end{align}
	\end{lma}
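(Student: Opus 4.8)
The plan is to realize the claimed formula as the even/odd splitting of \(f_9/f_1\) and to produce that splitting constructively from one classical dissection. First note that the right-hand side is a genuine \(2\)-dissection: every \(f_k\) occurring there has even index, so each of the two theta quotients \(\frac{f_{12}^3f_{18}}{f_2^2f_6f_{36}}\) and \(\frac{f_4^2f_6f_{36}}{f_2^3f_{12}}\) is a power series in \(q^2\). Thus the identity asserts exactly that the first quotient is the even part of \(f_9/f_1\) and that \(q\) times the second is its odd part. As an organizing check one may compute the image under \(q\mapsto -q\): since \((-q;-q)_\infty=f_2^3/(f_1f_4)\) and likewise with \(q\) replaced by \(q^9\), one gets \(\left.\frac{f_9}{f_1}\right|_{q\to -q}=\frac{f_1f_4f_{18}^3}{f_2^3f_9f_{36}}\), so the even part equals \(\tfrac12\big(\frac{f_9}{f_1}+\frac{f_1f_4f_{18}^3}{f_2^3f_9f_{36}}\big)\); this reframes the goal as a single eta-quotient identity but does not yet prove it.

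To obtain the splitting constructively I would start from the residue-mod-\(3\) dissection of \(\psi(q)=\sum_{n\ge0}q^{n(n+1)/2}\), namely \(\psi(q)=f(q^3,q^6)+q\,\psi(q^9)\), which in eta form reads \(\frac{f_2^2}{f_1}=\frac{f_6f_9^2}{f_3f_{18}}+q\frac{f_{18}^2}{f_9}\). Dividing by \(f_2^2\) and multiplying by \(f_9\) gives the intermediate identity
\[
\frac{f_9}{f_1}=\frac{f_6f_9^3}{f_2^2f_3f_{18}}+q\frac{f_{18}^2}{f_2^2}.
\]
The second summand already sits in the odd slot (it is \(q\) times a series in \(q^2\)), so it contributes only to the odd part; all the remaining work is to \(2\)-dissect the first summand.

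Because \(\frac{f_6f_9^3}{f_3f_{18}}\) is a power series in \(q^3\) while \(1/f_2^2\) is a series in \(q^2\), and \(q^{3k}\) has the parity of \(k\), the even and odd parts of \(\frac{f_6f_9^3}{f_2^2f_3f_{18}}\) come precisely from the even and odd parts, in the base variable \(x=q^3\), of the weight-one eta quotient \(H(x)=\frac{(x^2;x^2)_\infty(x^3;x^3)_\infty^3}{(x;x)_\infty(x^6;x^6)_\infty}\). The target forces the even part of \(H\) to collapse to \(\frac{(x^4;x^4)_\infty^3(x^6;x^6)_\infty}{(x^2;x^2)_\infty(x^{12};x^{12})_\infty}\): setting \(x=q^3\) this is \(\frac{f_{12}^3f_{18}}{f_6f_{36}}\), and dividing by \(f_2^2\) reproduces the first quotient on the right-hand side, while the odd part of \(H\), placed over \(f_2^2\) and added to \(qf_{18}^2/f_2^2\), must collapse to \(q\,\frac{f_4^2f_6f_{36}}{f_2^3f_{12}}\). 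Establishing this \(2\)-dissection of \(H\) is the crux and the main obstacle: there is no formal reason the two halves of \(H\) should again be single eta quotients, and checking that they are the asserted ones is where the real computation lives.

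To finish this step I see two routes. Elementarily, one dissects \(H\) by feeding a standard \(2\)-dissection of the odd-base factor \(1/(x;x)_\infty\) into \(H(x)\) and simplifying, or equivalently one matches the two pieces against the \(q\mapsto -q\) image computed above. More robustly, one clears denominators so that the target becomes an identity between holomorphic eta quotients of a common weight on \(\Gamma_0(36)\) (all indices that appear divide \(36\), so the level is \(36\)), verifies nonnegative order at every cusp, and then invokes Sturm's bound to reduce the entire identity to a short, finite comparison of \(q\)-expansions.
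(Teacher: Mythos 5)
The paper offers no proof of this lemma at all: it is quoted verbatim from Xia and Yao \cite{xia2012some}, so there is no internal argument to compare yours against. Your reduction is correct as far as it goes: passing from the $3$-dissection of $\psi(q)$ (equation \eqref{f22byf1} of this paper) to
\[
\frac{f_9}{f_1}=\frac{f_6f_9^3}{f_2^2f_3f_{18}}+q\,\frac{f_{18}^2}{f_2^2}
\]
is valid, the parity bookkeeping (that $q^{3k}$ has the parity of $k$ while $1/f_2^2$ is even) is right, and your $q\mapsto-q$ image of $f_9/f_1$ is computed correctly. But the proposal stops exactly where you yourself locate the crux: the $2$-dissection of $H(x)=\frac{(x^2;x^2)_\infty(x^3;x^3)_\infty^3}{(x;x)_\infty(x^6;x^6)_\infty}$ is asserted, not proved, and neither of your two finishing routes is executed. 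Moreover your elementary route, as described --- feeding a $2$-dissection of $1/(x;x)_\infty$ into $H$ --- would still leave the odd-base factor $(x^3;x^3)_\infty^3$ undissected, so as written there is a genuine gap, and the entire content of the lemma lives in it.

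The gap closes in two lines using identities already quoted in this paper, and your worry that ``there is no formal reason the two halves of $H$ should again be single eta quotients'' is unfounded. Write $H=\frac{(x^2;x^2)_\infty}{(x^6;x^6)_\infty}\cdot\frac{(x^3;x^3)_\infty^3}{(x;x)_\infty}$ and apply the Hirschhorn--Garvan--Borwein dissection \eqref{f33byf1} to the second factor; since the cofactor is even in $x$, this gives at once
\[
H(x)=\frac{(x^4;x^4)_\infty^3(x^6;x^6)_\infty}{(x^2;x^2)_\infty(x^{12};x^{12})_\infty}
+x\,\frac{(x^2;x^2)_\infty(x^{12};x^{12})_\infty^3}{(x^4;x^4)_\infty(x^6;x^6)_\infty},
\]
whose even half is exactly the quotient your target forced. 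For the odd half, set $x=q^3$, add the summand $q f_{18}^2/f_2^2$, and what remains to check is
\[
f_{18}^2+q^2\,\frac{f_6f_{36}^3}{f_{12}f_{18}}=\frac{f_4^2f_6f_{36}}{f_2f_{12}},
\]
which is precisely \eqref{f22byf1} with $q$ replaced by $q^2$ (recall $\psi(q^2)=f_4^2/f_2$), multiplied through by $\frac{f_6f_{36}}{f_{12}}$. So the same $\psi$-dissection you started from also finishes the proof; the Sturm-bound verification on $\Gamma_0(36)$ you propose as a fallback is sound in principle but unnecessary here.
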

	\begin{lma}[{\cite{hirschhorn1993cubic}}]The following 2-dissection hold:
		\begin{align}
			\frac{f_3^3}{f_1} &= \frac{f_4^3f_6^2}{f_2^2f_{12}}+q\frac{f_{12}^3}{f_4},\label{f33byf1}\\
			\frac{f_3}{f_1^3} &= \frac{f_4^6f_6^3}{f_2^9f_{12}^2}+3q\frac{f_4^2f_6f_{12}^2}{f_2^7}.\label{f3byf13}
		\end{align}
	\end{lma}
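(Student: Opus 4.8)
The statement packages two $2$-dissection identities, and the plan is to prove each by splitting the left-hand side into its even and odd parts in $q$ and matching these against the two summands on the right. The key structural observation is that every factor on the right of \eqref{f33byf1} and \eqref{f3byf13} carries an even subscript: the first summand in each line is therefore a power series in $q^2$, and the second summand is $q$ times a power series in $q^2$. Consequently it suffices to show that the even part of each left-hand side coincides with its first summand and that the odd part coincides with its second summand.

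To carry this out I would extract the even and odd parts through the substitution $q \mapsto -q$, using the elementary negation formulas (where $f_k(-q)$ means $\big((-q)^k;(-q)^k\big)_\infty$)
\begin{align}
f_1(-q) = \frac{f_2^3}{f_1 f_4}, \qquad f_3(-q) = \frac{f_6^3}{f_3 f_{12}}, \qquad f_{2k}(-q) = f_{2k},
\end{align}
so that an odd-indexed $f_k$ transforms by $f_k \mapsto f_{2k}^3/(f_k f_{4k})$ while an even-indexed $f_k$ is fixed. Writing $G(q)$ for either left-hand side, its even and odd parts are $\tfrac12\big(G(q)+G(-q)\big)$ and $\tfrac12\big(G(q)-G(-q)\big)$. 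For $G=f_3^3/f_1$ this gives $G(-q)=\frac{f_1 f_4 f_6^9}{f_2^3 f_3^3 f_{12}^3}$, and for $G=f_3/f_1^3$ it gives $G(-q)=\frac{f_1^3 f_4^3 f_6^3}{f_2^9 f_3 f_{12}}$. Substituting and comparing with the claimed summands, each dissection reduces to a pair of identities among eta-quotients supported on the divisors of $12$; for instance, the even part of \eqref{f33byf1} becomes
\begin{align}
\frac{f_3^3}{f_1}+\frac{f_1 f_4 f_6^9}{f_2^3 f_3^3 f_{12}^3}=2\,\frac{f_4^3 f_6^2}{f_2^2 f_{12}},
\end{align}
with an entirely analogous identity governing the odd part and the second dissection.

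The hard part will be discharging these residual eta-quotient identities rather than the bookkeeping that produces them. The cleanest fully rigorous route is to clear denominators and regard each side as a modular form on $\Gamma_0(12)$ (restoring the fractional $q$-powers via $f_k=q^{-k/24}\eta(k\tau)$); once both sides are holomorphic forms of the same weight and character, the valence formula reduces equality to checking finitely many Fourier coefficients up to the Sturm bound, which is a finite computation. If one prefers to remain within elementary $q$-series, the two left-hand sides are, up to normalization, Borwein's cubic theta functions, since $c(q)=3f_3^3/f_1$ and $b(q)=f_1^3/f_3$, and one may instead invoke their established $2$-dissections or reduce everything to standard theta-function relations for $\varphi$ and $\psi$. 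Since the lemma is quoted verbatim from Hirschhorn~\cite{hirschhorn1993cubic}, in the paper itself one simply cites that reference; the outline above is how I would reconstruct the proof directly.
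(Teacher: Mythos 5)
Your proposal is sound but follows a genuinely different route from the paper. The paper does not dissect anything afresh: it quotes \eqref{f33byf1} directly from Hirschhorn--Garvan--Borwein, and obtains \eqref{f3byf13} in one line from the companion identity \eqref{gr} for $f_1^3/f_3$ proved in the same reference, by replacing $q$ with $-q$ and invoking exactly the negation formula $(-q;-q)_\infty=f_2^3/(f_1f_4)$ that you also use. Since $q\mapsto -q$ sends $f_1^3/f_3$ to $f_2^9f_3f_{12}/(f_1^3f_4^3f_6^3)$, which is $f_3/f_1^3$ times a quotient of even-indexed $f_k$'s, dividing through yields \eqref{f3byf13} with no residual identity left to check. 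Your even/odd splitting, by contrast, merely restates each dissection as an equivalent pair of eta-quotient identities (your computations of $G(-q)$ are correct), and all of the actual content is deferred to the valence-formula/Sturm-bound step, which is rigorous in principle but heavy machinery, and is left as an unperformed finite computation. What each buys: your route is self-contained and applies mechanically to any conjectured eta-quotient dissection, while the paper's route buys a two-line elementary proof by exploiting the known companion dissection---essentially your suggested fallback via the cubic theta functions, since $b(q)=f_1^3/f_3$. Two small corrections to that fallback: $c(q)=3q^{1/3}f_3^3/f_1$, not $3f_3^3/f_1$, and the second left-hand side is $1/b(q)$, so one needs the $q\mapsto-q$ twist rather than a direct dissection of $b$ itself. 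Finally, a caveat worth knowing: the paper's displayed \eqref{gr} contains a typo---the denominator of its second term should be $f_4f_6^2$, not $f_4^2f_6^2$, as one checks by reversing the $q\mapsto-q$ step from \eqref{f3byf13}---so an independent verification along your lines would in fact catch this, which is a point in favor of your approach.
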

	\begin{proof}
		Note that Hirschhorn, Garvan and Borwein \cite{hirschhorn1993cubic} proved \eqref{f33byf1}. In the same paper, authors also obtained 
		\begin{equation}\label{gr}
			\frac{f_1^3}{f_3} = \frac{f_4^3}{f_{12}} - 3q\,\frac{f_2^2 f_{12}^3}{f_4^2 f_6^2}.
			\tag{2.11}
		\end{equation}
		Replacing $q$ by $-q$ in \eqref{gr} and using the fact that $(-q;-q)_{\infty} = \dfrac{f_2^3}{f_1 f_4^4}$, we obtain \eqref{f3byf13}.
		
	\end{proof}
		\begin{lma} [{\cite{xia2013analogues}}] The following 2-dissection hold:
			\begin{align}
				\frac{f_3}{f_1} &=
				\frac{f_4 f_6 f_{16}f_{24}^{2}}{f_2^{2} f_8 f_{12}f_{48}}
				+ q \frac{f_6 f_8^2 f_{48}}{f_2^2f_{16}f_{24}},\label{f3f1}\\
				\frac{f_3^2}{f_1^2}
				&=\frac{f_4^4 f_6 f_{12}^2}{f_2^5 f_8 f_{24}}
				+2q\frac{f_4 f_6^2 f_8 f_{24}}{f_2^4 f_{12}},\label{f3.2_f1.2}\\            
				\frac{f_3^4}{f_1^4}
				&=\frac{f_4^8 f_6^2 f_{12}^{4}}{f_2^{10} f_8^2 f_{24}^{2}}
				+ 4q \frac{f_4^5 f_6^3 f_{12}}{f_2^9}
				+ 4q^2 \frac{f_4^2 f_6^4 f_8^2 f_{24}^{2}}{f_2^8 f_{12}^{2}},\label{f34f14}\\
				\frac{1}{f_1 f_3} &= 
				\frac{f_8^2 f_{12}^5}{f_2^2 f_4 f_6^4 f_{24}^2} 
				+ q \frac{f_4^5 f_{24}^2}{f_2^4 f_6^2 f_8^2 f_{12}},\label{1byf1f3}\end{align}
			\begin{align}
				\frac{1}{f_1^2 f_3^2} &= 
				\frac{f_8^5 f_{24}^5}{f_2^5 f_6^5 f_{16}^2 f_{48}^2} 
				+ 2q \frac{f_4^4 f_{12}^4}{f_2^6 f_6^6}+4q^4\frac{f_4^2f_{12}^2 f_{16}^4f_{48}^2}{f_2^5 f_6^5 f_8f_{24}}.\label{1byf12f32}
			\end{align}
		\end{lma}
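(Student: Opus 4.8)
The plan is to reduce every quotient to Ramanujan's theta functions $\varphi$ and $\psi$ and then 2-dissect by the standard rationalisation device. Writing $\varphi(-q)=f_1^2/f_2$ and $\psi(q)=f_2^2/f_1$, one gets $f_3^2/f_1^2=f_6\,\varphi(-q^3)/\bigl(f_2\,\varphi(-q)\bigr)$ and $1/(f_1f_3)=\psi(q)\psi(q^3)/(f_2^2f_6^2)$, while $f_3/f_1$ is handled in the same spirit. The engine is the elementary 2-dissection of the general theta function, $f(a,b)=f(a^3b,ab^3)+a\,f(a^5b^3,a^{-1}b)$, which specialises to $\varphi(q)=\varphi(q^4)+2q\psi(q^8)$ and $\psi(q)=f(q^6,q^{10})+q\,f(q^2,q^{14})$, together with the classical relation $\varphi(q)\varphi(-q)=\varphi(-q^2)^2$. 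The last identity linearises the denominator via $1/\varphi(-q)=\varphi(q)/\varphi(-q^2)^2$, exactly as in the derivation of \eqref{1byf12}, after which each numerator becomes a finite combination of theta products whose even and odd parts in $q$ can be read off directly.

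I would first settle the two base identities \eqref{f3f1} and \eqref{1byf1f3} by this method, since the remaining three then follow almost formally. For \eqref{f3.2_f1.2}, squaring \eqref{f3f1} and writing $f_3/f_1=A+qB$ gives $f_3^2/f_1^2=A^2+2q\,AB+q^2B^2$; a direct check shows $AB$ equals the stated coefficient of $q$, so the odd part is automatic, and only the even part $A^2+q^2B^2$ must be collapsed to $f_4^4f_6f_{12}^2/(f_2^5f_8f_{24})$. Squaring \eqref{f3.2_f1.2} then yields \eqref{f34f14} by pure algebra: with $f_3^2/f_1^2=C+2qD$ one has $f_3^4/f_1^4=C^2+4q\,CD+4q^2D^2$, and one verifies that $C^2$, $CD$, $D^2$ reproduce the three displayed terms verbatim, with no further identity needed. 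Likewise, squaring \eqref{1byf1f3}$=E+qF$ produces the middle term of \eqref{1byf12f32} at once from $2EF$, while the even part $E^2+q^2F^2$ must be 2-dissected one more time, its $q^0$ and $q^4$ components supplying the modulus-$16$ and modulus-$48$ factors of the first and third displayed terms.

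The main obstacle is precisely the collapse of these even and odd parts into the single theta products shown. After rationalisation, each such part is a sum of two products of theta functions---for instance, the odd part of \eqref{f3.2_f1.2} presents $\varphi(q^{12})\psi(q^8)-q^2\psi(q^{24})\varphi(q^4)$, which must be identified with $f_2f_6f_8f_{24}/(f_4f_{12})$. These reductions are Schr\"oter-type (theta addition) identities, and proving them is where the genuine work lies; they can be obtained either by reapplying the general dissection $f(a,b)=f(a^3b,ab^3)+a\,f(a^5b^3,a^{-1}b)$ to one factor and matching terms, or from Ramanujan's addition formula for $f(a,b)$. Since all five dissections are already recorded by Xia~\cite{xia2013analogues}, one may alternatively just invoke that reference; the route above is the self-contained derivation I would present, with the Schr\"oter-type reductions isolated as the single computational ingredient.
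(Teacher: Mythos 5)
The paper offers no proof of this lemma at all: the five dissections are imported verbatim from Xia and Yao \cite{xia2013analogues}, so your self-contained outline is necessarily a different route --- and it is essentially the route Xia--Yao themselves take (rationalisation into $\varphi$ and $\psi$, followed by Schr\"oter-type theta reductions). Your structural reductions are correct and verifiable by pure algebra on eta-quotients: writing \eqref{f3f1} as $A+qB$, one indeed has $AB=f_4f_6^2f_8f_{24}/(f_2^4f_{12})$, so the odd part of \eqref{f3.2_f1.2} comes for free; \eqref{f34f14} is literally $(C+2qD)^2=C^2+4qCD+4q^2D^2$ with $C^2$, $CD$, $D^2$ matching the three displayed quotients exactly, so it needs no new input; and $2EF$ reproduces the middle term of \eqref{1byf12f32}. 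This organisation genuinely reduces the lemma to the two base dissections \eqref{f3f1} and \eqref{1byf1f3} plus two even-part collapses, which is a real economy over proving five identities separately, and it supplies more than the paper does.

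One inaccuracy should be repaired: your description of the remaining even part of \eqref{1byf12f32}. You say $E^2+q^2F^2$ ``must be 2-dissected one more time, its $q^0$ and $q^4$ components supplying'' the two even terms. But $G=f_8^5f_{24}^5/(f_2^5f_6^5f_{16}^2f_{48}^2)$ and the term $4q^4K$ are not the $q^{4n}$- and $q^{4n+2}$-parts of anything: viewed in the variable $Q=q^2$, both eta-quotients acquire odd-index factors ($F_1$ and $F_3$ after halving the subscripts) and hence contain all powers of $Q$. So $E^2+q^2F^2=G+4q^4K$ is a genuine three-term theta identity, of exactly the same Schr\"oter type as the collapse $A^2+q^2B^2=C$ needed for \eqref{f3.2_f1.2}, and not a mechanical parity extraction. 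Since you have already isolated such collapses as the computational core --- and your sample reduction $\varphi(q^{12})\psi(q^8)-q^2\psi(q^{24})\varphi(q^4)=f_2f_6f_8f_{24}/(f_4f_{12})$ is consistent with a series check --- this is a mislabelling rather than a fatal gap; but as phrased it suggests a dissection step that would not go through, so restate it as an identity to be proved by the same addition-formula technique you invoke for the other collapses (or, as the paper does, fall back on citing \cite{xia2013analogues}).
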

		
		\begin{lma}[{\cite{mdb}}] The following 3-dissection hold:
			\begin{align}
				&\frac{f_2}{f_1^2} = \frac{f_6^4 f_9^6}{f_3^8 f_{18}^3} + 2q \frac{f_6^3 f_9^3}{f_3^7} + 4q^2 \frac{f_6^2 f_{18}^3}{f_3^6},\label{f2byf12}\\
				&\frac{f_1^2}{f_2} = \frac{f_9^2}{f_{18}} - 2q \frac{f_3 f_{18}^2}{f_6 f_9}.\label{f12byf2}
		\end{align}\end{lma}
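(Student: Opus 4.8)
The plan is to treat both identities as the two halves of the $3$-dissection of $\varphi(-q)=f_1^2/f_2$: first dissect $\varphi(-q)$ itself to obtain \eqref{f12byf2}, then invert to obtain \eqref{f2byf12}.

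For \eqref{f12byf2}, I would start from $\varphi(q)=\sum_{n=-\infty}^{\infty}q^{n^2}$ and split the summation index by residue mod $3$. Since $n^2\equiv 0$ or $1\pmod 3$, the class $n\equiv 0$ contributes $\varphi(q^9)$ while the classes $n\equiv\pm1$ combine into $2q\,f(q^3,q^{15})$, giving $\varphi(q)=\varphi(q^9)+2q\,f(q^3,q^{15})$. Replacing $q$ by $-q$ and invoking the Jacobi triple product, I would identify $\varphi(-q^9)=f_9^2/f_{18}$ and $f(-q^3,-q^{15})=(q^3;q^{18})_\infty(q^{15};q^{18})_\infty(q^{18};q^{18})_\infty$, which regroups modulo $18$ to $f_3f_{18}^2/(f_6f_9)$. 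This yields exactly \eqref{f12byf2}.

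For \eqref{f2byf12}, write \eqref{f12byf2} as $\varphi(-q)=A-2qB$ with $A=f_9^2/f_{18}$ and $B=f_3f_{18}^2/(f_6f_9)$, both of which are power series in $q^3$. Rationalizing the reciprocal gives
\[
\frac{f_2}{f_1^2}=\frac{1}{A-2qB}=\frac{A^2+2qAB+4q^2B^2}{A^3-8q^3B^3},
\]
whose denominator is again a series in $q^3$; hence the three residue components modulo $3$ are read off directly from $A^2$, $2qAB$, and $4q^2B^2$. It then remains only to reduce the denominator to a single eta-quotient, i.e. to prove the auxiliary identity $A^3-8q^3B^3=f_3^8f_{18}/(f_6^4f_9^2)$, after which dividing $A^2$, $AB$, $B^2$ by it reproduces the three terms of \eqref{f2byf12}.

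The main obstacle is this auxiliary identity, equivalently (after $q^3\mapsto q$) $\dfrac{f_3^6}{f_6^3}-8q\dfrac{f_1^3f_6^6}{f_2^3f_3^3}=\dfrac{f_1^8f_6}{f_2^4f_3^2}$, which is a genuine theta-function identity rather than a formal consequence of \eqref{f12byf2}. Since $A$ and $B$ are invariant under $q\mapsto\omega q$ for a primitive cube root of unity $\omega$, one has $A^3-8q^3B^3=\varphi(-q)\varphi(-\omega q)\varphi(-\omega^2 q)$, and I would evaluate this cubic product through the cubic theta functions of the Borweins; alternatively one can perform a $2$-dissection of each side and match term by term. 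A numerical check through order $q^6$ gives $1-8q^3+24q^6-\cdots$ on both sides, which makes me confident the identity holds and isolates it as the single computational step requiring care.
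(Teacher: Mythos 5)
The paper itself offers no proof of this lemma: it is quoted verbatim from Hirschhorn's \emph{The Power of $q$} \cite{mdb}, with only the remark that the two identities are the $3$-dissections of $\varphi(-q)$ and $1/\varphi(-q)$. So your proposal should be judged as a proof of the cited result, and in structure it is exactly the standard one. Your derivation of \eqref{f12byf2} is complete and correct: the split of $\varphi(q)=\sum q^{n^2}$ by residues of $n$ modulo $3$ gives $\varphi(q)=\varphi(q^9)+2qf(q^3,q^{15})$, and after $q\mapsto -q$ the triple product evaluation $f(-q^3,-q^{15})=(q^3;q^{18})_\infty(q^{15};q^{18})_\infty(q^{18};q^{18})_\infty=f_3f_{18}^2/(f_6f_9)$ checks out. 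The rationalization step for \eqref{f2byf12} is also correct: with $A=f_9^2/f_{18}$ and $B=f_3f_{18}^2/(f_6f_9)$, the three eta-quotients in \eqref{f2byf12} are precisely $A^2/D$, $AB/D$, $B^2/D$ with $D=f_3^8f_{18}/(f_6^4f_9^2)$, so everything correctly reduces to your auxiliary identity $A^3-8q^3B^3=D$.

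The one genuine gap is that this auxiliary identity is never proved: you offer a numerical check to order $q^6$ and propose either Borwein cubic theta functions or a term-by-term $2$-dissection, both of which are far heavier than needed. You in fact already wrote down the key observation — that $A^3-8q^3B^3=\varphi(-q)\varphi(-\omega q)\varphi(-\omega^2 q)$ since $A,B$ are series in $q^3$ — and from there the identity is a one-line elementary computation rather than a theta-function problem. Since $(1-q^n)(1-\omega^n q^n)(1-\omega^{2n}q^n)$ equals $1-q^{3n}$ when $3\nmid n$ and $(1-q^n)^3$ when $3\mid n$, one gets
\[
\prod_{j=0}^{2}\bigl(\omega^{j}q;\omega^{j}q\bigr)_\infty=\frac{f_3}{f_9}\cdot f_3^3=\frac{f_3^4}{f_9},
\]
and the same formula with $q$ replaced by $q^2$ gives $\prod_{j=0}^{2}(\omega^{2j}q^{2};\omega^{2j}q^{2})_\infty=f_6^4/f_{18}$. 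Hence, using $\varphi(-q)=f_1^2/f_2$,
\[
\varphi(-q)\,\varphi(-\omega q)\,\varphi(-\omega^{2}q)
=\frac{\bigl(f_3^4/f_9\bigr)^{2}}{f_6^4/f_{18}}
=\frac{f_3^8 f_{18}}{f_6^4 f_9^2}=D,
\]
which is exactly your missing identity (equivalently, $\varphi(-q)\varphi(-\omega q)\varphi(-\omega^2 q)=\varphi^4(-q^3)/\varphi(-q^9)$). With this inserted, your argument is a complete and correct proof, essentially identical to the one in the source the paper cites; the numerical verification and the appeal to cubic theta functions can both be dropped.
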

		\noindent Note that \eqref{f2byf12} is equivalent to the 3-dissection of $1/\varphi(-q)$ and \eqref{f12byf2} is equivalent to the 3-dissection of $\varphi(-q)$.
		\begin{lma}[{ \cite{hirschhorn2014congruence}}] The following 3-dissection holds:
			\begin{align}
				f_1 f_2 = \frac{f_6 f_9^4}{f_3 f_{18}^2} - q f_9 f_{18} - 2q^2 \frac{f_3 f_{18}^4}{f_6 f_9^2}.\label{f1f2}
			\end{align}
		\end{lma}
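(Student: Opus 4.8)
The plan is to factor the product as $f_1 f_2 = \dfrac{f_1^2}{f_2}\cdot\dfrac{f_2^2}{f_1} = \varphi(-q)\,\psi(q)$, where $\varphi(-q)=f_1^2/f_2$ and $\psi(q)=f_2^2/f_1$, and then to multiply the $3$-dissections of the two factors. One of these is already available: equation \eqref{f12byf2} is precisely the $3$-dissection of $\varphi(-q)=f_1^2/f_2$, namely $\varphi(-q)=\dfrac{f_9^2}{f_{18}}-2q\,\dfrac{f_3 f_{18}^2}{f_6 f_9}$. Note that each summand is $q^j$ (for $j=0,1$) times a power series in $q^3$, so this exhibits $\varphi(-q)$ as its genuine $3$-dissection, the residue class $2\pmod 3$ being empty.

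The only real work is the companion $3$-dissection of $\psi(q)$. Since $\psi(q)=\sum_{n\ge0}q^{n(n+1)/2}$ and the triangular numbers $T_n=n(n+1)/2$ satisfy $T_n\equiv 0,1\pmod 3$ according as $n\equiv 0,2$ or $n\equiv 1\pmod 3$, the classical splitting by residue of $n$ gives $\psi(q)=f(q^3,q^6)+q\,\psi(q^9)$. I would then convert the two pieces to eta-quotients: $\psi(q^9)=f_{18}^2/f_9$ directly, while the Jacobi triple product gives $f(q^3,q^6)=(-q^3;q^9)_\infty(-q^6;q^9)_\infty(q^9;q^9)_\infty$, and combining $(-q^3;q^9)_\infty(-q^6;q^9)_\infty=(-q^3;q^3)_\infty/(-q^9;q^9)_\infty=\dfrac{f_6/f_3}{f_{18}/f_9}$ yields $f(q^3,q^6)=\dfrac{f_6 f_9^2}{f_3 f_{18}}$. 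Hence $\psi(q)=\dfrac{f_6 f_9^2}{f_3 f_{18}}+q\,\dfrac{f_{18}^2}{f_9}$.

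With both dissections in hand, the identity follows by a direct multiplication of the two binomials. Because each factor is (a series in $q^3$) plus $q$ times (a series in $q^3$), the four resulting products fall automatically into the residue classes $0,1,2\pmod 3$: the product of the two leading terms gives the $q^0$-term $\dfrac{f_6 f_9^4}{f_3 f_{18}^2}$, the product of the two trailing terms gives the $q^2$-term $-2q^2\,\dfrac{f_3 f_{18}^4}{f_6 f_9^2}$, and the two cross terms both land in the class $1\pmod 3$, where $q f_9 f_{18}-2q f_9 f_{18}=-q f_9 f_{18}$. This reproduces \eqref{f1f2} exactly. The main (and only mildly technical) obstacle is the derivation of the $\psi(q)$ dissection, in particular the product manipulation identifying $f(q^3,q^6)$ with $f_6 f_9^2/(f_3 f_{18})$; once that is settled, no collection of terms is needed and the computation is entirely mechanical.
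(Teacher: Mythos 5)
Your proposal is correct, and every step checks out: the factorization $f_1f_2=\dfrac{f_1^2}{f_2}\cdot\dfrac{f_2^2}{f_1}=\varphi(-q)\,\psi(q)$ is valid, your derivation of $\psi(q)=\dfrac{f_6f_9^2}{f_3f_{18}}+q\,\dfrac{f_{18}^2}{f_9}$ via the triangular-number splitting and the triple product is sound, and the four-term multiplication does collect exactly to $\dfrac{f_6f_9^4}{f_3f_{18}^2}-qf_9f_{18}-2q^2\dfrac{f_3f_{18}^4}{f_6f_9^2}$, with the cross terms $qf_9f_{18}-2qf_9f_{18}$ giving the middle term. Note, however, that the paper itself offers no proof to compare against: the identity \eqref{f1f2} is simply quoted from Hirschhorn and Sellers, so your argument is a genuine addition rather than a variant of anything in the text. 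It is worth observing that both of your ingredients are already recorded in the paper — \eqref{f12byf2} is stated there and explicitly identified as the $3$-dissection of $\varphi(-q)$, and your $\psi(q)$ dissection is precisely \eqref{f22byf1}, the Corollary to Entry~31 cited from Berndt — so in the context of this paper you could skip the only technical portion of your write-up (the triple-product evaluation of $f(q^3,q^6)$) and reduce the whole proof to the one-line multiplication of \eqref{f12byf2} and \eqref{f22byf1}. Your self-contained derivation buys independence from Berndt's Entry~31, at the cost of redoing a standard computation; either way the result stands.
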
		
		
		\begin{lma} [{\cite[Corollary to Entry~31]{berndt2012ramanujan}}]
			We have 
			\begin{equation}\label{f22byf1}
				\psi(q)=\frac{f_6f_9^2}{f_3f_{18}}+q\frac{f_{18}^2}{f_9}.
			\end{equation}
		\end{lma}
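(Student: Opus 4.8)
The identity \eqref{f22byf1} is nothing but the $3$-dissection of $\psi(q)$, and the plan is to prove it directly from the series definition of Ramanujan's theta function together with the Jacobi triple product. First I would write $\psi(q)=f(q,q^3)=\sum_{n=-\infty}^{\infty}q^{n(2n-1)}$, so that the exponents run exactly once over the triangular numbers $0,1,3,6,10,\dots$. Splitting the summation index according to $n\equiv 0,1,2\pmod 3$ and computing $n(2n-1)\bmod 3$ in each class, one finds that the residue $2\pmod 3$ never occurs among triangular numbers. Concretely, the class $n=3j+1$ gives $\sum_{j}q^{18j^{2}+9j+1}$, while the classes $n=3j$ and $n=3j-1$ give $\sum_{j}q^{18j^{2}-3j}$ and $\sum_{j}q^{18j^{2}-15j+3}$, respectively.

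The first of these is immediate: $\sum_{j}q^{18j^{2}+9j+1}=q\sum_{j}(q^{9})^{2j^{2}+j}=q\,\psi(q^{9})$, and replacing $q$ by $q^{9}$ in $\psi(q)=f_2^2/f_1$ identifies it as $q\,f_{18}^{2}/f_{9}$, the second term of \eqref{f22byf1}. For the two remaining sums I would recognize them as halves of a single theta function. Writing each as a Ramanujan theta function yields $\sum_{j}q^{18j^{2}-3j}=f(q^{15},q^{21})$ and $q^{3}\sum_{j}q^{18j^{2}-15j}=q^{3}f(q^{3},q^{33})$, and the general two-dissection obtained by splitting the index of $f(a,b)=\sum_n a^{n(n+1)/2}b^{n(n-1)/2}$ by parity, namely $f(a,b)=f(a^{3}b,ab^{3})+a\,f(b/a,a^{5}b^{3})$, specializes at $a=q^{3},\,b=q^{6}$ to $f(q^{3},q^{6})=f(q^{15},q^{21})+q^{3}f(q^{3},q^{33})$. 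Equivalently, since $f(q^{3},q^{6})=\sum_{n}q^{3\cdot n(3n-1)/2}$ is Euler's pentagonal series with argument scaled by $q^{3}$, one may simply split that series by the parity of $n$ and obtain the same grouping. Either route shows the $n\equiv 0,2$ contribution equals $f(q^{3},q^{6})$.

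It then remains to evaluate $f(q^{3},q^{6})$ as an eta-quotient. By the Jacobi triple product $f(a,b)=(-a;ab)_\infty(-b;ab)_\infty(ab;ab)_\infty$ with $a=q^{3}$, $b=q^{6}$, $ab=q^{9}$, this equals $(-q^{3};q^{9})_\infty(-q^{6};q^{9})_\infty\,f_{9}$. Collecting the three residue classes of exponents modulo $9$, one has $(-q^{3};q^{9})_\infty(-q^{6};q^{9})_\infty(-q^{9};q^{9})_\infty=(-q^{3};q^{3})_\infty$, so that $(-q^{3};q^{9})_\infty(-q^{6};q^{9})_\infty=(-q^{3};q^{3})_\infty/(-q^{9};q^{9})_\infty$; using $(-q^{k};q^{k})_\infty=f_{2k}/f_{k}$ this simplifies to $(f_{6}/f_{3})/(f_{18}/f_{9})=f_{6}f_{9}/(f_{3}f_{18})$. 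Hence $f(q^{3},q^{6})=f_{6}f_{9}^{2}/(f_{3}f_{18})$, and adding the term $q\,f_{18}^{2}/f_{9}$ gives exactly \eqref{f22byf1}.

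I expect the only delicate point to be the theta-function two-dissection identifying the $n\equiv 0,2$ classes with the single function $f(q^{3},q^{6})$; once that grouping is pinned down, the Jacobi triple product simplification is routine bookkeeping. As a completely self-contained alternative that avoids the theta-series manipulation, one could instead factor $\psi(q)=f_2^2/f_1=(f_1f_2)\cdot(f_2/f_1^{2})$ and multiply the $3$-dissections \eqref{f1f2} and \eqref{f2byf12}, checking that the coefficients of $q^{3n+2}$ cancel and that the remaining two classes collapse to the stated products; this is conceptually simpler but computationally heavier.
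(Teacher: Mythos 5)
Your proof is correct, and it is worth noting that the paper does not prove this lemma at all: it simply quotes it from Berndt (Corollary to Entry 31), so you have supplied a derivation where the paper gives only a citation. Your argument is essentially the classical one underlying Berndt's corollary, and every step checks out: the exponents in the three residue classes are indeed $18j^2-3j$, $18j^2+9j+1$, and $18j^2-15j+3$; the middle class gives $q\,\psi(q^9)=q f_{18}^2/f_9$; the dissection $f(a,b)=f(a^3b,ab^3)+a\,f(b/a,a^5b^3)$ is valid (it is the parity split of the index, using that the sum runs over all integers so $m\mapsto -m$ is allowed) and correctly specializes to $f(q^3,q^6)=f(q^{15},q^{21})+q^3f(q^3,q^{33})$; and the triple-product simplification $(-q^3;q^9)_\infty(-q^6;q^9)_\infty=f_6f_9/(f_3f_{18})$ gives $f(q^3,q^6)=f_6f_9^2/(f_3f_{18})$. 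One structural remark: in Berndt's treatment, Entry 31 is itself the general $N$-dissection of $f(a,b)$, and applying it with $N=3$ directly to $\psi(q)=f(q,q^3)$ yields $\psi(q)=f(q^3,q^6)+q\,\psi(q^9)$ in a single step; your route of first splitting by $n\bmod 3$ into three theta series and then regluing the classes $n\equiv 0,2$ via the parity dissection is a small detour, though your own observation that the classes $0,2$ are just the even/odd split of $\sum_n q^{3n(3n-1)/2}=f(q^3,q^6)$ closes that gap immediately (a minor caution: with positive signs this series is not Euler's pentagonal product $f_1$, but you use only its exponent structure, so no harm is done). Your proposed alternative of multiplying the dissections \eqref{f1f2} and \eqref{f2byf12} does make the $q^{3n+2}$ class cancel identically, but collapsing the remaining two classes to the stated eta-quotients requires further nontrivial level-9 identities, so your instinct to relegate it to a remark is sound.
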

		\begin{lma} [{\cite{toh2012ramanujan}}] The following $3$-dissection holds:
			\begin{align}
				\frac{f_{2}^{3}}{f_{1}^{3}}=\frac{f_{6}}{f_{3}}+3q\frac{f_{6}^{4}f_{9}^{5}}{f_{3}^{8}f_{18}}+6q^{2}\frac{f_{6}^{3}f_{9}^{2}f_{18}^{2}}{f_{3}^{7}}+12q^{3}\frac{f_{6}^{2}f_{18}^{5}}{f_{3}^{6}f_{9}}.\label{f23byf13}
			\end{align}
		\end{lma}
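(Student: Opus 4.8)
The plan is to read the dissection off the trivial factorisation
\[
\frac{f_2^3}{f_1^3}=\frac{f_2^2}{f_1}\cdot\frac{f_2}{f_1^2}=\psi(q)\cdot\frac{f_2}{f_1^2},
\]
and substitute the two $3$-dissections already available, namely \eqref{f22byf1} for $\psi(q)$ and \eqref{f2byf12} for $f_2/f_1^2$. Writing $\tfrac{f_2}{f_1^2}=a_0+qa_1+q^2a_2$ and $\psi(q)=b_0+qb_1$, where the $a_i$ and $b_j$ are the indicated functions of $q^3$, the product is
\[
a_0b_0+q\,(a_0b_1+a_1b_0)+q^2(a_1b_1+a_2b_0)+q^3a_2b_1 ,
\]
and since every $a_i,b_j$ is a power series in $q^3$ this expression already separates the three residue classes modulo $3$.

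First I would dispose of the two ``off-diagonal'' classes, which require no further input. A one-line monomial simplification of the tabulated $f_k$'s gives $a_0b_1+a_1b_0=3\,\tfrac{f_6^4f_9^5}{f_3^8f_{18}}$ and $a_1b_1+a_2b_0=6\,\tfrac{f_6^3f_9^2f_{18}^2}{f_3^7}$, which match the $q$- and $q^2$-terms of \eqref{f23byf13} on the nose. The residue-$0$ class is the real issue: the product delivers it as $a_0b_0+q^3a_2b_1=\tfrac{f_6^5f_9^8}{f_3^9f_{18}^4}+4q^3\tfrac{f_6^2f_{18}^5}{f_3^6f_9}$, whereas \eqref{f23byf13} records it as $\tfrac{f_6}{f_3}+12q^3\tfrac{f_6^2f_{18}^5}{f_3^6f_9}$. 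Equating the two and replacing $q^3$ by $q$ (so that each $f_{3k}$ becomes $f_k$) reduces everything to the single auxiliary identity
\[
\frac{f_2^4f_3^8}{f_1^8f_6^4}=1+8q\,\frac{f_2f_6^5}{f_1^5f_3},
\qquad\text{equivalently}\qquad
\Bigl(\frac{\varphi(-q^3)}{\varphi(-q)}\Bigr)^{4}=1+8q\,\frac{f_2f_6^5}{f_1^5f_3}.
\]

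Proving this last identity is where I expect the genuine difficulty to lie, since it is a degree-$3$ relation and is not one of the dissections listed in the Preliminaries. Both sides are weight-zero eta-quotients of level $6$, so one route is the standard argument that equality of a computable (finite) number of initial coefficients forces the identity; as a sanity check both sides agree through $q^3$, each beginning $1+8q+40q^2+152q^3+\cdots$. Alternatively I would derive it from the classical modular equation of degree $3$ for $\varphi(-q)$. Once this auxiliary identity is in hand, the residue-$0$ class of the product matches the stated form, and \eqref{f23byf13} follows.
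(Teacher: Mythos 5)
Your computation is correct, but note first that the paper offers no proof of this lemma at all: it is quoted verbatim from Toh \cite{toh2012ramanujan}, so any proof you give is necessarily a different route from the paper's. Your decomposition $f_2^3/f_1^3=\psi(q)\cdot f_2/f_1^2$ combined with \eqref{f22byf1} and \eqref{f2byf12} is sound, and your identifications of the $q$- and $q^2$-classes are exactly right. The one place where you declare a genuine difficulty --- the auxiliary identity $\bigl(\varphi(-q^3)/\varphi(-q)\bigr)^4=1+8q\,f_2f_6^5/(f_1^5f_3)$ --- in fact requires neither a Sturm-bound computation nor the degree-$3$ modular equation: it is the residue-$0$ class of the trivial identity $\varphi(-q)\cdot\varphi(-q)^{-1}=1$, dissected with the paper's own pair \eqref{f2byf12} and \eqref{f12byf2}. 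Indeed, with your $a_0,a_1,a_2$ and with $f_1^2/f_2=B_0+qB_1$, where $B_0=f_9^2/f_{18}$ and $B_1=-2f_3f_{18}^2/(f_6f_9)$, extracting the $q^{3n}$-terms of $(a_0+qa_1+q^2a_2)(B_0+qB_1)=1$ yields
\[
\frac{f_6^4f_9^8}{f_3^8f_{18}^4}-8q^3\,\frac{f_6f_{18}^5}{f_3^5f_9}=1,
\]
and multiplying through by $f_6/f_3$ gives precisely $\dfrac{f_6^5f_9^8}{f_3^9f_{18}^4}=\dfrac{f_6}{f_3}+8q^3\dfrac{f_6^2f_{18}^5}{f_3^6f_9}$, which is your auxiliary identity in the variable $q^3$. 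With this substitution your argument closes completely and is arguably preferable to a bare citation: it derives \eqref{f23byf13} elementarily from three dissections already recorded in the Preliminaries, with the bonus that the residue-$1$ and residue-$2$ classes of the same product-equals-one identity furnish free consistency checks on your off-diagonal computations. (If you do retain the Sturm-bound fallback, remember that both sides of your auxiliary identity are weight-zero eta-quotients, so you must first clear denominators to land in a space of holomorphic modular forms before a finite coefficient check is conclusive.)
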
	
		\begin{lma} [{\cite{baruah2015partitions}}] The following $3$-dissection holds:
			\begin{align}
				\frac{f_4}{f_1} = \frac{f_{12} f_{18}^4}{f_3^3 f_{36}^2} + q\frac{f_6^2 f_9^3 f_{36}}{f_3^4 f_{18}^2} 
				+ 2q^2\frac{f_6 f_{18} f_{36}}{f_3^3}.\label{f4byf1}
			\end{align}
		\end{lma}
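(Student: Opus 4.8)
The plan is to express $f_4/f_1$ as a product of two quantities whose $3$-dissections are available, and then to sort the product by the residue of the exponent modulo $3$. The key starting observation is the elementary factorization
\[
\frac{f_4}{f_1}=\frac{f_2}{f_1^{2}}\cdot\frac{f_1f_4}{f_2},
\]
in which the second factor is a single theta function: since $\psi(q)=f_2^{2}/f_1$ and $(-q;-q)_\infty=f_2^{3}/(f_1f_4)$ (the relation already used to pass from \eqref{gr} to \eqref{f3byf13}), replacing $q$ by $-q$ in $\psi$ gives $\psi(-q)=f_1f_4/f_2$. Thus $f_4/f_1=\bigl(f_2/f_1^{2}\bigr)\,\psi(-q)$, and it suffices to $3$-dissect each factor and multiply.

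For the first factor I would quote \eqref{f2byf12} verbatim. For the second, I would apply $q\mapsto-q$ to the dissection \eqref{f22byf1} of $\psi(q)$, using $f_1\mapsto f_2^{3}/(f_1f_4)$, $f_3\mapsto f_6^{3}/(f_3f_{12})$, $f_9\mapsto f_{18}^{3}/(f_9f_{36})$ and leaving $f_2,f_6,f_{18}$ fixed. After simplification this is expected to give
\[
\psi(-q)=\frac{f_3f_{12}f_{18}^{5}}{f_6^{2}f_9^{2}f_{36}^{2}}-q\,\frac{f_9f_{36}}{f_{18}},
\]
a genuine $3$-dissection in which each eta quotient is a series in $q^{3}$. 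Writing the first factor as $A_0+qA_1+q^{2}A_2$ and the second as $B_0+qB_1$, where the $A_i$ and $B_j$ are functions of $q^{3}$, the six pairwise products fall into the residue classes as
\[
q^{3n}:\ A_0B_0+q^{3}A_2B_1,\qquad q^{3n+1}:\ q\,(A_0B_1+A_1B_0),\qquad q^{3n+2}:\ q^{2}\,(A_1B_1+A_2B_0).
\]
Reading off these three coefficients and comparing with the right-hand side of \eqref{f4byf1} yields exactly the claimed $q^{0}$, $q^{1}$ and $q^{2}$ terms, \emph{provided} each pairwise sum collapses to the single eta quotient asserted there.

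The main obstacle is precisely this collapse. Dividing each comparison by its target monomial and setting $X=f_6^{2}f_9^{4}/(f_3^{4}f_{18}^{2})$ and $Q=q^{3}$, the $q^{3n}$ class reduces to
\[
X-1=4Q\,\frac{f_6^{2}f_9f_{36}^{3}}{f_3^{3}f_{12}f_{18}^{2}},
\]
while both the $q^{3n+1}$ and $q^{3n+2}$ classes reduce to the single relation
\[
X+1=2\,\frac{f_{12}f_{18}^{7}}{f_3^{2}f_6f_9^{2}f_{36}^{3}};
\]
multiplying these two gives the consolidated identity $X^{2}=1+8Q\,f_6f_{18}^{5}/(f_3^{5}f_9)$. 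Establishing these theta identities is the real content of the argument: I would attack them by a further application of the dissection lemmas \eqref{f2byf12} and \eqref{f22byf1} together with the Jacobi triple product, or, since all terms are eta quotients on a fixed congruence subgroup, by verifying the relations on finitely many power-series coefficients.
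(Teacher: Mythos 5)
You should first note that the paper contains no proof of this lemma at all: it is quoted verbatim from Baruah and Ojah \cite{baruah2015partitions}. So your attempt is necessarily an independent derivation, and the question is whether it is complete. Its reductive part is correct. The factorization $f_4/f_1=(f_2/f_1^{2})\cdot\psi(-q)$ with $\psi(-q)=f_1f_4/f_2$ is right; the $3$-dissection $\psi(-q)=\frac{f_3f_{12}f_{18}^{5}}{f_6^{2}f_9^{2}f_{36}^{2}}-q\frac{f_9f_{36}}{f_{18}}$ obtained from \eqref{f22byf1} by $q\mapsto-q$ is correct (the $q$-part is $-q\,\psi(-q^{9})$, as it should be, since triangular numbers avoid the class $2\bmod 3$); the sorting into residue classes is right; and I have verified the algebra of your three comparisons: dividing by the target eta quotients, the $q^{3n}$ class reduces exactly to $X-1=4Q\,\frac{f_6^{2}f_9f_{36}^{3}}{f_3^{3}f_{12}f_{18}^{2}}$ and the $q^{3n+1}$ and $q^{3n+2}$ classes both reduce to $X+1=2\,\frac{f_{12}f_{18}^{7}}{f_3^{2}f_6f_9^{2}f_{36}^{3}}$, where $X=f_6^{2}f_9^{4}/(f_3^{4}f_{18}^{2})$. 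Both relations are in fact true (they check against series expansions to many terms), so your route is completable in principle.

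The genuine gap is that you never prove these two residual identities, and, as you yourself concede, they carry the entire content. After replacing $q^{3}$ by $q$ and clearing the factor $f_1^{4}/f_2^{2}=\varphi(-q)^{2}$, they read
\begin{equation*}
\varphi(-q^{3})^{2}-\varphi(-q)^{2}=4q\,\frac{f_1f_3f_{12}^{3}}{f_4f_6^{2}},
\qquad
\varphi(-q)^{2}+\varphi(-q^{3})^{2}=2\,\frac{f_1^{2}f_4f_6^{7}}{f_2^{3}f_3^{2}f_{12}^{3}},
\end{equation*}
which are level-$12$ theta identities of essentially the same depth as \eqref{f4byf1} itself; as written, your argument trades one unproved dissection for two unproved identities. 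Your first suggested finish (``a further application of \eqref{f2byf12} and \eqref{f22byf1} together with the Jacobi triple product'') is only a hope, not a worked step: squaring the dissection \eqref{f12byf2} of $\varphi(-q)$ produces dissection statements in the variable $q$, whereas the two identities above are not dissections, so it is not evident that route terminates. Your second finish, checking finitely many coefficients, can be made rigorous --- both identities are statements about eta quotients on $\Gamma_0(144)$, so Ligozat's modularity criterion plus a Sturm-type bound reduces them to a finite verification --- but none of that bookkeeping (the congruence subgroup, the cusp/order computations, the explicit bound, the coefficient check) is actually carried out. Until one of these two finishes is executed, the proposal is an accurate reduction, not a proof; ironically, completing it would give more than the paper itself, which settles the lemma by citation alone.
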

		\begin{lma}[{\cite[Theorem ~2.2]{cui2013}}]\label{a1}
			For any prime $p\ge5$,
			\begin{equation}\label{a17}
				f_1=\sum\limits_{\substack{k=\frac{1-p}{2}\\k\neq\frac{\pm p-1}{6}
				}}^{\frac{p-1}{2}}{(-1)^kq^{\frac{3k^2+k}{2}}f\left(-q^{\frac{3p^2+(6k+1)p}{2}},-q^{\frac{3p^2-(6k+1)p}{2}}\right)}+(-1)^{\frac{\pm p-1}{6}}q^{\frac{p^2-1}{24}}f_{p^2},
			\end{equation}
			where
			\begin{equation}
				\dfrac{\pm p-1}{6}:=\begin{cases}
					\frac{p-1}{6}, & \text{if $p\equiv 1\pmod{6}$},\\
					\frac{-p-1}{6}, & \text{if $p\equiv -1\pmod{6}$}.\nonumber
				\end{cases}
			\end{equation}
		\end{lma}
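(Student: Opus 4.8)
The plan is to derive this $p$-dissection directly from Euler's pentagonal number theorem by sorting the terms according to their residue modulo $p$. Recall that
\[
f_1=(q;q)_\infty=\sum_{n=-\infty}^{\infty}(-1)^n q^{\frac{3n^2-n}{2}},
\]
and replacing $n$ by $-n$ (which leaves the range of summation unchanged and fixes the sign, since $(-1)^{-n}=(-1)^n$) rewrites this as $f_1=\sum_{n=-\infty}^{\infty}(-1)^n q^{\frac{3n^2+n}{2}}$. The key arithmetic observation is the identity $\frac{3n^2+n}{2}=\frac{(6n+1)^2-1}{24}$; since $\gcd(24,p)=1$ for $p\ge5$, the residue of the exponent modulo $p$ is governed by $(6n+1)^2\pmod p$, and this is what drives the dissection.

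Next I would partition the index set $\mathbb{Z}$ by writing $n=pm+k$, where $k$ runs over the symmetric complete residue system $\left\{\tfrac{1-p}{2},\dots,\tfrac{p-1}{2}\right\}$ and $m\in\mathbb{Z}$. Expanding and using that $p$ is odd, so that $(-1)^{pm+k}=(-1)^k(-1)^m$, a direct computation gives
\[
\frac{3n^2+n}{2}=\frac{3k^2+k}{2}+\frac{3p^2}{2}\,m^2+\frac{(6k+1)p}{2}\,m,
\]
so the contribution of a fixed $k$ factors as $(-1)^k q^{\frac{3k^2+k}{2}}\sum_{m}(-1)^m q^{\frac{3p^2}{2}m^2+\frac{(6k+1)p}{2}m}$.

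The heart of the argument is to recognise this inner sum as a Ramanujan theta function. Setting $a=-q^{\frac{3p^2+(6k+1)p}{2}}$ and $b=-q^{\frac{3p^2-(6k+1)p}{2}}$, the generic term $a^{m(m+1)/2}b^{m(m-1)/2}$ contributes $q$ to the power $\frac{3p^2}{2}m^2+\frac{(6k+1)p}{2}m$, while its two sign factors combine to $(-1)^{m(m+1)/2+m(m-1)/2}=(-1)^{m^2}=(-1)^m$. Hence the inner sum equals $f(a,b)$ exactly, which produces all the summands indexed by $k\neq\frac{\pm p-1}{6}$.

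Finally I would isolate the distinguished residue $k_0=\frac{\pm p-1}{6}$, checking that it lies in the chosen range and that $6k_0+1=\pm p$. There $\frac{3k_0^2+k_0}{2}=\frac{(6k_0+1)^2-1}{24}=\frac{p^2-1}{24}$, and the associated theta function degenerates, its arguments becoming $-q^{2p^2}$ and $-q^{p^2}$; by the symmetry $f(a,b)=f(b,a)$ together with $f(-q,-q^2)=f_1$ one obtains $f(-q^{2p^2},-q^{p^2})=f_{p^2}$. The corresponding term is therefore $(-1)^{\frac{\pm p-1}{6}}q^{\frac{p^2-1}{24}}f_{p^2}$, and collecting it with the theta-function summands yields the stated formula. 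The only genuinely delicate points are the sign bookkeeping in identifying $f(a,b)$ and the verification that the special residue collapses correctly; as the lemma is quoted from Cui and Gu~\cite{cui2013}, this derivation serves only for completeness.
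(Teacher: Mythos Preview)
Your argument is correct; partitioning Euler's pentagonal sum by residues of $n$ modulo $p$ and recognising each inner sum as a Ramanujan theta function is exactly the standard proof of this $p$-dissection, and your sign and range verifications all go through. The paper itself gives no proof of this lemma---it is simply quoted from Cui and Gu~\cite{cui2013}---so there is nothing further to compare.
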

		
		\begin{lma}[{\cite[Theorem ~2.1]{cui2013}}]
			For any odd prime p,
			\begin{align}
				\psi(q)=\sum_{m=0}^{\frac{p-3}{2}}q^{\frac{m^{2}+m}{2}}f\left ( q^\frac{p^2+(2m+1)p}{2},q^\frac{p^2-(2m+1)p}{2} \right )+q^{\frac{p^{2}-1}{8}}\frac{f_{2p^{2}}^{2}}{f_{p^{2}}}.\label{a2}
			\end{align}
		\end{lma}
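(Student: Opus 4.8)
The plan is to begin from the series representation $\psi(q)=\sum_{k\ge0}q^{k(k+1)/2}$, which is immediate since $\psi(q)=f(q,q^3)$ and the exponents $k(k+1)/2$ run exactly once over the triangular numbers as $k\ge0$. Writing $k=pj+m$ with $0\le m\le p-1$ and $j\ge0$ gives $\psi(q)=\sum_{m=0}^{p-1}\sum_{j\ge0}q^{(pj+m)(pj+m+1)/2}$, so the whole task reduces to re-expressing each one-sided inner sum in terms of a Ramanujan theta-function. The governing observation is the symmetry $T_k=T_{-k-1}$ of the triangular numbers $T_k:=k(k+1)/2$, and the fact that $k\mapsto-k-1$ sends the residue class $m$ to the class $p-1-m\pmod p$.

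First I would pair, for $0\le m\le(p-3)/2$, the class $m$ with the class $p-1-m$. Substituting $k=pj+(p-1-m)$ and applying $T_k=T_{-k-1}$ rewrites $\sum_{j\ge0}q^{T_{pj+(p-1-m)}}$ as $\sum_{j\le-1}q^{T_{pj+m}}$, so the two one-sided sums combine into the full bilateral sum $\sum_{j\in\mathbb{Z}}q^{T_{pj+m}}$. The central class $m=(p-1)/2$ is fixed by the involution $k\mapsto-k-1$ and must be handled separately. A quick count then shows that the pairs $\{m,p-1-m\}$ for $0\le m\le(p-3)/2$ together with the fixed class $(p-1)/2$ partition $\{0,1,\dots,p-1\}$ exactly once, which guarantees that nothing is omitted or double-counted.

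For the paired classes I would expand the exponent directly: a short computation gives $T_{pj+m}=\tfrac{m^2+m}{2}+\tfrac{p^2j^2+(2m+1)pj}{2}$, so that $\sum_{j\in\mathbb{Z}}q^{T_{pj+m}}=q^{(m^2+m)/2}\sum_{j\in\mathbb{Z}}q^{[p^2j^2+(2m+1)pj]/2}$. Comparing with $f(a,b)=\sum_j a^{j(j+1)/2}b^{j(j-1)/2}=\sum_j q^{[(A+B)j^2+(A-B)j]/2}$ for $a=q^A,\ b=q^B$, I would read off $A+B=p^2$ and $A-B=(2m+1)p$, that is $A=\tfrac{p^2+(2m+1)p}{2}$ and $B=\tfrac{p^2-(2m+1)p}{2}$, with $|ab|=|q^{p^2}|<1$. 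This reproduces exactly the theta-function appearing in the sum over $m$ in \eqref{a2}. For the central class $m=(p-1)/2$ the exponent collapses cleanly, $T_{pj+(p-1)/2}=p^2T_j+\tfrac{p^2-1}{8}$, whence $\sum_{j\ge0}q^{T_{pj+(p-1)/2}}=q^{(p^2-1)/8}\psi(q^{p^2})=q^{(p^2-1)/8}\,f_{2p^2}^2/f_{p^2}$, which is precisely the remaining term.

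Summing the $(p-1)/2$ paired contributions and the single central contribution then recovers $\psi(q)$ in the stated form, completing the proof. I expect the main obstacle to be purely organizational rather than analytic: the delicate step is converting the one-sided sums over $j\ge0$ into bilateral sums over $j\in\mathbb{Z}$ via the involution $k\mapsto-k-1$ without double-counting, and correctly isolating the self-paired central residue $m=(p-1)/2$ from the genuinely paired ones. Getting these index ranges and the pairing exactly right is the crux; once the bookkeeping is settled, the exponent identifications are routine algebra.
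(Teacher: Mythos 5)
Your proof is correct. Note that the paper itself gives no proof of this lemma: it is imported verbatim from Cui and Gu \cite[Theorem 2.1]{cui2013}, so there is nothing in the paper to compare against line by line. Your argument --- splitting $\psi(q)=\sum_{k\ge0}q^{k(k+1)/2}$ into residue classes $k\equiv m\pmod p$, using the involution $k\mapsto -k-1$ (so $T_k=T_{-k-1}$ and $m\mapsto p-1-m$) to fold the paired classes into bilateral theta series with $A+B=p^2$, $A-B=(2m+1)p$, and isolating the self-paired class $m=\frac{p-1}{2}$ to produce $q^{(p^2-1)/8}\psi(q^{p^2})=q^{(p^2-1)/8}f_{2p^2}^2/f_{p^2}$ --- is exactly the standard dissection proof given in the cited source, with the index bookkeeping (the pairs $\{m,p-1-m\}$ plus the fixed class exhausting $\{0,\dots,p-1\}$ without overlap, and positivity of $B=\frac{p(p-2m-1)}{2}$ for $m\le\frac{p-3}{2}$) handled correctly.
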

		Furthermore,
		\begin{equation*}
			\frac{m^{2}+m}{2}\not\equiv \frac{p^{2}-1}{8}\pmod{p} {\hspace{1mm}} \text{for} {\hspace{1mm}} 0\leq m\leq \frac{p-3}{2}.
		\end{equation*}
		
		\begin{lma}[{\cite[Theorem 2.1]{LW}}]
			For any odd prime $p$,
			\begin{equation}
				f_{1}^{3}=\sum_{k=-\frac{p-1}{2}}^{\frac{p-3}{2}}(-1)^kq^{\frac{k^{2}+k}{2}}B_k(q^p)+(-1)^{\frac{p-1}{2}}pq^{\frac{p^{2}-1}{8}}f_{p^{2}}^{3},\label{abc1}
			\end{equation}
		\end{lma}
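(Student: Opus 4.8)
The plan is to obtain \eqref{abc1} directly from Jacobi's identity for the cube,
\[
f_1^3=\sum_{n\ge 0}(-1)^n(2n+1)\,q^{n(n+1)/2},
\]
by partitioning the summation index $n$ according to its residue modulo $p$. Writing each $n\ge 0$ uniquely as $n=pm+k$ with $k$ running over the symmetric set of representatives $\{-(p-1)/2,\dots,(p-1)/2\}$ and $m$ over the nonnegative integers for which $pm+k\ge0$, the goal is to show that the single class $k=(p-1)/2$ collapses to the isolated term $(-1)^{(p-1)/2}pq^{(p^2-1)/8}f_{p^2}^3$, while the remaining $p-1$ classes produce precisely the series $\sum_{k}(-1)^kq^{(k^2+k)/2}B_k(q^p)$ over $-(p-1)/2\le k\le(p-3)/2$.

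The first step is a routine computation on a general class. Expanding the triangular number,
\[
T_{pm+k}=\frac{(pm+k)(pm+k+1)}{2}=p\cdot\frac{m(pm+2k+1)}{2}+\frac{k(k+1)}{2},
\]
where the first fraction is an integer because $pm+2k+1\equiv m+1\pmod 2$; hence $T_{pm+k}\equiv (k^2+k)/2\pmod p$ and, more precisely, $q^{T_{pm+k}}=q^{(k^2+k)/2}\,(q^{p})^{m(pm+2k+1)/2}$. Combined with the sign identity $(-1)^{pm+k}=(-1)^m(-1)^k$, which holds since $p$ is odd, this shows that the subsum over $m$ in class $k$ equals $(-1)^k q^{(k^2+k)/2}B_k(q^p)$ for a power series $B_k$ in $q^p$. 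This is exactly the shape of the first sum in \eqref{abc1}.

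The crux is the diagonal class $k=(p-1)/2$, which is precisely the representative excluded from the stated range. Here the linear coefficient factors, $2(pm+k)+1=2pm+p=p(2m+1)$, pulling out a factor $p$, while the exponent simplifies cleanly to
\[
T_{pm+(p-1)/2}=p^2\cdot\frac{m(m+1)}{2}+\frac{p^2-1}{8}.
\]
Substituting both facts, the diagonal subsum becomes
\[
(-1)^{(p-1)/2}\,p\,q^{(p^2-1)/8}\sum_{m\ge 0}(-1)^m(2m+1)\,(q^{p^2})^{m(m+1)/2},
\]
and a second application of Jacobi's identity, now in the variable $q^{p^2}$, collapses the inner sum to $f_{p^2}^3$, yielding the isolated term.

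I expect the principal difficulty to be bookkeeping rather than conceptual: keeping the sign $(-1)^{pm+k}$ consistent with the parity of $p$, choosing the symmetric representatives so that the excluded diagonal class lands exactly at $k=(p-1)/2$ (so that the surviving indices fill out $-(p-1)/2\le k\le(p-3)/2$), and tracking the lower limits on $m$ for the classes with negative $k$. The one genuinely substantive step is the recognition that the diagonal class re-sums, via Jacobi's identity applied a second time, into the clean cube $f_{p^2}^3$; every other class is merely reindexed into its defining series $B_k(q^p)$.
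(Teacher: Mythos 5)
Your overall strategy---dissecting Jacobi's identity $f_1^3=\sum_{n\ge0}(-1)^n(2n+1)q^{n(n+1)/2}$ according to the residue of $n$ modulo $p$---is the standard route to this lemma (the paper does not reprove it; it cites Wang~\cite{LW}, whose argument is of exactly this type), and your treatment of the diagonal class $k=(p-1)/2$ is correct: there $2n+1=p(2m+1)$, the exponent is $p^2\,m(m+1)/2+(p^2-1)/8$, the class is stable under $n\ge 0$ with $m\ge 0$, and a second application of Jacobi's identity in the variable $q^{p^2}$ produces the isolated term $(-1)^{(p-1)/2}pq^{(p^2-1)/8}f_{p^2}^3$.

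The genuine gap is in the non-diagonal classes. The lemma's $B_k$ is not ``a power series'' you are free to define by your subsum: it is the explicit \emph{bilateral} series $B_k(q)=\tfrac12\sum_{n=-\infty}^{\infty}(-1)^n(2pn+2k+1)q^{(pn^2+(2k+1)n)/2}$, and your one-sided subsum over $\{m:\,pm+k\ge0\}$ does \emph{not} equal $(-1)^kq^{(k^2+k)/2}B_k(q^p)$. Concretely, for $p=3$ and $k=0$ your subsum is $1-7q^6+13q^{21}-\cdots$, whereas $B_0(q^3)=\tfrac12+\tfrac52q^3-\tfrac72q^6-\cdots$; the discrepancy (including the factor $\tfrac12$) arises because the terms of $B_k$ with negative index correspond, under the involution $n\mapsto -n-1$, to the \emph{nonnegative} part of the different class $-k-1$. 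Only the class $k=(p-1)/2$ is self-paired under this involution, which is exactly why your computation succeeds there and only there. The standard repair is to symmetrize first: since $n\mapsto -n-1$ fixes each term of Jacobi's series, one has $2f_1^3=\sum_{n\in\mathbb{Z}}(-1)^n(2n+1)q^{n(n+1)/2}$; dissecting this bilateral sum over all $p$ residue classes makes the class-$k$ contribution literally $(-1)^kq^{(k^2+k)/2}\cdot 2B_k(q^p)$ (the $\tfrac12$ in the definition of $B_k$ absorbing the symmetrization), and dividing by $2$ gives \eqref{abc1}. Alternatively, keep your one-sided sum but pair the classes $k$ and $-k-1$ and verify that their combined contribution equals the two corresponding $B$-terms. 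Either fix is routine, but as written your class-by-class identification is false, so the proof does not establish the identity with the $B_k$ actually defined in the statement.
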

		where $$B_k(q):=\frac{1}{2}\sum_{n=-\infty}^{\infty}(-1)^n(2pn+2k+1)q^{\dfrac{pn^2+(2k+1)n}{2}}.$$
		
		
		
		\section{Congruences for $\overline{S_9}(n)$}\label{s9}
		In this section, we establish several congruences and related arithmetic properties of $\overline{S}_9(n)$.
		\begin{thm}
			Let $\zeta, \eta \geq 1$ and $m, n \geq 0$ be integers. For any odd integer $t \geq 3$, we have 
			\begin{equation}
				\overline{S}_t^{2^\zeta m + \eta}(n) \equiv \overline{S}_t^{\eta}(n) \pmod{2^{\zeta + 1}}. \label{mainth}
			\end{equation}
		\end{thm}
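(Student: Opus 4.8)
The plan is to reduce the whole statement to a single multiplicative claim about the base generating function. Writing
\[
G(q) := \frac{f_2^{3} f_t^{2} f_{4t}}{f_1^{2} f_4 f_{2t}^{3}},
\]
identity~\eqref{tspr} says precisely that $\sum_{n\ge0}\overline{S}_t^{\,r}(n)q^n = G(q)^r$ for every positive integer $r$. Hence, comparing coefficients, the asserted congruence~\eqref{mainth} is equivalent to the power-series congruence
\[
G(q)^{2^\zeta m+\eta}\equiv G(q)^{\eta}\pmod{2^{\zeta+1}}.
\]
Factoring $G(q)^{2^\zeta m+\eta}-G(q)^{\eta}=G(q)^{\eta}\bigl(G(q)^{2^\zeta m}-1\bigr)$ and noting that $G(q)^{\eta}$ has integer coefficients, everything reduces to showing $G(q)^{2^\zeta m}\equiv 1\pmod{2^{\zeta+1}}$. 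Since a congruence $A\equiv1$ is preserved under taking $m$-th powers, it suffices to prove the single statement $G(q)^{2^\zeta}\equiv 1\pmod{2^{\zeta+1}}$ for all $\zeta\ge0$.

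First I would establish the base case $G(q)\equiv 1\pmod2$. Applying \eqref{bt} with $p=2,\ l=1$ gives $f_{2k}\equiv f_k^2\pmod2$, so that $f_2\equiv f_1^2$, $f_4\equiv f_1^4$, $f_{2t}\equiv f_t^2$ and $f_{4t}\equiv f_t^4$ modulo $2$. Substituting these into the numerator and denominator of $G$, both collapse to $f_1^6 f_t^6\pmod2$; since $f_1^6 f_t^6$ has constant term $1$ and is therefore invertible as a power series, the quotient satisfies $G(q)\equiv 1\pmod2$.

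The remaining step is an induction on $\zeta$ by repeated squaring. Writing $G(q)^{2^\zeta}=1+2^{\zeta+1}K_\zeta(q)$ for some integer power series $K_\zeta$, squaring yields
\[
G(q)^{2^{\zeta+1}}=\bigl(1+2^{\zeta+1}K_\zeta\bigr)^2 = 1+2^{\zeta+2}K_\zeta+2^{2\zeta+2}K_\zeta^2\equiv 1\pmod{2^{\zeta+2}},
\]
because $2\zeta+2\ge\zeta+2$ for $\zeta\ge0$. This advances the induction and establishes $G(q)^{2^\zeta}\equiv1\pmod{2^{\zeta+1}}$ for every $\zeta\ge0$, which combined with the reduction above completes the proof.

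There is essentially no hard analytic obstacle here: the engine is the elementary $2$-adic squaring step together with the Frobenius-type congruence~\eqref{bt}. The one point demanding genuine care is the base reduction $G\equiv1\pmod2$: one must track the exact exponents of $f_1$ and $f_t$ produced by the substitutions $f_{2k}\equiv f_k^2$ and check that the numerator $f_2^3 f_t^2 f_{4t}$ and denominator $f_1^2 f_4 f_{2t}^3$ each reduce to the \emph{same} series $f_1^6 f_t^6$, so that the quotient is honestly $1$ and not merely congruent to some other unit. Once that cancellation is confirmed, the induction is routine.
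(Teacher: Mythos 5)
Your proof is correct, but it takes a genuinely different route from the paper's. The paper substitutes $r=2^\zeta m+\eta$ into \eqref{tspr}, splits the product as the $\eta$-part times the extra factor
$\frac{f_2^{3\cdot 2^\zeta m} f_t^{2^{\zeta+1}m} f_{4t}^{2^\zeta m}}{f_1^{2^{\zeta+1}m} f_4^{2^\zeta m} f_{2t}^{3\cdot 2^\zeta m}}$,
and kills that factor in one stroke by invoking \eqref{bt} at the full level $p=2$, $l=\zeta+1$ (so $f_{2k}^{2^\zeta}\equiv f_k^{2^{\zeta+1}}\pmod{2^{\zeta+1}}$), after which the numerator and denominator both reduce to $f_1^{6\cdot 2^\zeta m} f_t^{6\cdot 2^\zeta m}$ and the factor is $\equiv 1\pmod{2^{\zeta+1}}$. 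You instead use \eqref{bt} only in its weakest instance ($l=1$, the Frobenius congruence $f_{2k}\equiv f_k^2\pmod2$) to establish $G\equiv1\pmod2$ --- where the very same cancellation to $f_1^6 f_t^6$ appears, just at level $2$ rather than $2^{\zeta+1}$ --- and then recover the higher levels by the elementary $2$-adic squaring induction $G^{2^{\zeta+1}}=\bigl(1+2^{\zeta+1}K_\zeta\bigr)^2\equiv1\pmod{2^{\zeta+2}}$. In effect you re-derive, for the specific eta-quotient $G$, the higher-power content of \eqref{bt} that the paper cites wholesale (and which is itself standardly proved by exactly your squaring argument). Your route buys self-containedness --- only the mod-$2$ congruence and the invertibility of a power series with constant term $1$ are needed, and you isolate the clean intermediate statement $G^{2^\zeta}\equiv1\pmod{2^{\zeta+1}}$ --- while the paper's buys brevity, since applying the lemma at level $l=\zeta+1$ performs the lifting for free. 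Your attention to the two real subtleties (that numerator and denominator reduce to the \emph{same} series $f_1^6f_t^6$, and that dividing by a unit power series preserves the congruence) is exactly right, and the factorization $G^{2^\zeta m+\eta}-G^\eta=G^\eta\bigl(G^{2^\zeta m}-1\bigr)$ with $G^\eta\in\mathbb{Z}[[q]]$ correctly handles general $m$ and $\eta$.
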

		\begin{proof}
			Substituting $r = 2^\zeta m + \eta$ in \eqref{tspr} yields
			\begin{align}
				\sum_{n\geq 0}\overline{S}_t^{2^\zeta m + \eta}(n) q^n 
				&= \frac{f_2^{3\cdot2^\zeta m } f_{t}^{2^{\zeta+1} m } f_{4t}^{2^\zeta m }}{f_1^{2^{\zeta+1} m } f_4^{2^\zeta m} f_{2t}^{3\cdot 2^\zeta m}}\cdot
				\frac{f_2^{3\eta } f_{t}^{2\eta } f_{4t}^{\eta}}{f_1^{2\eta} f_4^{\eta} f_{2t}^{3\eta}}.
			\end{align}
			Thanks to \eqref{bt}, we get 
			\begin{align}
				\sum_{n\geq 0}\overline{S}_t^{2^\zeta m + \eta}(n) q^n \equiv 
				\frac{f_2^{3\eta } f_{t}^{2\eta } f_{4t}^{\eta}}{f_1^{2\eta} f_4^{\eta} f_{2t}^{3\eta}}\pmod{2^{\zeta + 1}}.
			\end{align}
			Comparing coefficients yields the result.
		\end{proof}
		\begin{thm}For any positive integer $n$, we have
			\begin{align}
				&\sum_{n \geq 0}\overline{S_9}(6n+1)q^n=2\frac{f_2^6f_{3}^4}{f_1^8f_6^2},\label{res92}\\
				&\sum_{n \geq 0}\overline{S_9}(6n+3)q^n=4\frac{f_2^5f_{3}f_6}{f_1^7},\label{res93}\\
				&\sum_{n \geq 0}\overline{S_9}(6n+5)q^n=8\frac{f_2^4f_{6}^4}{f_1^6f_3^2}.\label{res94}
			\end{align}
		\end{thm}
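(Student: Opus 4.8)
The plan is to take the generating function \eqref{tsp} with $t=9$, namely
\[
\sum_{n\ge0}\overline{S_9}(n)q^n=\frac{f_2^3 f_9^2 f_{36}}{f_1^2 f_4 f_{18}^3},
\]
and to extract the three progressions in two stages. Since $6n+1,6n+3,6n+5$ are exactly the odd residues modulo $6$, I would first perform a $2$-dissection to isolate the odd-index coefficients, and then a $3$-dissection of the resulting series to split those odd residues into the three classes modulo $6$.

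For the first stage, I would observe that every factor on the right-hand side except $f_9^2/f_1^2$ is a function of $q^2$, so the odd part of the full series is governed entirely by the odd part of $f_9^2/f_1^2$. Squaring the $2$-dissection \eqref{f9byf1} and retaining only the cross term yields the odd part $2q\,f_4^2 f_{12}^2 f_{18}/f_2^5$; multiplying by the (even) prefactor $f_2^3 f_{36}/(f_4 f_{18}^3)$, dividing by $q$, and replacing $q^2\to q$ produces the clean intermediate identity
\[
\sum_{n\ge0}\overline{S_9}(2n+1)q^n=2\,\frac{f_2 f_6^2 f_{18}}{f_1^2 f_9^2}.
\]

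For the second stage, I would note that in this new series every factor except $f_2/f_1^2$ is a function of $q^3$, so the residue classes modulo $3$ are controlled by the $3$-dissection \eqref{f2byf12} of $f_2/f_1^2$. Since $2(3n+r)+1=6n+2r+1$, the $q^0$, $q^1$, and $q^2$ terms correspond respectively to $6n+1$, $6n+3$, and $6n+5$; extracting each term, dividing by the appropriate power of $q$, and replacing $q^3\to q$ gives the three stated identities after routine simplification of the $f_k$ exponents. The main obstacle is the first stage: because $f_9^2/f_1^2$ carries both parities, one cannot read off the odd part directly, and the key device is to $2$-dissect $f_9/f_1$ via \eqref{f9byf1} and square, keeping only the mixed term. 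Once the intermediate form $2 f_2 f_6^2 f_{18}/(f_1^2 f_9^2)$ is in hand, the remaining $3$-dissection is entirely mechanical.
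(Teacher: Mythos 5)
Your proposal is correct and takes essentially the same route as the paper: substitute $t=9$ into \eqref{tsp}, square the $2$-dissection \eqref{f9byf1} and keep the odd (cross) term to get the intermediate series for $\overline{S_9}(2n+1)$, then apply the $3$-dissection \eqref{f2byf12} to split off the classes $6n+1$, $6n+3$, $6n+5$. Note that your intermediate identity $\sum_{n\ge0}\overline{S_9}(2n+1)q^n = 2\,\frac{f_2 f_6^2 f_{18}}{f_1^2 f_9^2}$ correctly carries the factor $2$, which the paper's equation \eqref{tsp91} omits by an apparent typo (the stated results \eqref{res92}--\eqref{res94} require it, as does the value $\overline{S_9}(1)=2$).
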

		\begin{proof}
			Substituting $t=9$ in \eqref{tsp}, we have
			\begin{equation}\label{tsp9}
				\sum_{n \geq 0} \overline{S_9}(n) q^n=  \frac{f_2^3 f_{9}^2 f_{36}}{f_1^2 f_4 f_{18}^3}.
			\end{equation}
			Using \eqref{f9byf1} in \eqref{tsp9} and collecting the coefficients of $q^{2n+1}$, we obtain
			
			\begin{equation}\label{tsp91}
				\sum_{n \geq 0} \overline{S_9}(2n+1) q^n = \frac{f_2 f_{6}^2 f_{18}}{f_1^2 f_{9}^2}.
			\end{equation}
			Using \eqref{f2byf12} in \eqref{tsp91} and extracting terms involving the powers $q^{3n}$, $q^{3n+1}$, and $q^{3n+2}$ yields \eqref{res92}, \eqref{res93}, and \eqref{res94}, respectively. 
		\end{proof}
		\begin{thm}For any positive integer $n$ and $\alpha\geq0$, we have
			\begin{align}
				&\overline{S_9}(12n+7)\equiv 0 \pmod{8},\label{res911} \\
				&\overline{S_9}(6n+3)\equiv(-1)^{\alpha}\overline{S_9}(3^{\alpha}(6n+3))\pmod{16},\label{res914}\\
				&\overline{S_9}(6n+5)\equiv\overline{S_9}(3^\alpha(6n+5)) \pmod{16},\label{res913} \\
				&\overline{S_9}(24n+23)\equiv 0 \pmod{32}.\label{res9141}         \end{align}
		\end{thm}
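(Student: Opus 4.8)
The plan is to derive all four congruences from the three explicit generating functions \eqref{res92}--\eqref{res94} for the residue classes $6n+1,\,6n+3,\,6n+5$, combined with the elementary congruence \eqref{bt} and the $3$-dissection \eqref{f1f2}. I would dispose of \eqref{res911} first. Since $12n+7=6(2n+1)+1$, \eqref{res92} shows that $\overline{S_9}(12n+7)$ equals $2$ times the coefficient of $q^{2n+1}$ in $\frac{f_2^6f_3^4}{f_1^8f_6^2}$. By \eqref{bt} one has $f_1^8\equiv f_2^4\pmod 8$ and $f_3^4\equiv f_6^2\pmod 4$, so $\frac{f_2^6f_3^4}{f_1^8f_6^2}\equiv f_2^2\pmod 4$. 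As $f_2^2$ is a power series in $q^2$, every coefficient of an odd power of $q$ is $\equiv 0\pmod 4$; multiplying by the external factor $2$ yields $\overline{S_9}(12n+7)\equiv 0\pmod 8$.

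For \eqref{res914} and \eqref{res913} I would first simplify the two generating functions modulo the relevant power of $2$. Using \eqref{res93} and the reduction $\frac{1}{f_1^7}=\frac{f_1}{f_1^8}\equiv\frac{f_1}{f_2^4}\pmod 8$ coming from \eqref{bt}, one obtains $\sum_n\overline{S_9}(6n+3)q^n\equiv 4\,f_1f_2f_3f_6\pmod{16}$; writing $\sum_n c(n)q^n=f_1f_2f_3f_6$ this reads $\overline{S_9}(6n+3)\equiv 4c(n)\pmod{16}$. Similarly \eqref{res94} together with $f_2\equiv f_1^2$ and $f_6\equiv f_3^2\pmod 2$ gives $\sum_n\overline{S_9}(6n+5)q^n\equiv 8\,f_1^2f_3^6\pmod{16}$, i.e. $\overline{S_9}(6n+5)\equiv 8d(n)\pmod{16}$ with $\sum_n d(n)q^n=f_1^2f_3^6$. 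Because $f_3,f_6,f_9,f_{18}$ are series in $q^3$, multiplying \eqref{f1f2} by $f_3f_6$ produces the full $3$-dissection of $f_1f_2f_3f_6$; extracting the residues $1$ and $2$ modulo $3$ yields the \emph{exact} identities $c(3n+1)=-c(n)$ and $c(3n+2)=-2e(n)$, where $\sum_n e(n)q^n=\frac{f_1^2f_6^4}{f_3^2}$.

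The combinatorial input is the index bookkeeping: $3^\alpha(6n+3)=6m_\alpha+3$ with $m_0=n$ and $m_{\alpha+1}=3m_\alpha+1$, while $3(6n+5)=6(3n+2)+3$ drops into the class $6m+3$. From $c(3n+1)=-c(n)$ we get $\overline{S_9}(6(3m+1)+3)\equiv 4c(3m+1)=-4c(m)\equiv-\overline{S_9}(6m+3)\pmod{16}$, and iterating gives $\overline{S_9}(3^\alpha(6n+3))\equiv(-1)^\alpha\overline{S_9}(6n+3)\pmod{16}$, which is \eqref{res914}. For \eqref{res913} the case $\alpha=0$ is trivial; for $\alpha\ge1$ I would use $c(3n+2)=-2e(n)$ together with $\frac{f_1^2f_6^4}{f_3^2}\equiv f_1^2f_3^6\pmod 2$ (again from \eqref{bt}), i.e. $e(n)\equiv d(n)\pmod 2$, to obtain $\overline{S_9}(3(6n+5))\equiv 4c(3n+2)=-8e(n)\equiv 8d(n)\equiv\overline{S_9}(6n+5)\pmod{16}$. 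The remaining iterates live in the class $6m+3$, so each further factor of $3$ only contributes a sign via \eqref{res914}; but every value here is $\equiv 8d(n)\pmod{16}$ and $8\equiv-8\pmod{16}$, so all signs collapse and the sign-free congruence \eqref{res913} survives.

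Finally, for \eqref{res9141} note $24n+23=6(4n+3)+5$, so by \eqref{res94} the quantity $\overline{S_9}(24n+23)$ is $8$ times the coefficient of $q^{4n+3}$ in $\frac{f_2^4f_6^4}{f_1^6f_3^2}$. Reducing with \eqref{bt} via $f_1^6\equiv f_2^2f_1^2\pmod 4$ gives $\frac{f_2^4f_6^4}{f_1^6f_3^2}\equiv\frac{f_2^2f_6^4}{f_1^2f_3^2}\pmod4$, and I would $2$-dissect this by multiplying \eqref{1byf12f32} by $f_2^2f_6^4$. The first and third resulting pieces are series in $q^2$ and contribute nothing to the odd power $q^{4n+3}$; only the middle piece $2q\frac{f_4^4f_{12}^4}{f_2^4f_6^2}$ survives. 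Hence the coefficient of $q^{4n+3}$ equals $2$ times the coefficient of $q^{4n+2}$ in the series $\frac{f_4^4f_{12}^4}{f_2^4f_6^2}$ in $q^2$; setting $Q=q^2$ and reducing modulo $2$ turns this into a series in $Q^2$, whose coefficient at the odd power $Q^{2n+1}$ vanishes modulo $2$. Thus the coefficient of $q^{4n+3}$ is $\equiv0\pmod 4$, and the external factor $8$ upgrades this to $\overline{S_9}(24n+23)\equiv0\pmod{32}$. The two delicate points I expect are the sign cancellation in \eqref{res913}, which works only because the relevant values are multiples of $8$ and $8\equiv-8\pmod{16}$, and the residue bookkeeping in \eqref{res9141}, where one must correctly identify which dissection pieces feed the class $q^{4n+3}$; the $q$-series manipulations themselves are routine once \eqref{bt}, \eqref{f1f2} and \eqref{1byf12f32} are in hand.
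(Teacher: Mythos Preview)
Your proof is correct and follows the same overall strategy as the paper: reduce each of the generating functions \eqref{res92}--\eqref{res94} modulo the appropriate power of~$2$ via \eqref{bt}, then perform a $2$- or $3$-dissection to isolate the desired arithmetic progression. The differences are in the choice of dissection lemma. For \eqref{res914} and \eqref{res913} the paper reduces to $4\,\dfrac{f_2^3 f_3 f_6}{f_1^3}\pmod{16}$ and applies the $3$-dissection \eqref{f23byf13} of $f_2^3/f_1^3$, obtaining directly $\sum\overline{S_9}(18n+9)q^n\equiv -4\,\dfrac{f_2^3f_3f_6}{f_1^3}$ and $\sum\overline{S_9}(18n+15)q^n\equiv 8f_2f_6^3\pmod{16}$; you instead reduce one step further to $4f_1f_2f_3f_6$ and use the $3$-dissection \eqref{f1f2} of $f_1f_2$, which gives the clean exact recursion $c(3n+1)=-c(n)$. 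For \eqref{res9141} the paper multiplies by $f_3^2/f_1^2$ and applies \eqref{f3.2_f1.2} to land on $\sum\overline{S_9}(12n+11)q^n\equiv 16f_4f_6^3\pmod{32}$, from which the result is immediate since $f_4f_6^3$ is a series in $q^2$; you instead invoke \eqref{1byf12f32} and then need a second parity step (reducing $\frac{f_4^4f_{12}^4}{f_2^4f_6^2}$ modulo~$2$) to finish. Both routes are valid; the paper's choices are slightly more economical in that they avoid the extra $\bmod\,2$ reductions, while your use of \eqref{f1f2} yields an exact self-recursion for $c(n)$ that makes the iteration for \eqref{res914} particularly transparent. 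Your explicit handling of the sign collapse in \eqref{res913} for $\alpha\ge 2$ (via $8\equiv -8\pmod{16}$) is a point the paper leaves implicit.
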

		\begin{proof}
			Thanks to \eqref{bt}, \eqref{res92} reduces to 
			\begin{align}
				&\sum_{n \geq 0}\overline{S_9}(6n+1)q^n\equiv 2f_2^2 \pmod{8}\label{res921}.
			\end{align}
			Congruence \eqref{res911} follows from above equation. Now using \eqref{bt} in \eqref{res94}, we get
			\begin{align}
				&\sum_{n \geq 0}\overline{S_9}(6n+5)q^n\equiv 8f_2f_6^3 \pmod{16}.\label{res922}
			\end{align}
			Thanks to \eqref{bt}, \eqref{res93} reduces to
			\begin{align}
				&\sum_{n \geq 0}\overline{S_9}(6n+3)q^n\equiv4\frac{f_2^3f_{3}f_6}{f_1^3}\pmod{16}\label{res931}
			\end{align}
			Using \eqref{f23byf13} in the above equation and extracting terms involving the powers $q^{3n}$, $q^{3n+1}$ and $q^{3n+2}$ respectively, we get
			\begin{align}
				&\sum_{n \geq 0}\overline{S_9}(18n+3)q^n\equiv4f_2^2 \pmod{16},\label{res932}\\
				&\sum_{n \geq 0}\overline{S_9}(18n+9)q^n\equiv12\frac{f_2^5f_{3}^5}{f_1^7f_6}\equiv -4\frac{f_2^3f_{3}f_6}{f_1^3}\pmod{16},\label{res933}\\
				&\sum_{n \geq 0}\overline{S_9}(18n+15)q^n\equiv8f_2f_{6}^3\pmod{16}.\label{res934}
			\end{align}    
			Congruence \eqref{res914} follows from \eqref{res931} and \eqref{res933}. Congruence \eqref{res913} follows from \eqref{res922} and \eqref{res934}.  Using \eqref{bt} in \eqref{res94}, we have
			\begin{align}
				&\sum_{n \geq 0}\overline{S_9}(6n+5)q^n\equiv8\frac{f_2^2f_{6}^4}{f_1^2f_3^2}\pmod{32}.\label{res941}
			\end{align}
			Using \eqref{f3.2_f1.2} and extracting the terms containing $q^{2n+1}$ from both sides of the resulting equation, we get
			\begin{align}
				&\sum_{n \geq 0}\overline{S_9}(12n+11)q^n\equiv16\frac{f_2^4 f_{6}^4}{f_1^4f_3^2}\equiv 16f_4^{}f_6^3\pmod{32}.\label{res9412}
			\end{align}
			Congruence \eqref{res9141} follows from above equation.
		\end{proof}
		
		\begin{thm} \label{mains9}
			Let $p\ge 5$ be a prime and $\left(\frac{-3}{p}\right)=-1$, $\alpha \geq 0$, and $n \geq 0$, we have
			\begin{equation}
				\overline{S_9}\big( 6p^{2\alpha+2}n + (6i + p)p^{2\alpha+1} \big) \equiv 0 \pmod{3},\label{44a1}
			\end{equation}
			where $i\in\{1,2,\dots,p-1\}$.
			
		\end{thm}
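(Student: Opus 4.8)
The plan is to start from the reduction of $\overline{S_9}(n)$ modulo $3$ and isolate the generating function for the relevant arithmetic progression, then apply the prime dissection of Lemma~\ref{a1} to extract a vanishing residue class. First I would reduce the generating function \eqref{tsp9} modulo $3$: using \eqref{bt} with $p=3$, we have $f_9 \equiv f_3^3 \pmod 3$ and $f_{18}\equiv f_6^3\pmod 3$ and $f_{36}\equiv f_{12}^3\pmod 9$, so the right-hand side of \eqref{tsp9} simplifies considerably modulo $3$. I expect this to collapse to something like $\sum_{n\ge0}\overline{S_9}(n)q^n \equiv \tfrac{f_2^3 f_3^6 f_{12}^3}{f_1^2 f_4 f_6^9}\pmod 3$, which I would then further massage (again via \eqref{bt}) into a clean product involving a single copy of $f_1$ or $f_3$ whose theta-dissection is governed by the Legendre-symbol condition $\left(\frac{-3}{p}\right)=-1$.

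Next I would locate where the factor $f_1$ (or $f_3$, after rescaling $q\mapsto q^{?}$) sits, and apply the $p$-dissection \eqref{a17} from Lemma~\ref{a1}. The key arithmetic input is that the exponents $\tfrac{3k^2+k}{2}$ appearing in \eqref{a17} run over a set of residues modulo $p$ that, under the hypothesis $\left(\frac{-3}{p}\right)=-1$, miss certain classes; specifically $3k^2+k$ represents exactly the residues for which $-3$ is a nonresidue obstructs. Completing the square, $24\left(\tfrac{3k^2+k}{2}\right)+1 = (6k+1)^2\cdot\text{(something)}$ ties the reachable exponents to squares times $3$, and the condition $\left(\frac{-3}{p}\right)=-1$ forces the progression $6i+p \pmod{?}$ (equivalently the class captured by $i\in\{1,\dots,p-1\}$) to be unreachable. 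I would therefore match the target index $(6i+p)p^{2\alpha+1}$ against the exponent bookkeeping so that the coefficient it selects comes entirely from terms that are absent in the dissection, giving the congruence to $0\pmod 3$ at level $\alpha=0$.

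To promote $\alpha=0$ to general $\alpha$, I would set up an induction on $\alpha$ using self-similarity of the dissection: after extracting the surviving $p$-dissection piece (the diagonal term $(-1)^{(\pm p-1)/6}q^{(p^2-1)/24}f_{p^2}$ in \eqref{a17}), the generating function for the sub-progression $\overline{S_9}(p^2 n + c)$ for the appropriate $c$ returns, up to an overall constant and a substitution $q\mapsto q$, to a function of the same shape as the base case. This lets the same dissection argument apply at each level, multiplying the modulus-by-$p^2$ scaling in the index $6p^{2\alpha+2}n+(6i+p)p^{2\alpha+1}$ and preserving the $\pmod 3$ vanishing through each step.

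The main obstacle will be the exponent bookkeeping in the second paragraph: correctly tracking how the factor of $q^{(p^2-1)/24}$ (and any analogous shifts coming from the other $f_k$ in the product) combines with the base index so that the target class $(6i+p)p^{2\alpha+1}$ lands precisely on a \emph{missing} term of \eqref{a17} rather than on the surviving diagonal term, and verifying that the Legendre condition $\left(\frac{-3}{p}\right)=-1$ is exactly what guarantees this. A secondary subtlety is ensuring the mod-$3$ reduction of \eqref{tsp9} is clean enough that only one theta-factor needs dissecting; if residual factors of $f_2,f_4,f_6$ survive, I would need to check they do not interfere with the residue classes selected, which may require a preliminary $2$-dissection or a change of variable $q\mapsto q^2$ before invoking Lemma~\ref{a1}.
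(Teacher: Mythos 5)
Your high-level toolkit is the right one (reduction via \eqref{bt}, the Cui--Gu dissection \eqref{a17}, and a self-similarity induction on $\alpha$ --- that last part matches the paper's closing step exactly), but your first move fails and with it the whole route as written. The target indices satisfy $(6i+p)p^{2\alpha+1}\equiv p^{2\alpha+2}\equiv 1\pmod 6$, so the needed object is the generating function of $\overline{S_9}(6n+1)$, and the paper gets it from the previously proved identity \eqref{res92}: $\sum_{n\ge0}\overline{S_9}(6n+1)q^n = 2f_2^6f_3^4/(f_1^8f_6^2)$, itself obtained from \eqref{tsp9} by the $2$-dissection \eqref{f9byf1} followed by the $3$-dissection \eqref{f2byf12}. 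Only for \emph{this} eta-quotient does \eqref{bt} make every denominator cancel, yielding the clean product $\overline{S_9}(6n+1)\equiv 2f_1f_3\pmod 3$ as in \eqref{44a2}. Your direct mod-$3$ reduction of \eqref{tsp9} instead gives $f_2^3f_3^6f_{12}^3/(f_1^2f_4f_6^9)$, which retains denominator factors $f_1^2, f_4, f_6^9$; this is not a product of theta functions, Lemma~\ref{a1} cannot be applied to it, and no change of variable or further application of \eqref{bt} modulo $3$ removes those denominators. You flag exactly this danger yourself (``if residual factors of $f_2,f_4,f_6$ survive''), but the resolution is not an optional preliminary massage --- it is the specific prerequisite dissection \eqref{res92}, without which the argument does not start.

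The second gap is in the dissection mechanism. You describe applying \eqref{a17} to ``a single copy of $f_1$ or $f_3$'' and arguing that certain residue classes are missed by the exponents $\frac{3k^2+k}{2}$. For a single factor that criterion would amount to asking whether $24r+1$ is a quadratic residue modulo $p$, a condition that varies with the residue class $r$ and cannot produce uniform vanishing over all $i\in\{1,\dots,p-1\}$. What the paper actually does is dissect \emph{both} factors of $f_1f_3$ simultaneously (the second via $q\mapsto q^3$ in \eqref{a17}) and study the two-variable congruence $\frac{3m^2+m}{2}+3\cdot\frac{3k^2+k}{2}\equiv\frac{p^2-1}{6}\pmod p$, equivalently $(6m+1)^2+3(6k+1)^2\equiv 0\pmod p$. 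The hypothesis $\left(\frac{-3}{p}\right)=-1$ enters precisely here: it forces $p\mid(6m+1)$ and $p\mid(6k+1)$, so in the classes $pn+\frac{p^2-1}{6}$ only the diagonal term $q^{(p^2-1)/6}f_{p^2}f_{3p^2}$ survives, giving $c\left(p^2n+\frac{p^2-1}{6}\right)=c(n)$ together with $c\left(p^2n+pi+\frac{p^2-1}{6}\right)=0$ for $1\le i\le p-1$, after which your induction-and-translation step (replacing $n$ and checking $6\left(p^{2\alpha+2}n+p^{2\alpha+1}i+\frac{p^{2\alpha+2}-1}{6}\right)+1=6p^{2\alpha+2}n+(6i+p)p^{2\alpha+1}$) goes through as you envisioned. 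So: right scaffolding, but the two load-bearing steps --- the reduction to $f_1f_3$ via \eqref{res92} and the binary quadratic form argument --- are missing or mis-specified.
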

		\begin{proof}
			Thanks to \eqref{bt}, \eqref{res92} reduces to
			\begin{align}\label{44a2}
				\sum_{n \geq 0}\overline{S_9}(6n+1)q^n\equiv 2f_1f_3\pmod{3}.
			\end{align}
			Define
			\begin{equation}
				\sum_{n=0}^{\infty} c(n) q^n = f_{1} f_{3}.\label{44a3}
			\end{equation}
			Combining \eqref{44a2} and \eqref{44a3}  we find that
			\begin{equation}
				\overline{S_9}(6n+1) \equiv 2 c(n) \pmod{3},\label{44a4}
			\end{equation}
			Now, we consider the congruence equation
			\begin{equation}
				\frac{3m^{2} + m}{2} + 3 \cdot \frac{3k^{2} + k}{2} \equiv \frac{4p^{2} - 4}{24} \pmod{p},\label{44a5}
			\end{equation}
			which is equivalent to
			\begin{equation}
				(6m + 1)^{2} +3 (6k + 1)^{2} \equiv 0 \pmod{p}.\label{44a6}
			\end{equation}
			where $-\frac{(p-1)}{2} \leq m,k \leq \frac{p-1}{2}$ and $p$ is a prime such that $\left( \frac{-3}{p} \right) = -1.$
			Since $\left( \frac{-3}{p} \right) = -1$, the congruence relation \eqref{44a5} holds if and only if 
			$m, k = \pm \frac{p-1}{6}$. Therefore, if we substitute \eqref{a17} into \eqref{44a3} and then extract the terms in which the powers of $q$ are $pn + \frac{4p^{2} - 4}{24} = pn + \frac{p^2 - 1}{6}$, we arrive at
			\begin{equation}
				\sum_{n=0}^{\infty} c\!\left( pn + \frac{p^2 - 1}{6} \right) q^{pn + \frac{p^2 - 1}{6}}
				= (-1)^{\frac{p-1}{6} + \frac{p-1}{6}}\, q^{\frac{p^2 - 1}{6}} f_{p^2} f_{3p^{2}}.\label{44a7}
			\end{equation}
			Dividing both sides of \eqref{44a7} by $q^{\frac{p^2-1}{6}}$ and then replacing $q^p$ by $q$, we obtain
			\begin{equation}
				\sum_{n=0}^{\infty} c\!\left( pn + \frac{p^2 - 1}{6} \right) q^{n}
				= (-1)^{2\times \frac{p-1}{6}} f_{p} f_{3p},\label{44a8}
			\end{equation}
			which implies that
			\begin{equation}
				\sum_{n=0}^{\infty} c\!\left( p^{2}n + \frac{p^2 - 1}{6} \right) q^{n}
				= f_{1} f_{3}.\label{44a9}
			\end{equation}
			and for $n \geq 0$,
			\begin{equation}
				c\left( p^{2}n + pi + \frac{p^2 - 1}{6} \right) = 0,\label{44a10}
			\end{equation}
			where $i\in\{1,2,\dots,p-1\}$. Combining \eqref{44a3} and \eqref{44a9}, we see that for $n \geq 0$,
			
			\begin{equation}
				c\left( p^{2}n + \frac{p^2 - 1}{6} \right) = c(n).\label{44a11}
			\end{equation}
			Iterating \eqref{44a11} shows that, for all $\alpha \geq 0$,
			\begin{equation}
				c\left( p^{2\alpha}n + \frac{p^{2\alpha} - 1}{6} \right) = c(n).\label{44a12}
			\end{equation}
			Replacing $n$ by $p^2 n + p i + \frac{p^2 - 1}{6}$ in \eqref{44a12} and using \eqref{44a10}, we find that for $n \geq 0$ and $\alpha \geq 0$,
			\begin{equation}
				c\left( p^{2\alpha+2} n + p^{2\alpha+1} i + \frac{p^{2\alpha+2} - 1}{6} \right) = 0.
			\end{equation}
			Again, replacing $n$ by $p^{2\alpha+2}n + p^{2\alpha+1} i + \frac{p^{2\alpha+2} - 1}{6}
			\quad (i = 1, 2, \ldots, p - 1)$
			in \eqref{44a4}, we arrive at \eqref{44a1}.
		\end{proof}
		\begin{thm}
			For any prime $p \equiv 5 \pmod{6}$, $\alpha \geq 0$, and $n \geq 0$, we have
			\begin{equation}
				\overline{S_9}\big( 6p^{2\alpha+2}n + (6i + 3p)p^{2\alpha+1} \big) \equiv 0 \pmod{3},\label{4a1}
			\end{equation}
			where $i\in\{1,2,\dots,p-1\}$.
		\end{thm}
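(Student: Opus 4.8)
The plan is to mirror the argument of Theorem~\ref{mains9}, with $f_1f_3$ replaced by a theta product governed by the dissection \eqref{a2} of $\psi$. First I would reduce \eqref{res93} modulo $3$. Since $4\equiv1\pmod3$ and, by \eqref{bt}, $f_3\equiv f_1^3$ and $f_6\equiv f_2^3\pmod3$,
\[
\sum_{n\ge0}\overline{S_9}(6n+3)q^n=4\,\frac{f_2^5f_3f_6}{f_1^7}\equiv\frac{f_2^8}{f_1^4}=\psi(q)^4\pmod3.
\]
The decisive simplification is the Frobenius congruence $\psi(q)^3\equiv\psi(q^3)\pmod3$ (immediate from $\psi(q)=\sum_{k\ge0}q^{k(k+1)/2}$), which gives
\[
\sum_{n\ge0}\overline{S_9}(6n+3)q^n\equiv\psi(q)\,\psi(q^3)\pmod3.
\]
I would then set $\sum_{n\ge0}d(n)q^n=\psi(q)\psi(q^3)$, so that $\overline{S_9}(6n+3)\equiv d(n)\pmod3$, and the task becomes locating the zeros of $d$.

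Next I would substitute the $p$-dissection \eqref{a2} for both $\psi(q)$ and $\psi(q^3)$ and track exponents modulo $p$. A generic term indexed by $m$ in $\psi(q)$ contributes exponents $\equiv\frac{m^2+m}{2}\pmod p$ (its theta factor $f(q^a,q^b)$ has $a\equiv b\equiv0\pmod p$), the generic term indexed by $k$ in $\psi(q^3)$ contributes $\equiv 3\cdot\frac{k^2+k}{2}\pmod p$, while the two distinguished terms carry $\frac{p^2-1}{8}$ and $\frac{3(p^2-1)}{8}$, summing to exactly $\frac{p^2-1}{2}$. I would extract the terms whose $q$-power is $\equiv\frac{p^2-1}{2}\pmod p$; completing the square, the relevant congruence
\[
\frac{m^2+m}{2}+3\cdot\frac{k^2+k}{2}\equiv\frac{p^2-1}{2}\pmod p
\]
is equivalent to $(2m+1)^2+3(2k+1)^2\equiv0\pmod p$, and the same manipulation settles the two mixed generic$\times$distinguished cases (each forcing $2m+1\equiv0$ or $2k+1\equiv0\pmod p$).

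Here the hypothesis is used decisively: for $p\ge5$ the condition $p\equiv5\pmod6$ is precisely $\left(\frac{-3}{p}\right)=-1$, so $(2m+1)^2+3(2k+1)^2\equiv0\pmod p$ forces $2m+1\equiv2k+1\equiv0\pmod p$, impossible in the admissible ranges $0\le m,k\le\frac{p-3}{2}$. Hence only the product of the two distinguished terms survives, and extracting powers $\equiv\frac{p^2-1}{2}\pmod p$ gives
\[
\sum_{n\ge0}d\!\left(pn+\tfrac{p^2-1}{2}\right)q^{pn+\frac{p^2-1}{2}}=q^{\frac{p^2-1}{2}}\,\frac{f_{2p^2}^2f_{6p^2}^2}{f_{p^2}f_{3p^2}}.
\]
Dividing by $q^{(p^2-1)/2}$ and replacing $q^p$ by $q$ yields $\sum_{n\ge0}d\!\left(pn+\frac{p^2-1}{2}\right)q^n=\psi(q^p)\psi(q^{3p})$; as the right side is a series in $q^p$, the coefficients vanish unless $p\mid n$, giving $d\!\left(p^2n+pi+\frac{p^2-1}{2}\right)=0$ for $i=1,\dots,p-1$, and a second extraction gives the self-similar relation $d\!\left(p^2n+\frac{p^2-1}{2}\right)=d(n)$. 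Iterating the latter to $d\!\left(p^{2\alpha}n+\frac{p^{2\alpha}-1}{2}\right)=d(n)$, substituting $n\mapsto p^2n+pi+\frac{p^2-1}{2}$, and translating back through $\overline{S_9}(6N+3)\equiv d(N)\pmod3$ with $N=p^{2\alpha+2}n+p^{2\alpha+1}i+\frac{p^{2\alpha+2}-1}{2}$ (so that $6N+3=6p^{2\alpha+2}n+(6i+3p)p^{2\alpha+1}$) delivers \eqref{4a1}. The main obstacle is the clean reduction to $\psi(q)\psi(q^3)$ together with the exponent bookkeeping that pins the surviving contribution to the offset $\frac{p^2-1}{2}$; once these are in place, the quadratic-residue step and the iteration proceed exactly as in Theorem~\ref{mains9}.
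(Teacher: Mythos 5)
Your proposal is correct and follows essentially the same route as the paper: the paper reduces \eqref{res93} via \eqref{bt} to $4\psi(q)\psi(q^3)\pmod 3$ and then invokes the argument of Theorem~\ref{mains9} with the $\psi$-dissection \eqref{a2}, which is exactly the reduction, the congruence $(2m+1)^2+3(2k+1)^2\equiv 0\pmod p$, and the iteration you carry out. Your write-up merely makes explicit the details the paper leaves to the analogy (including the correct surviving offset $\frac{p^2-1}{2}$ and the translation $6N+3=6p^{2\alpha+2}n+(6i+3p)p^{2\alpha+1}$), all of which check out.
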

		\begin{proof}  
			Thanks to \eqref{bt}, \eqref{res93} reduces to 
			\begin{align}
				&\sum_{n \geq 0}\overline{S_9}(6n+3)q^n\equiv4\frac{f_2^5f_{3}f_6}{f_1^7}\equiv4\psi(q)\psi(q^3)\pmod{3},\label{res93p}
			\end{align}
			
			The proof is similar to the proof of Theorem \ref{mains9}, where \eqref{res93p} is used.
		\end{proof}
		\begin{thm}
			Let $p\ge 5$ be a prime with $\left(\frac{-18}{p}\right)=-1$, and let $\alpha, n\geq 0$. Then 
			\begin{equation}
				\overline{S}_9^{16m + 1}\left ( 24p^{2\alpha +2} n+\left (24i+ 11p\right)p^{2\alpha +1} \right )\equiv0\pmod{32} ,\label{44b1}
			\end{equation}
			where $i\in\{1,2,\dots,p-1\}$.
		\end{thm}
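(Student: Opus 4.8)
The plan is to peel off the parameter $16m+1$ and then imitate the $p$-dissection argument of Theorem~\ref{mains9}. First, \eqref{mainth} with $\zeta=4$ and $\eta=1$ gives $\overline{S}_9^{16m+1}(M)\equiv\overline{S_9}(M)\pmod{32}$ for every $M$, so it suffices to prove the congruence for $\overline{S_9}$. Next I invoke \eqref{res9412}, namely $\sum_{n\ge0}\overline{S_9}(12n+11)q^n\equiv16\,f_4f_6^3\pmod{32}$. Since $f_4f_6^3$ involves only even powers of $q$, its odd part vanishes, and replacing $q^2$ by $q$ in the even part yields $\sum_{n\ge0}\overline{S_9}(24n+11)q^n\equiv16\,f_2f_3^3\pmod{32}$. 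Put $\sum_{n\ge0}b(n)q^n:=f_2f_3^3$. Because $p^2\equiv1\pmod{24}$, the argument $24p^{2\alpha+2}n+(24i+11p)p^{2\alpha+1}$ equals $24N+11$ with $N=p^{2\alpha+2}n+ip^{2\alpha+1}+\tfrac{11(p^{2\alpha+2}-1)}{24}$, so the whole problem reduces to showing $b(N)\equiv0\pmod2$.

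Working modulo $2$, Jacobi's identity gives $f_3^3\equiv\psi(q^3)\pmod2$, hence $f_2f_3^3\equiv f_2\,\psi(q^3)\pmod2$. I then substitute the dissection \eqref{a17} of $f_1$ (with $q\mapsto q^2$) for $f_2$ and the dissection \eqref{a2} of $\psi$ (with $q\mapsto q^3$) for $\psi(q^3)$. A generic summand of the former carries exponent $3k^2+k$ and a generic summand of the latter carries exponent $\tfrac{3(m^2+m)}2$; the crucial elementary computation is that their sum lies in the residue class $\tfrac{11(p^2-1)}{24}\pmod p$ exactly when
\[
2(6k+1)^2+9(2m+1)^2\equiv0\pmod p.
\]
Here the hypothesis enters, since $\left(\tfrac{-18}{p}\right)=\left(\tfrac{-2}{p}\right)=-1$ forces $6k+1\equiv0$ and $2m+1\equiv0\pmod p$. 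But $6k+1\equiv0$ is precisely the index $k=\tfrac{\pm p-1}{6}$ excluded from the sum in \eqref{a17}, while $2m+1\equiv0$ forces $m\equiv\tfrac{p-1}2$, which lies outside the range $0\le m\le\tfrac{p-3}2$ in \eqref{a2}; hence no generic-generic pair contributes to this class.

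The step I expect to be most delicate is ruling out the mixed contributions, i.e.\ pairing a generic summand of one dissection with the distinguished last term of the other. These are excluded by the very separations already recorded, the omission of $k=\tfrac{\pm p-1}{6}$ in \eqref{a17} and the remark after \eqref{a2} that $\tfrac{m^2+m}2\not\equiv\tfrac{p^2-1}8\pmod p$ in the admissible range, so that only the product of the two distinguished terms survives the extraction. That product has exponent $\tfrac{p^2-1}{12}+\tfrac{3(p^2-1)}8=\tfrac{11(p^2-1)}{24}$ and eta-part $f_{2p^2}\psi(q^{3p^2})\equiv f_{2p^2}f_{3p^2}^3\pmod2$. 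Dividing by $q^{11(p^2-1)/24}$ and replacing $q^p$ by $q$ gives $\sum_{n\ge0}b\!\left(pn+\tfrac{11(p^2-1)}{24}\right)q^n\equiv f_{2p}f_{3p}^3\pmod2$; as the right-hand side is a series in $q^p$, a second extraction yields $b\!\left(p^2n+\tfrac{11(p^2-1)}{24}\right)\equiv b(n)\pmod2$ together with $b\!\left(p^2n+pi+\tfrac{11(p^2-1)}{24}\right)\equiv0\pmod2$ for $i=1,\dots,p-1$, exactly as in \eqref{44a9}--\eqref{44a11}. Iterating the first relation gives $b\!\left(p^{2\alpha}n+\tfrac{11(p^{2\alpha}-1)}{24}\right)\equiv b(n)\pmod2$, and substituting $n\mapsto p^2n+pi+\tfrac{11(p^2-1)}{24}$ into it, then invoking the vanishing relation, produces $b(N)\equiv0\pmod2$ for the $N$ above; combined with $\overline{S_9}(24N+11)\equiv16\,b(N)\pmod{32}$ and the first paragraph, this is \eqref{44b1}. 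As a shortcut, the entire mod-$2$ analysis can be bypassed: $b(N)$ counts the representations $24N+11=2u^2+9v^2$, and since $v_p(24N+11)=2\alpha+1$ is odd while the condition $\left(\tfrac{-18}{p}\right)=-1$ forces $p\mid2u^2+9v^2\Rightarrow p\mid u,\ p\mid v$, a descent shows $24N+11$ admits no such representation, whence $b(N)=0$.
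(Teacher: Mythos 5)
Your proof is correct and takes essentially the same route as the paper: reduce via \eqref{mainth} (with $\zeta=4$, $\eta=1$, $t=9$) and the even-part extraction of \eqref{res9412} to $\sum_{n\ge0}\overline{S}_9^{16m+1}(24n+11)q^n\equiv16f_2f_3^3\pmod{32}$, then run the Cui--Gu $p$-dissection argument of Theorem~\ref{mains9}, where your key computation $2(6k+1)^2+9(2m+1)^2\equiv0\pmod p$ together with $\left(\tfrac{-18}{p}\right)=\left(\tfrac{-2}{p}\right)=-1$ is exactly the step the paper leaves implicit in its two-line sketch. Your closing shortcut via representations $24N+11=2u^2+9v^2$ and descent on the odd $p$-adic valuation is a clean self-contained alternative the paper does not record, but the main argument matches the paper's intended proof.
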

		\begin{proof}
			Substituting $\zeta=4$, \( \eta= 1 \) and, $t=9$, into \eqref{mainth} and using \eqref{res9412}, we have
			\begin{align}\label{res9412p}
				&\sum_{n \geq 0}\overline{S}_9^{16m + 1}(24n+11)q^n\equiv16f_2f_{3}^3\pmod{32}.
			\end{align}
			The remainder of the proof follows the argument of Theorem~\ref{mains9}, using \eqref{res9412p}.
			
		\end{proof}
		
		\begin{thm}
			Let $p\ge 5$ be a prime with $\left(\frac{-3}{p}\right)=-1$,and let $\alpha, n\geq 0$. Then
			\begin{equation}
				\overline{S}_9^{8m + 1}\left ( 18p^{2\alpha +2} n+3\left (6i+ p\right)p^{2\alpha +1} \right )\equiv0\pmod{16} ,\label{44b1}
			\end{equation}
			where $i\in\{1,2,\dots,p-1\}$.
		\end{thm}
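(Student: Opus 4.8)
The plan is to reduce the tuple parameter to a single copy and then mimic the dissection argument of Theorem~\ref{mains9}, with $f_2^2$ playing the role that $f_1f_3$ played there. First I would note that $8m+1 = 2^3 m + 1$, so \eqref{mainth} with $\zeta = 3$, $\eta = 1$, $t = 9$ gives $\overline{S}_9^{8m+1}(N) \equiv \overline{S_9}(N) \pmod{16}$ for all $N$; hence it is enough to prove $\overline{S_9}(M) \equiv 0 \pmod{16}$ with $M = 18p^{2\alpha+2}n + 3(6i+p)p^{2\alpha+1}$. The hypothesis $\left(\frac{-3}{p}\right) = -1$ forces $p \equiv 2 \pmod 3$, so $p \equiv 5 \pmod 6$ and $p^2 \equiv 1 \pmod 6$; a short computation then shows $M = 18N + 3$ with $N = p^{2\alpha+2}n + i\,p^{2\alpha+1} + \frac{p^{2\alpha+2}-1}{6}$, the last summand being an integer since $6 \mid p^2 - 1$. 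Writing $f_2^2 = \sum_{n\ge 0} a(n)q^n$, the base congruence \eqref{res932} gives $\overline{S_9}(18N+3) \equiv 4\,a(N)\pmod{16}$, so the whole theorem reduces to the exact vanishing $a(N) = 0$.

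For this I would use the factorization $f_2^2 = f_1\,\psi(q)$, substitute the $p$-dissection \eqref{a17} for $f_1$ and \eqref{a2} for $\psi(q)$, and collect the exponents in the class $\frac{p^2-1}{6}\pmod p$. The generic $k$-term of \eqref{a17} and $m$-term of \eqref{a2} contribute the leading exponent $\frac{3k^2+k}{2} + \frac{m^2+m}{2}$, and requiring this to be $\equiv \frac{p^2-1}{6}\pmod p$ reads, after multiplying by $24$, as $(6k+1)^2 + 3(2m+1)^2 \equiv 0 \pmod p$. Since $\left(\frac{-3}{p}\right)=-1$, this congruence forces $6k+1 \equiv 0$ and $2m+1 \equiv 0 \pmod p$ simultaneously. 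As $2m+1$ never vanishes modulo $p$ in the range $0 \le m \le \frac{p-3}{2}$, none of the main $\psi$-terms can contribute; the only surviving product is that of the distinguished term $(-1)^{(\pm p-1)/6}q^{(p^2-1)/24}f_{p^2}$ of \eqref{a17} with the final term $q^{(p^2-1)/8}f_{2p^2}^2/f_{p^2}$ of \eqref{a2}. After cancelling $f_{p^2}$ and replacing $q^p$ by $q$, this yields $\sum_{n\ge 0} a\!\left(pn + \frac{p^2-1}{6}\right)q^n = (-1)^{(\pm p-1)/6} f_{2p}^2$.

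Because $f_{2p}^2 = f_2^2(q^p)$ is supported on exponents divisible by $p$, the last identity splits into $a\!\left(pn + \frac{p^2-1}{6}\right) = 0$ when $p \nmid n$ and $a\!\left(p^2 n + \frac{p^2-1}{6}\right) = (-1)^{(\pm p-1)/6}a(n)$ otherwise. The first relation, with $n$ replaced by $pn + i$ and $1 \le i \le p-1$, disposes of the case $\alpha = 0$. For general $\alpha$ I would iterate the second relation to obtain $a\!\left(p^{2\alpha}N' + \frac{p^{2\alpha}-1}{6}\right) = (-1)^{\alpha(\pm p-1)/6}a(N')$, and then take $N' = p^2 n + pi + \frac{p^2-1}{6} = p(pn+i) + \frac{p^2-1}{6}$, for which the first relation gives $a(N') = 0$ since $p \nmid (pn+i)$; the resulting index is exactly $N$, so $a(N) = 0$. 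Combining with $\overline{S_9}(18N+3) \equiv 4a(N) \pmod{16}$ and the reduction in Step~1 gives $\overline{S}_9^{8m+1}(M) \equiv 0 \pmod{16}$.

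The hard part will be the simultaneous dissection in the second paragraph: one has to carry \eqref{a17} and \eqref{a2} through the product $f_1\psi(q)$ and verify that a single cross term lands in the residue class $\frac{p^2-1}{6}\pmod p$. This hinges on the quadratic-form identity $(6k+1)^2 + 3(2m+1)^2$ together with $\left(\frac{-3}{p}\right) = -1$, and on the fact that the index range of \eqref{a2} excludes $2m+1 \equiv 0 \pmod p$, so that the main $\psi$-terms drop out and only the two boundary terms survive to give the clean self-similar identity that drives the iteration.
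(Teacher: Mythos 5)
Your proposal is correct and takes essentially the same route as the paper: the paper's proof is just the pointer ``similar to Theorem~\ref{mains9}, using \eqref{res932}'', and the details you supply---the reduction $\overline{S}_9^{8m+1}(n)\equiv\overline{S_9}(n)\pmod{16}$ via \eqref{mainth} with $\zeta=3$, $\eta=1$, the factorization $f_2^2=f_1\psi(q)$ dissected by \eqref{a17} and \eqref{a2}, and the quadratic form $(6k+1)^2+3(2m+1)^2\equiv0\pmod p$ forced trivial by $\left(\frac{-3}{p}\right)=-1$---are exactly what that template (cf.\ the analogous argument with $\psi(q)f_4$ in Theorem~\ref{reff}) produces. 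Your identification of the surviving cross term as the distinguished term of \eqref{a17} times the final term of \eqref{a2}, yielding $\sum_{n\ge0}a\bigl(pn+\frac{p^2-1}{6}\bigr)q^n=(-1)^{(\pm p-1)/6}f_{2p}^2$ and hence $a(N)=0$ on the required progressions, matches the paper's intended computation.
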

		\begin{proof}
			The proof is similar to the proof of Theorem \ref{mains9}, where \eqref{res932} is used.
		\end{proof}
		
		\begin{thm}
			Let $p\ge 5$ be a prime with $\left(\frac{-9}{p}\right)=-1$, and let $\alpha, n\geq 0$. Then
			\begin{equation}
				\overline{S}_9^{8m + 1}\left ( 12p^{2\alpha +2} n+\left (12i+ 5p\right)p^{2\alpha +1} \right )\equiv0\pmod{16} ,\label{44b1}
			\end{equation}
			where $i\in\{1,2,\dots,p-1\}$.
		\end{thm}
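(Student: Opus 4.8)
The plan is to collapse the colored function to the uncolored one and then run the dissection machine of Theorem~\ref{mains9}. First I would apply \eqref{mainth} with $t=9$, $\zeta=3$, and $\eta=1$, which gives $\overline{S}_9^{8m+1}(n)\equiv\overline{S_9}(n)\pmod{16}$ for every $n$; thus it suffices to produce a clean eta-quotient congruence for $\overline{S_9}$ on the progression $n\equiv5\pmod{12}$. Starting from \eqref{res922}, namely $\sum_{n\ge0}\overline{S_9}(6n+5)q^n\equiv 8f_2f_6^3\pmod{16}$, I would note that $f_2f_6^3$ is a power series in $q^2$ alone, since the pentagonal exponents $3k^2-k$ of $f_2$ and the exponents $3m(m+1)$ of $f_6^3$ are both even. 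Extracting the even part and replacing $q^2$ by $q$ (so $f_2\mapsto f_1$ and $f_6\mapsto f_3$) yields the base identity
\[
\sum_{n\ge0}\overline{S_9}(12n+5)q^n\equiv 8f_1f_3^3\pmod{16},
\]
and combined with the reduction above, $\sum_{n\ge0}\overline{S}_9^{8m+1}(12n+5)q^n\equiv 8f_1f_3^3\pmod{16}$. Setting $\sum_{n\ge0}c(n)q^n=f_1f_3^3$, the statement reduces to showing that $c$ vanishes on the relevant progression, since then $8c(\cdot)\equiv0\pmod{16}$.

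The core of the argument is a double $p$-dissection of $f_1f_3^3$: I would apply \eqref{a17} to $f_1$ and apply \eqref{abc1} (with $q$ replaced by $q^3$) to $f_3^3$, then track exponents modulo $p$. The generic (non-isolated) contributions carry $q$-powers congruent to $\tfrac{3a^2+a}{2}+\tfrac{3(k^2+k)}{2}\pmod p$; multiplying by $24$ recasts this as $(6a+1)^2+9(2k+1)^2-10$. Because the shift is $s:=\tfrac{5(p^2-1)}{12}$ with $24s\equiv-10\pmod p$, selecting the terms with exponent $\equiv s\pmod p$ forces
\[
(6a+1)^2+9(2k+1)^2\equiv0\pmod p.
\]
Since $9$ is a square, the hypothesis $\left(\tfrac{-9}{p}\right)=-1$ is equivalent to $\left(\tfrac{-1}{p}\right)=-1$, so $-9$ is a nonresidue and the only solution is $6a+1\equiv2k+1\equiv0\pmod p$; these are exactly the isolated terms of \eqref{a17} and \eqref{abc1}. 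Their product contributes the single term $\varepsilon\,q^{s}f_{p^2}f_{3p^2}^3$ with $\varepsilon=\pm p\neq0$, the exponent check being $\tfrac{p^2-1}{24}+\tfrac{3(p^2-1)}{8}=\tfrac{5(p^2-1)}{12}=s$.

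From here the scheme of Theorem~\ref{mains9} goes through. Extracting the $q^{pn+s}$ terms and replacing $q^p$ by $q$ shows that only residues $\equiv s\pmod{p^2}$ survive, whence $c\!\left(p^2n+pi+s\right)=0$ for $i\in\{1,\dots,p-1\}$, while the diagonal gives the self-similar relation $c\!\left(p^2n+s\right)=\varepsilon\,c(n)$. Iterating the latter yields $c\!\left(p^{2\alpha}n+s_\alpha\right)=\varepsilon^{\alpha}c(n)$ with $s_\alpha=\tfrac{5(p^{2\alpha}-1)}{12}$; substituting $n\mapsto p^2n+pi+s$ and invoking the vanishing gives $c\!\left(p^{2\alpha+2}n+p^{2\alpha+1}i+s_{\alpha+1}\right)=0$. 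Translating back through $12(\,\cdot\,)+5$ turns the index into $12p^{2\alpha+2}n+(12i+5p)p^{2\alpha+1}$, which is exactly the claimed congruence. The main obstacle I anticipate is the bookkeeping in the double dissection: unlike the $f_1f_3$ of Theorem~\ref{mains9}, the cube $f_3^3$ forces \eqref{abc1}, whose inner sums $B_k$ complicate the exponent tracking and whose isolated term carries an extra factor of $p$. One must verify carefully that no mixed term (one factor generic, the other isolated) lands in the residue class $s\pmod p$—a short check shows such a term would again force $6a+1\equiv0$ or $2k+1\equiv0$, contradicting genericity—and that the stray constant $\varepsilon=\pm p$ is harmless because it only multiplies an already-vanishing coefficient.
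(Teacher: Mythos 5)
Your proposal is correct and takes essentially the same route as the paper: the paper likewise applies \eqref{mainth} with $\zeta=3$, $\eta=1$, $t=9$ together with \eqref{res922} to get $\sum_{n\ge0}\overline{S}_9^{8m+1}(12n+5)q^n\equiv 8f_1f_3^3\pmod{16}$, and then runs the $p$-dissection machinery of Theorem~\ref{mains9}. Your write-up simply makes explicit what the paper leaves implicit --- the use of \eqref{abc1} for $f_3^3$, the elimination of the mixed generic/isolated terms, and the harmless factor $\pm p$ in the self-similar relation --- all of which check out.
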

		\begin{proof}  
			Substituting $\zeta=3$, \( \eta= 1 \) and, $t=9$, into \eqref{mainth} and using \eqref{res922}, we have 
			\begin{align}
				&\sum_{n \geq 0}\overline{S}_9^{8m + 1}(12n+5)q^n\equiv 8f_1f_3^3 \pmod{16}.\label{res922p}
			\end{align}
			Remaining part of the proof follows the proof of Theorem \ref{mains9}, where \eqref{res922p} is used.
		\end{proof}
		\begin{thm}
			Let $p\ge 5$ be a prime with $\left(\frac{-3}{p}\right)=-1$, and let $\alpha, n\geq 0$. Then
			\begin{equation}
				\overline{S}_9^{4m + 1}\left ( 6p^{2\alpha +2} n+\left (6i+ p\right)p^{2\alpha +1} \right )\equiv0\pmod{8} ,\label{4s94b1}
			\end{equation}
			where $i\in\{1,2,\dots,p-1\}$.
		\end{thm}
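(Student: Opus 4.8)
The plan is to reproduce the architecture of Theorem~\ref{mains9}, but starting from the modulus-$8$ information about $\overline{S}_9$. First I would apply \eqref{mainth} with $\zeta=2$, $\eta=1$, $t=9$ to get $\overline{S}_9^{4m+1}(n)\equiv\overline{S}_9(n)\pmod 8$, so it suffices to establish the vanishing for $\overline{S}_9$. By \eqref{res921}, $\sum_{n\ge0}\overline{S}_9(6n+1)q^n\equiv 2f_2^2\pmod 8$. The pivotal move is to write, using $\psi(q)=f_2^2/f_1$, the factorization $f_2^2=f_1\,\psi(q)$, and to set $\sum_{n\ge0}a(n)q^n=f_1\psi(q)$, so that $\overline{S}_9(6n+1)\equiv 2a(n)\pmod 8$.

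Next I would insert the prime dissection \eqref{a17} of $f_1$ and the dissection \eqref{a2} of $\psi(q)$ into $f_1\psi(q)$ and collect the terms whose $q$-exponent is $\equiv\tfrac{p^2-1}{6}\pmod p$; note $\tfrac{p^2-1}{24}+\tfrac{p^2-1}{8}=\tfrac{p^2-1}{6}$, so this is the class hit by the product of the two principal terms. A generic cross term carries exponent $\tfrac{3k^2+k}{2}+\tfrac{m^2+m}{2}$, and multiplying the target congruence by $24$ and completing the square converts $\tfrac{3k^2+k}{2}+\tfrac{m^2+m}{2}\equiv\tfrac{p^2-1}{6}\pmod p$ into $(6k+1)^2+3(2m+1)^2\equiv0\pmod p$. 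Since $\left(\frac{-3}{p}\right)=-1$, this forces $6k+1\equiv2m+1\equiv0\pmod p$; as the summation ranges in \eqref{a17} and \eqref{a2} exclude exactly these values, only the product of the two principal terms survives, equal to $(-1)^{(\pm p-1)/6}q^{(p^2-1)/6}f_{p^2}\cdot\frac{f_{2p^2}^2}{f_{p^2}}=(-1)^{(\pm p-1)/6}q^{(p^2-1)/6}f_{2p^2}^2$.

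Dividing by $q^{(p^2-1)/6}$ and sending $q^p\mapsto q$ then gives $\sum_{n\ge0}a\!\left(pn+\tfrac{p^2-1}{6}\right)q^n=(-1)^{(\pm p-1)/6}f_{2p}^2$. Because $f_{2p}^2$ is a series in $q^p$, comparing coefficients yields the vanishing $a\!\left(p^2n+pi+\tfrac{p^2-1}{6}\right)=0$ for $i\in\{1,\dots,p-1\}$ and the recursion $a\!\left(p^2n+\tfrac{p^2-1}{6}\right)=(-1)^{(\pm p-1)/6}a(n)$. Iterating the recursion gives $a\!\left(p^{2\alpha}n+\tfrac{p^{2\alpha}-1}{6}\right)=(-1)^{\alpha(\pm p-1)/6}a(n)$; substituting $n\mapsto p^2n+pi+\tfrac{p^2-1}{6}$ and using the vanishing produces $a\!\left(p^{2\alpha+2}n+p^{2\alpha+1}i+\tfrac{p^{2\alpha+2}-1}{6}\right)=0$ for all $\alpha\ge0$. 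Feeding this index into $\overline{S}_9(6n+1)\equiv2a(n)\pmod 8$ and simplifying $6\big(p^{2\alpha+2}n+p^{2\alpha+1}i+\tfrac{p^{2\alpha+2}-1}{6}\big)+1=6p^{2\alpha+2}n+(6i+p)p^{2\alpha+1}$ gives $\overline{S}_9\big(6p^{2\alpha+2}n+(6i+p)p^{2\alpha+1}\big)\equiv0\pmod 8$, and \eqref{mainth} lifts this to $\overline{S}_9^{4m+1}$.

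The step I expect to be the crux is the factorization in the first paragraph. Read naively, $2f_2^2\pmod 8$ invites one to analyse $f_2^2=f_1^2(q^2)$, whose prime dissection is governed by the form $x^2+y^2$ and hence by $\left(\frac{-1}{p}\right)$ rather than $\left(\frac{-3}{p}\right)$, yielding the wrong hypothesis and the wrong progressions. Recognizing instead that $f_2^2=f_1\psi(q)$, so that the relevant mixed form is $(6k+1)^2+3(2m+1)^2$ with discriminant $-3$, is what makes $\left(\frac{-3}{p}\right)=-1$ the correct condition and aligns the vanishing indices with those of Theorem~\ref{mains9}. A minor bookkeeping difference from Theorem~\ref{mains9} is that here only $f_1$ contributes the sign $(-1)^{(\pm p-1)/6}$ (the principal term of $\psi$ has coefficient $+1$), so a single, rather than squared, sign accumulates through the iteration; this is harmless since it always multiplies an already-vanishing coefficient.
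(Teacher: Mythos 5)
Your proposal is correct and is essentially the paper's own proof: the paper disposes of this theorem in one line (``similar to the proof of Theorem~\ref{mains9}, where \eqref{res921} is used''), and the details you supply---reducing $\overline{S}_9^{4m+1}$ to $\overline{S}_9$ modulo $8$ via \eqref{mainth} with $\zeta=2$, $\eta=1$, factoring $f_2^2=f_1\psi(q)$, and combining the dissections \eqref{a17} and \eqref{a2} so that $\left(\frac{-3}{p}\right)=-1$ forces $(6k+1)^2+3(2m+1)^2\equiv 0\pmod{p}$ to have no solutions in the summation ranges, leaving only the product of the two principal terms---are exactly the steps the paper carries out explicitly in the analogous Theorem~\ref{reff} for $\psi(q)f_4$. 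Your closing observation, that the naive reading $f_2^2=f_1^2(q^2)$ would be governed by the form $x^2+y^2$ and hence by $\left(\frac{-1}{p}\right)$, correctly identifies why $f_1\psi(q)$ is the factorization that the stated hypothesis $\left(\frac{-3}{p}\right)=-1$ presupposes.
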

		\begin{proof}  
			The proof is similar to the proof of Theorem \ref{mains9}, where \eqref{res921} is used.
		\end{proof}
		
		\section{Congruences for $\overline{S_3}(n)$}\label{s3}
		In this section, we establish several infinite families of congruences modulo 3, 4, 8, 12, 16, 24 and 32 for $\overline{S_3}(n)$. We begin with the proof of Theorem \ref{tm1}.
		\begin{proof}[Proof of Theorem \ref{tm1}]
				Using \eqref{f2byf12} for $\dfrac{f_2}{f_1^2}$ and \eqref{f12byf2} with the substitution $q\mapsto q^2$ for $\dfrac{f_2^2}{f_4}$ in the generating function \eqref{tsp} (with $t=3\ell$, $\ell$ odd), we obtain
				
				\begin{equation}\label{tsp31}
					\sum_{n \geq 0} \overline{S}_{3\ell}(n) q^n = \left(\frac{f_6^4 f_9^6}{f_3^8 f_{18}^3} + 2q \frac{f_6^3 f_9^3}{f_3^7} + 4q^2 \frac{f_6^2 f_{18}^3}{f_3^6}\right)\left(\frac{f_{18}^2}{f_{36}} - 2q^2 \frac{f_6 f_{36}^2}{f_{12} f_{18}}\right)\frac{f_{3\ell}^2f_{12\ell}}{f_{6\ell}^3}.
				\end{equation} 
				Equations \eqref{tsp3n}–\eqref{tsp3n2} follow by collecting the coefficients of $q^{3n}$, $q^{3n+1}$, and $q^{3n+2}$, respectively, on both sides of \eqref{tsp31}.
			\end{proof}
			
			\begin{thm}
				For any integers $n\ge 0$, $\alpha\ge 0$, and $\beta\ge 1$, we have
				\begin{align}
					\overline{S_3}\big(3n\big) &\equiv \overline{S_3}(n) \pmod{4},\label{n3n}\\
					\overline{S_3}\big(2^\alpha(3n+j)\big) &\equiv \overline{S_3}(3n+j) \pmod{4}
					&&\text{for each } j\in\{0,1,2\}, \label{sum1}\\
					\overline{S_3}\big(2^{\,2\beta+j}(3n+2)\big) &\equiv \overline{S_3}\big(2^{\,j}(3n+2)\big) \pmod{8}
					&&\text{for } j\in\{0,1\}. \label{tsp4nx0}
				\end{align}
			\end{thm}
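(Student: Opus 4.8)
The plan is to read off the generating function from \eqref{tsp} at $t=3$,
\[
\sum_{n\ge0}\overline{S_3}(n)q^n=\frac{f_2^3 f_3^2 f_{12}}{f_1^2 f_4 f_6^3}=:F(q),
\]
and to reduce it modulo $4$ and $8$ with repeated use of \eqref{bt}. Using $f_2^3\equiv f_1^4 f_2$ and $f_6^3\equiv f_3^4 f_6\pmod4$ gives $F\equiv \frac{f_1^2 f_2 f_{12}}{f_3^2 f_4 f_6}\pmod4$, and the identity $f_3^4 f_6^2\equiv f_{12}^2\pmod4$ (again from \eqref{bt}) will be used repeatedly to recognise when two quotients agree modulo $4$. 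For \eqref{n3n} I specialize the $3$-dissection \eqref{tsp3n} to $\ell=1$: modulo $4$ the summand carrying the explicit factor $4q$ vanishes, and \eqref{bt} reduces the surviving eta-quotient to one congruent to the reduced form of $F$ above. Comparing coefficients of $q^n$ then yields $\overline{S_3}(3n)\equiv\overline{S_3}(n)\pmod4$.

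For \eqref{sum1} it is enough to prove $\overline{S_3}(2n)\equiv\overline{S_3}(n)\pmod4$ and then replace $n$ by $2^{\alpha-1}(3n+j)$ repeatedly. Factoring $F=\frac{f_3^2}{f_1^2}\cdot\frac{f_2^3 f_{12}}{f_4 f_6^3}$, where the second factor is a power series in $q^2$, and inserting the exact $2$-dissection \eqref{f3.2_f1.2}, the even part gives the clean identity
\[
\sum_{n\ge0}\overline{S_3}(2n)q^n=\frac{f_2^3 f_6^3}{f_1^2 f_3^2 f_4 f_{12}}.
\]
Reducing the right-hand side modulo $4$ with \eqref{bt} produces $\frac{f_1^2 f_2 f_3^2 f_6}{f_4 f_{12}}$, which coincides with the reduced form of $F$ after the identity $f_3^4 f_6^2\equiv f_{12}^2\pmod4$. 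Hence $\overline{S_3}(2n)\equiv\overline{S_3}(n)\pmod4$, and \eqref{sum1} follows for all $\alpha\ge0$ and $j\in\{0,1,2\}$.

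The congruence \eqref{tsp4nx0} is the substantive one. Set $R:=\dfrac{f_3^6}{f_1^2}$. Specializing \eqref{tsp3n2} to $\ell=1$ and reducing modulo $8$ via \eqref{bt} collapses both of its terms onto $2R$, giving $\sum_{n}\overline{S_3}(3n+2)q^n\equiv 2R\pmod8$; likewise \eqref{tsp3n1} at $\ell=1$ reduces modulo $8$ to $2\frac{f_3^3 f_4 f_6^2}{f_1^5 f_{12}}$, and extracting the odd part—which records $\overline{S_3}(6n+4)$—and simplifying modulo $4$ yields $\sum_n\overline{S_3}(6n+4)q^n\equiv 2R\pmod8$ as well. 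Let $W_0,W_1$ denote these two generating functions and let $L$ be the linear operator $\sum a(n)q^n\mapsto\sum a(4n+2)q^n$. Because $4(3n+2)=3(4n+2)+2$ and $4(6n+4)=6(4n+2)+4$, both index classes are stable under multiplication by $4$, and one checks directly that $\sum_n\overline{S_3}\big(2^{2\beta+j}(3n+2)\big)q^n=L^{\beta}[W_j]$ for $j\in\{0,1\}$. Since $L$ respects congruences modulo $8$ and $L[2R]=2\,L[R]$, the whole statement reduces to the fixed-point congruence $L[R]\equiv R\pmod4$: granting it, $L^{\beta}[W_j]\equiv 2\,L^{\beta}[R]\equiv 2R\equiv W_j\pmod8$ for every $\beta\ge1$, which is precisely \eqref{tsp4nx0}.

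The main obstacle is thus the self-map congruence $L[R]\equiv R\pmod4$ for $R=f_3^6/f_1^2$, which I would attack by two nested $2$-dissections. Inserting \eqref{1byf12} for $1/f_1^2$ and the cube of \eqref{f12} (with $q\mapsto q^3$) for $f_3^6$, one finds modulo $4$ that $R$ splits as one even-exponent eta-quotient plus two terms of the shape $2q(\cdots)$ and $2q^3(\cdots)$ supported on odd exponents; only the even-exponent piece can feed the progression $q^{4n+2}$, and a second $2$-dissection isolates that contribution. The delicate part—where the computation must be carried out carefully rather than routinely—is simplifying this doubly-dissected quotient back to $R$ modulo $4$ via \eqref{bt}, together with the parallel reduction of $\frac{f_3^3 f_4 f_6^2}{f_1^5 f_{12}}$ needed to identify $W_1$ with $2R$ modulo $8$. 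Once $L[R]\equiv R\pmod4$ is established, the induction on $\beta$ above closes the argument.
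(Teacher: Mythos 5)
Your proposal is correct in substance but organized genuinely differently from the paper, and the reorganization is mostly an improvement. For \eqref{n3n} you follow the paper exactly: specialize \eqref{tsp3n} to $\ell=1$, discard the $4q$-term modulo $4$, and compare with the generating function \eqref{tsp} reduced modulo $4$ (be aware that the printed \eqref{tsp3n} carries a spurious $f_4$ in its first numerator; the correctly dissected term $\frac{f_2f_4f_3^6}{f_1^6f_6f_{12}}$ reduces as you claim, in agreement with the paper's own \eqref{tsp3n3}). For \eqref{sum1} the paper proves three separate single-step congruences, one for each residue class modulo $3$, via three different dissection chains (\eqref{tsp3n331}, \eqref{tsp3n3218}/\eqref{tsp3n3218xo}, \eqref{tsp3n3220}/\eqref{tsp3n212}); your uniform statement $\overline{S_3}(2n)\equiv\overline{S_3}(n)\pmod4$, obtained from the exact even-part extraction $\sum_{n\ge0}\overline{S_3}(2n)q^n=\frac{f_2^3f_6^3}{f_1^2f_3^2f_4f_{12}}$ (the paper's \eqref{s31}) together with \eqref{bt}, is stronger and cleaner, and I have verified all your quotient reductions. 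It also sidesteps a subtlety the paper leaves implicit: doubling swaps the classes $3n+1$ and $3n+2$, so iterating the paper's $j=1$ step alone does not close without the $j=2$ step; your version makes the iteration trivial. For \eqref{tsp4nx0}, your observation that both branches collapse onto $2R$ with $R=f_3^6/f_1^2$ (since $4f_6^3/f_2\equiv4R\pmod8$ and $2f_6^4/(f_1^2f_3^2)\equiv2R\pmod8$) and your operator $L\colon\sum a(n)q^n\mapsto\sum a(4n+2)q^n$ unify the paper's two parallel case analyses ($j=0$ via \eqref{tsp3n2new}--\eqref{tsp3n2new3}, $j=1$ via \eqref{tsp3n32183}--\eqref{tsp3n321861}) into the single fixed-point congruence $L[R]\equiv R\pmod4$; the index bookkeeping $4(3n+2)=3(4n+2)+2$ and $4(6n+4)=6(4n+2)+4$ is right, and incidentally the paper's \eqref{tsp3n2new3} should read $12n+8$ rather than $12n+6$ --- that series is precisely your $L[W_0]$.

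The one step you defer, $L[R]\equiv R\pmod4$, is true, but your proposed route through \eqref{f12} and \eqref{1byf12} is heavier than necessary: it closes in three lines with the lemmas you already invoked, mirroring the paper's own chain. Since $f_3^4\equiv f_6^2\pmod4$, you have $R\equiv f_6^2\,\frac{f_3^2}{f_1^2}\pmod4$, so by \eqref{f3.2_f1.2} the even part of $R$ modulo $4$ is $\frac{f_4^4f_6^3f_{12}^2}{f_2^5f_8f_{24}}$; replacing $q^2$ by $q$ and using $f_1^4\equiv f_2^2\pmod4$ this becomes $\frac{f_2^2f_6^2}{f_4f_{12}}\cdot\frac{f_3^3}{f_1}$, whose odd part by \eqref{f33byf1} is $q\,\frac{f_2^2f_6^2f_{12}^2}{f_4^2}$. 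Hence
\[
L[R]\equiv \frac{f_1^2f_3^2f_6^2}{f_2^2}\equiv \frac{f_3^6}{f_1^2}=R\pmod4,
\]
using $f_1^4\equiv f_2^2$ and $f_3^4\equiv f_6^2$ once more. With this computation supplied (and the analogous one-line \eqref{f33byf1} extraction you describe for identifying $W_1\equiv2R\pmod8$, which matches the paper's \eqref{tsp3n32183}), your induction on $\beta$ is complete and the proof is valid.
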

			
			\begin{proof}   
				Substituting $\ell=1$ in \eqref{tsp3n} and using \eqref{bt}, we get 
				\begin{align}
					&\sum_{n= 0}^{\infty } \overline{S_3}(3n)q^n\equiv\frac{f_4 f_6}{f_2 f_{12}}\frac{f_3^2}{f_1^2} \pmod{4}.\label{tsp3n3}
				\end{align}
				Using the definition of $ \overline{S_3}(n)$ and above equation, we arrive at \eqref{n3n}. Invoking \eqref{f3.2_f1.2} and extracting the terms involving $q^{2n}$ and $q^{2n+1}$ from both sides of the resulting equation, we have
				\begin{align}
					&\sum_{n= 0}^{\infty } \overline{S_3}(6n)q^n\equiv\frac{f_3^2f_2^5f_6}{f_1^6 f_4f_{12}} \equiv \frac{f_4 f_6}{f_2 f_{12}}\frac{f_3^2}{f_1^2} \pmod{4}\label{tsp3n331}
				\end{align}
				and 
				\begin{align}\label{ref1sp}
					&\sum_{n= 0}^{\infty } \overline{S_3}(6n+3)q^n\equiv2\frac{f_2^2f_3^3f_4f_{12}}{f_1^5 f_6^2}\equiv2\frac{f_2^2f_3^3}{f_1}\pmod{4}.
				\end{align}
					Combining \eqref{tsp3n3} with \eqref{tsp3n331} and iterating, we obtain
					\begin{align}
						\overline{S_3}(2^\alpha(3n))\equiv\overline{S_3}(3n)\pmod{4}.\label{tsp4n}
					\end{align}
					Substituting $\ell=1$ in \eqref{tsp3n1} and using \eqref{bt}, we get  
					\begin{align}
						\sum_{n= 0}^{\infty } \overline{S_3}(3n+1)q^n\equiv 2 \frac{f_4f_6^2}{f_2^2f_{12}}\frac{f_3^3}{f_1}\pmod{8}.\label{tsp3n32181}
					\end{align}
					Invoking \eqref{f33byf1} in \eqref{tsp3n32181} and extracting those terms involving the powers $q^{2n}$ and $q^{2n+1}$ from both sides of the resulting equation, we get
					\begin{align}
						\sum_{n= 0}^{\infty } \overline{S_3}(6n+1)q^n\equiv 2 \frac{f_2^4f_3^4}{f_6^2f_{1}^4}\equiv2{f_2^2}\pmod{8}.\label{tsp3n32182}
					\end{align}
					and
					\begin{align}
						\sum_{n= 0}^{\infty } \overline{S_3}(6n+4)q^n\equiv2\frac{f_3^2f_6^2}{f_1^2}\pmod{8}.\label{tsp3n32183}
					\end{align}
					Substituting \eqref{f3.2_f1.2} in the above equation \eqref{tsp3n32183} and extracting those terms involving the powers $q^{2n}$ and $q^{2n+1}$ from both sides of the resulting equation, we have
					\begin{align}
						\sum_{n= 0}^{\infty } \overline{S_3}(12n+4) q^{n}\equiv 2 \frac{f_2^2f_6^2} {f_4f_{12}}\frac{f_3^3}{f_1}\pmod{8}\label{tsp3n321831}
					\end{align}
					and
					\begin{align}
						\sum_{n= 0}^{\infty } \overline{S_3}(12n+10)q^n\equiv 4 \frac{f_3^4f_4f_{12}}{f_2^2f_{6}}\equiv4f_2f_6f_{12}\pmod{8}.\label{tsp3n32184}
					\end{align}
					Applying \eqref{f33byf1} to \eqref{tsp3n321831} and then collecting the $q^{2n+1}$ - terms, we obtain
					\[
					\sum_{n= 0}^{\infty } \overline{S_3}(24n+16)q^n \equiv 2\,\frac{f_1^2 f_3^2 f_6^2}{f_2^2}
					\equiv 2\,\frac{f_3^2 f_6^2}{f_1^2} \pmod{8}. \label{tsp3n321861}
					\]
					Note that from \eqref{tsp3n32183} and \eqref{tsp3n321861}, we get
					\begin{align}
						\overline{S_3}(6n+4)\equiv\overline{S_3}(24n+16)\pmod{8}.\label{tsp3n32186}
					\end{align}
					Substituting $\ell=1$ in \eqref{tsp3n2} and using \eqref{bt}, we have
					\begin{align}
						&\sum_{n= 0}^{\infty } \overline{S_3}(3n+2)q^n\equiv 
						2\frac{f_6^3}{f_{2}}\pmod{4}.\label{tsp3n212}
					\end{align}
					Again, from \eqref{tsp3n212}, we have 
					\begin{align}
						\sum_{n= 0}^{\infty } \overline{S_3}(6n+2)q^n\equiv2\frac{f_3^3}{f_1}  \pmod{4}. \label{tsp3n3218}
					\end{align}
					Note that \eqref{tsp3n1} with $\ell=1$ reduces to 
					\begin{align}
						\sum_{n= 0}^{\infty } \overline{S_3}(3n+1)q^n\equiv2\frac{f_3^3}{f_1}  \pmod{4}. \label{tsp3n3218xo}
					\end{align}
					The following congruence follows from \eqref{tsp3n3218} and \eqref{tsp3n3218xo}, we have
					\begin{align}
						\overline{S_3}(2^{\alpha}(3n+1))\equiv\overline{S_3}(3n+1)\pmod{4}.\label{tsp4nx1}
					\end{align}
					Invoking \eqref{f33byf1} in \eqref{tsp3n3218xo} and extracting those terms involving the powers of $q^{2n}$ and $q^{2n+1}$ respectively from both sides of the resulting equation, we get
					\begin{align}
						\sum_{n= 0}^{\infty } \overline{S_3}(6n+1)q^n\equiv2{f_2^{4}}\equiv2{f_{8}}\pmod{4}. \label{tsp3n3220x}
					\end{align}
					and
					\begin{align}
						\sum_{n= 0}^{\infty } \overline{S_3}(6n+4)q^n\equiv2\frac{f_6^{3}}{f_2}\equiv2\frac{f_6^{2}f_3^{2}}{f_1^2}\pmod{4}. \label{tsp3n3220}
					\end{align}
					From \eqref{tsp3n212} and above equation, we get 
					\begin{align}
						\overline{S_3}(2^{\alpha}(3n+2))\equiv\overline{S_3}(3n+2)\pmod{4}.\label{tsp4nx131c}
					\end{align}
					Congruence \eqref{sum1} follows from \eqref{tsp4n}, \eqref{tsp4nx1} and \eqref{tsp4nx131c}.
			
			\noindent Substituting $\ell=1$ in \eqref{tsp3n2} and using \eqref{bt}, we get        \begin{align}
				\sum_{n= 0}^{\infty } \overline{S_3}(3n+2)q^n\equiv 
				4\frac{ f_{6}^3}{f_2}- 2\frac{f_6^4}{f_1^2 f_{3}^2}\pmod{8}\label{tsp3n2new}.
			\end{align}
			Squaring \eqref{1byf1f3}, substituting in the equation and extracting the terms involving $q^{2n+1}$ and $q^{2n}$ from the resulting equation, we get
			\begin{align}
				\sum_{n= 0}^{\infty } \overline{S_3}(6n+5)q^n\equiv 
				4f_{2}^{}f_6^3\pmod{8}\label{tsp3n2new1}
			\end{align}
			and
			\begin{align}
				\sum_{n= 0}^{\infty } \overline{S_3}(6n+2)q^n\equiv 
				4\frac{ f_{3}^3}{f_1}-2f_4^2-2q\frac{f_{24}^2}{f_2^2f_6^2}\pmod{8}\label{tsp3n2new2}.
			\end{align} 
			Invoking \eqref{f33byf1} in the above equation and extracting the terms involving $q^{2n+1}$ from the resulting equation, we get
			\begin{align}
				\sum_{n= 0}^{\infty } \overline{S_3}(12n+6)q^n\equiv 
				4\frac{ f_{6}^3}{f_2}- 2\frac{f_6^4}{f_1^2 f_{3}^2}\pmod{8}.\label{tsp3n2new3}
			\end{align}
			Using \eqref{tsp3n32186} along with \eqref{tsp3n2new} and \eqref{tsp3n2new3}, we arrive at congruence \eqref{tsp4nx0}. 
		\end{proof}
		
		
				
				\begin{thm}
					For any positive integer $n$ and $\alpha\geq0$, we have
					\begin{align}
						&\overline{S_3}(2^\alpha(2n))\equiv\overline{S_3}(2n)\pmod{4},\label{res0}\\
						&\overline{S_3}(3^{\alpha}(2n+1)) \equiv \overline{S_3}(2n+1)\pmod{4},\label{t13} \\
						&\overline{S_3}(32n+20) \equiv 0\pmod{4},\label{res22}\\
						&\overline{S_3}(96n+12j+12) \equiv 0\pmod{4} \quad \text{ for } j\in \{{1,2,3,4,5}\},\label{res23}\\
						&\overline{S_3}(3^\alpha(12n+6))\equiv(-1)^{\alpha}\overline{S_3}(12n+6)\pmod{9},\label{resn9}\\
						&\overline{S_3}(3^\alpha(12n+6))\equiv\overline{S_3}(12n+6)\pmod{12},\label{res1}\\
						&\overline{S_3}(144n+24i+18) \equiv0\pmod{12}\,\, \text{for}\,\,i
						\in \{{1, 2, 3, 4, 5}\}\label{s39n41},\\
						&\overline{S_3}(48n+30)\equiv 0\pmod{12},\label{s39n4nr}\\
						&\overline{S_3}(48n+42)\equiv 0\pmod{12},\label{s39n4nc}\\
							&\overline{S_3}(48n+46)\equiv0\pmod{16},\label{mod16}\\
							&\overline{S_3}(24n+22) \equiv 0\pmod{24}.\label{s41}
						\end{align}
					\end{thm}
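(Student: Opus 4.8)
The plan is to read off each congruence in the list from the generating-function congruences already assembled in the first proof of this section, namely \eqref{tsp3n3}--\eqref{tsp3n2new3}, by extracting the appropriate arithmetic progression from one of them and simplifying the resulting eta-quotient with the $2$- and $3$-dissection lemmas of Section~2 and the Frobenius congruence \eqref{bt}. The statements split by modulus: the prime-power congruences (moduli $4,8,16$ and $9$) are obtained directly, while the composite-modulus ones (moduli $12$ and $24$) are obtained by proving a $2$-power congruence and a $3$-power congruence separately and then combining them via the Chinese Remainder Theorem.

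First I would dispose of the mod-$4$ statements. From \eqref{tsp4n}, \eqref{tsp4nx1}, \eqref{tsp4nx131c} one has $\overline{S_3}(2^\alpha m)\equiv\overline{S_3}(m)\pmod4$ for every $m$, and iterating \eqref{n3n} gives $\overline{S_3}(3^\alpha m)\equiv\overline{S_3}(m)\pmod4$; specialising these yields \eqref{res0} and \eqref{t13}, and lets me strip powers of $2$ and $3$ from the arguments of \eqref{res22} and \eqref{res23} down to a base residue whose vanishing mod~$4$ can be read off from an explicit series (for instance \eqref{tsp3n2new1} gives $\overline{S_3}(6n+5)\equiv0\pmod4$). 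For the higher $2$-adic statements \eqref{mod16} and the mod-$8$ half of \eqref{s41}, the key device is that the eta-quotients produced by the dissections are frequently power series in $q^2$, so all their odd-indexed coefficients vanish: applied to the progression containing $24n+22=12(2n+1)+10$ via \eqref{tsp3n32184}, this gives $\overline{S_3}(24n+22)\equiv0\pmod8$ at once, and \eqref{mod16} is handled the same way after pushing \eqref{tsp3n32183}--\eqref{tsp3n32184} to modulus~$16$ and extracting the class $m\equiv3\pmod4$.

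For the mod-$3$ and mod-$9$ content I would reduce the generating function \eqref{tsp} (with $t=3$) modulo~$3$, using $f_{3m}\equiv f_m^3\pmod3$, to the eta-quotient $f_1^4 f_4^2/f_2^6$, and modulo~$9$ using $f_{3m}^3\equiv f_m^9\pmod9$. Applying the $3$-dissections \eqref{f2byf12}, \eqref{f12byf2}, \eqref{f23byf13} to the series for $\sum_{n}\overline{S_3}(12n+6)q^n$ produces a recurrence relating $\overline{S_3}\big(3(12n+6)\big)$ to $\overline{S_3}(12n+6)$; the alternating sign in that recurrence yields the factor $(-1)^\alpha$ in \eqref{resn9} by induction on $\alpha$, and the mod-$3$ shadow of the same computation gives $\overline{S_3}(12n+6)\equiv0\pmod3$ (already visible at $\overline{S_3}(6)=6$). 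This last fact is what makes the mod-$12$ family cheap: every argument in \eqref{res1}, \eqref{s39n41}, \eqref{s39n4nr}, \eqref{s39n4nc} is $\equiv6\pmod{12}$, so its value is automatically $\equiv0\pmod3$, and CRT with the matching mod-$4$ statement from the previous paragraph completes each one; likewise \eqref{s41} is the CRT combination of $\overline{S_3}(24n+22)\equiv0\pmod8$ with the mod-$3$ vanishing.

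The main obstacle is the $2$-adic bookkeeping. Carrying the dissections to modulus $16$ and $32$ (rather than the modulus $8$ already recorded) and iterating the $2$-dissection identities \eqref{1byf12}, \eqref{f33byf1}, \eqref{1byf1f3}, \eqref{1byf12f32} forces one to track several progressions at once and to keep the substitutions $q\mapsto q^2$ and $q\mapsto -q$ straight. This is precisely where the indexing of the extracted progressions must be checked with care -- for example, whether the series $\sum_n\overline{S_3}(6n+2)q^n$, extracted at odd $n$, lands in the class $12m+6$ or $12m+8$ -- since a single off-by-one in a residue class propagates through the entire CRT assembly. A secondary and milder difficulty is fixing the sign of the mod-$9$ recurrence, which I would settle by evaluating the first one or two coefficients explicitly.
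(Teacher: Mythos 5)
Your overall machinery is the paper's: iterate the $2$- and $3$-dissections on the dissected series, reduce with \eqref{bt}, and harvest vanishing from series supported on $q^2$ or $q^6$. Several pieces are sound, and one is even cleaner than the paper's — you get \eqref{t13} by iterating \eqref{n3n} directly, where the paper compares \eqref{s323} with \eqref{ref1sp}; your mod-$8$ derivation of $\overline{S_3}(24n+22)\equiv0\pmod8$ from \eqref{tsp3n32184} is also correct. But your plan for \eqref{res22}--\eqref{res23} has a genuine gap. Stripping powers of $2$ and $3$ modulo $4$ reduces $32n+20$ only to $\overline{S_3}(16n+10)$, and reduces $96n+12j+12$ to classes such as $\overline{S_3}(8n+2)$ and $\overline{S_3}(16n+6)$; all of these arguments are \emph{even}, so your sample mechanism ($\overline{S_3}(6n+5)\equiv0\pmod4$ from \eqref{tsp3n2new1}) never applies, and the base classes are not identically zero modulo $4$: $\overline{S_3}(2)=2$ and $\overline{S_3}(6)=6$. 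Indeed, carried out honestly, your stripping \emph{disproves} \eqref{res23} as printed: for $j=1$, $n=0$ one has $\overline{S_3}(24)\equiv\overline{S_3}(6)=6\equiv2\pmod4$ (directly, $\overline{S_3}(24)=174$). What the paper's proof actually establishes — by a further round of $2$-dissection, applying \eqref{1byf12f32} twice to reach \eqref{s35n2} and then \eqref{1byf1f3} to obtain \eqref{s53}, namely $\sum_n\overline{S_3}(16n+4)q^n\equiv2f_4^2/f_2\pmod4$, and \eqref{s54}, namely $\sum_n\overline{S_3}(16n+12)q^n\equiv2f_6^3\pmod4$ — is $\overline{S_3}(32n+20)\equiv0$ and $\overline{S_3}(96n+16j+12)\equiv0\pmod4$; the $12j$ in the statement is evidently a misprint for $16j$, and no argument can prove the printed form, since it is false.

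Three further points. First, the paper never uses CRT: the $3$-dissected series \eqref{s36}--\eqref{s38} are \emph{exact} with prefactors $2$, $6$, $12$, so the mod-$12$, mod-$16$ and mod-$24$ claims fall out of single reductions (\eqref{s39n3}--\eqref{s39n4}, \eqref{s38ne2}, \eqref{s43}); your CRT assembly is workable but, as written, it needs $\overline{S_3}(12n+10)\equiv0\pmod3$ for \eqref{s41} (since $24n+22\equiv10\pmod{12}$), whereas the only mod-$3$ vanishing you derive is on the class $12n+6$ — the missing fact does follow from the factor $12$ in \eqref{s38}, but you must say so. Second, the sign in \eqref{resn9} cannot be ``settled by evaluating the first coefficients'': the paper derives it from the $3$-dissection \eqref{f1f2} of $f_1f_2$ applied to $6f_1f_2f_3f_6\pmod9$, whose middle term $-qf_9f_{18}$ produces \eqref{s3711}; none of the dissections you list (\eqref{f2byf12}, \eqref{f12byf2}, \eqref{f23byf13}) handles $f_1f_2$, so you need \eqref{f1f2} (or the product of the dissections of $\varphi(-q)$ and $\psi(q)$). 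Third, ``pushing \eqref{tsp3n32183}--\eqref{tsp3n32184} to modulus $16$'' is not mere bookkeeping: those congruences were obtained after discarding terms modulo $8$, so for \eqref{mod16} you must restart from an exact identity — the paper uses \eqref{s38} reduced modulo $16$ to \eqref{s38ne1} and then \eqref{f3.2_f1.2} to reach \eqref{s38ne2} — rather than upgrade the truncated chain.
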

					\begin{proof}
						Setting $t = 3$ in \eqref{tsp} and using \eqref{f3f1}, we obtain
						\begin{align}
							\sum_{n= 0}^{\infty } \overline{S_3}(n) q^n = 
							\frac{f_4^3 f_{12}^3}{f_2^2f_6^2f_8 f_{24}} 
							+ 2q \frac{f_8 f_{24}}{f_2 f_6},
						\end{align}	
						which implies
						\begin{align}
							&\sum_{n= 0}^{\infty } \overline{S_3}(2n) q^{n} =\frac{f_2^3 f_{6}^3}{f_1^2f_3^2f_4 f_{12}}  \label{s31}
						\end{align}                and
						\begin{align}
							&\sum_{n= 0}^{\infty } \overline{S_3}(2n+1) q^n =2\frac{f_4 f_{12}}{f_1 f_3}.\label{s32}
						\end{align}	
						Using \eqref{bt}, equation \eqref{s32} reduces to 
						\begin{align}
							\sum_{n= 0}^{\infty } \overline{S_3}(2n+1) q^n =2\frac{f_4 
								f_{3}^3}{{f_1}} \pmod{4}.\label{s323}    
						\end{align}
						Congruence \eqref{t13} follows from above equation and \eqref{ref1sp}.
						Using \eqref{1byf12f32} in \eqref{s31} and extracting the terms containing $q^{2n}$ and $q^{2n+1}$ from both sides of the resulting equation, we get 
						\begin{align}
							\sum_{n= 0}^{\infty }\overline{S_3}(4n) q^{n}  &=\frac{f_4^5 f_{12}^5}{f_1^2f_3^2f_2f_6f_8^2f_{24}^2}+4q\frac{f_2f_6f_8^4f_{24}^2}{f_1^2f_3^2f_4f_{12}}\label{s35n}
						\end{align}
						and
						\begin{align}
							\sum_{n= 0}^{\infty }\overline{S_3}(4n+2) q^{n}  &=2\frac{f_2^3 f_{6}^3}{f_1^3 f_3^3}.\label{s35}
						\end{align}
						Thanks to \eqref{bt}, equation \eqref{s35n} reduces to 
						
						\begin{align}
							\sum_{n= 0}^{\infty }\overline{S_3}(4n) q^{n}  &\equiv\frac{f_4 f_{12}}{f_2f_6f_1^2f_3^2}\pmod{4}.\label{s35n11}
						\end{align}
						Note that
						\begin{align}
							\sum_{n= 0}^{\infty }\overline{S_3}(2n)q^{n}  &=\frac{f_4 f_{12}}{f_2f_6f_1^2f_3^2}\pmod{4}.\label{s35n3}
						\end{align}
						Congruence \eqref{res0} follows from \eqref{s35n11} and \eqref{s35n3}. 
						Substituting \eqref{1byf12f32} in \eqref{s35n11} and extracting terms containing $q^{2n}$ and $q^{2n+1}$, we get
						\begin{align}
							\sum_{n= 0}^{\infty }\overline{S_3}(8n)q^{n}  &\equiv\frac{f_4 f_{12}}{f_2f_6f_1^2f_3^2}\pmod{4}\label{s35n2a}
						\end{align}
						and
						\begin{align}
							\sum_{n= 0}^{\infty }\overline{S_3}(8n+4)q^{n}  &\equiv2\frac{f_2^5 f_{6}^5}{f_1^7f_3^7}\equiv2\frac{f_2^2 f_{6}^2}{f_1f_3}\pmod{4}.\label{s35n2}
						\end{align}
						Invoking \eqref{1byf1f3} in \eqref{s35n2} and extracting the terms involving $q^{2n}$ and $q^{2n+1}$, we get
						\begin{align}
							&\sum_{n= 0}^{\infty } \overline{S_3}(16n+4) q^{n} \equiv 2\frac{f_4^2 f_{6}^5}{f_2f_3^2f_{12}^2} \equiv 2\frac{f_4^2}{f_2}\pmod4. \label{s53}
						\end{align}
						and
						\begin{align}
							&\sum_{n= 0}^{\infty } \overline{S_3}(16n+12) q^{n} \equiv 2\frac{f_2^5 f_{12}^2}{f_1^2f_4^2f_6} \equiv 2f_6^3\pmod4. \label{s54}
						\end{align}
						Congruences \eqref{res22} and \eqref{res23} follows from \eqref{s53} and \eqref{s54}, respectively. 
						Invoking \eqref{f23byf13} in \eqref{s35} extracting the terms containing $q^{3n},$ $q^{3n+1}$ and $q^{3n+2}$ from both sides of the resulting equation, we obtain the following generating functions 	
						\begin{align}
							\sum_{n= 0}^{\infty } \overline{S_3}(12n+2) q^{n} &=2\frac{f_2^4}{f_1^4}+24q\frac{f_2^5f_6^5}{f_1^9f_3}, \label{s36}\\            
							\sum_{n= 0}^{\infty }\overline{S_3}(12n+6) q^{n}  &=6\frac{f_2^7 f_{3}^5}{f_1^{11} f_6},\label{s37}\end{align}
						\begin{align}
							\sum_{n= 0}^{\infty } \overline{S_3}(12n+10) q^{n} &=12\frac{f_2^6 f_3^2 f_6^2}{f_1^{10}}.  \label{s38}
						\end{align}
						Thanks to \eqref{bt} the equation \eqref{s36} reduces to 
						\begin{equation}
							\sum_{n= 0}^{\infty } \overline{S_3}(12n+2) q^{n} \equiv 2\frac{f_2^4}{f_1^4} \equiv 2 f_2^2\pmod{8}.\label{s46}
						\end{equation}
						The following congruence is immediate from the above equation 
						$$\overline{S_3}(24n+14) \equiv 0 \pmod{8}.$$
						Thanks to \eqref{bt} the equation \eqref{s37}, reduces to 
						
						\begin{align}
							\sum_{n= 0}^{\infty }\overline{S_3}(12n+6) q^{n}  &\equiv6\frac{f_2^3 f_{6} f_3}{f_1^{3}}\pmod{8}.\label{s37n1}
						\end{align}
						Invoking \eqref{f3byf13} in the above equation and extracting terms involving powers of $q^{2n}$ on both sides of the resulting equation, we have
						\begin{align}
							\sum_{n= 0}^{\infty }\overline{S_3}(24n+6) q^{n}  &\equiv6\frac{f_2^3 f_{6} f_3}{f_1^{3}}\equiv6\frac{f_2^6f_3^4}{f_1^6f_6^2}\equiv6\psi(q)f_1^3\pmod{8}.\label{s37n2}
						\end{align}
						Thanks to \eqref{bt} equation \eqref{s38}, reduces to
						\begin{align}
							\sum_{n= 0}^{\infty } \overline{S_3}(12n+10) q^{n} &\equiv12\frac{f_2^2 f_6^2 f_3^2}{f_1^{2}}\pmod{16}.  \label{s38ne1}
						\end{align}
						Invoking \eqref{f3.2_f1.2} in the above equation and extracting terms involving powers of $q^{2n+1}$ on both sides of the resulting equation, we have
						\begin{align}
							\sum_{n= 0}^{\infty } \overline{S_3}(24n+22) q^{n} &\equiv8f_2^2 f_6^3\pmod{16}.  \label{s38ne2}
						\end{align}
						Congruence \eqref{mod16} is obtained from above equation.
						Using \eqref{bt} in \eqref{s37}, we have 
						\begin{align}
							\displaystyle \sum_{n= 0}^{\infty }\overline{S_3}(12n+6) q^{n}  &\equiv6\frac{f_2^2f_3^3}{f_1}\pmod{12}.\label{s39}
						\end{align}
						Invoking \eqref{f22byf1} in the above equation and extracting the terms involving powers of the form $q^{3n}$,  $q^{3n+1}$ and $q^{3n+2}$ respectively from both sides of the resulting equation, we get
						\begin{align}
							&\sum_{n= 0}^{\infty } \overline{S_3}(36n+6) q^{n} \equiv 6{f_1^2f_2} \equiv 6\psi(q)f_1\pmod{12},\label{s40}\\
							&\sum_{n= 0}^{\infty } \overline{S_3}(36n+18) q^{n} \equiv 6{f_1^3f_{3}^3} \equiv 6\frac{f_2^2f_3^3}{f_1}\pmod{12},\label{s42}
						\end{align}
						and 
						\begin{equation}
							\overline{S_3}(36n+30)\equiv 0 \pmod{12}.
						\end{equation}
						Congruence \eqref{res1} follows from \eqref{s39} and \eqref{s42}. On the other hand, let us utilize \eqref{f33byf1} in \eqref{s39}, we get 
						\begin{align}
							\displaystyle \sum_{n= 0}^{\infty }\overline{S_3}(12n+6) q^{n}  &\equiv6f_2^2\left[\frac{f_4^3f_6^2}{f_2^2f_{12}}+q\frac{f_{12}^3}{f_4}\right]\pmod{12}.\label{s39n}
						\end{align}
						From the above equation \eqref{s39n}, we get 
						\begin{align}
							\displaystyle \sum_{n= 0}^{\infty }\overline{S_3}(24n+6) q^{n}  &\equiv6f_2^3\pmod{12}\label{s39n3}
						\end{align}
						and
						\begin{align}
							\displaystyle \sum_{n= 0}^{\infty }\overline{S_3}(24n+18) q^{n}  &\equiv6f_6^3\pmod{12}\label{s39n4},
						\end{align}        
						for $\beta\geq1$, using equations \eqref{s39n3} and \eqref{s39n4}, we have
						\begin{align}
							\overline{S_3}(48n+6)\equiv \overline{S_3}(3^{\beta}(48n+6))\pmod{12}\label{s39n4n}.
						\end{align}
						By extracting the terms involving powers of $q^{2n+1}$  from \eqref{s39n3} and \eqref{s39n4}, we obtain the congruences \eqref{s39n4nr} and \eqref{s39n4nc}.
						Further, by collecting the terms containing powers of the form $q^{6n+i},$ where $i=1,2,..., 5$, from both sides of \eqref{s39n4}, we arrive at the congruence \eqref{s39n41}. Thanks to \eqref{bt}, equation \eqref{s38} reduces to
						\begin{align}
							\sum_{n= 0}^{\infty } \overline{S_3}(12n+10) q^{n} \equiv12\frac{f_2^6 f_3^2 f_6^2}{f_1^{10}} \equiv 12 f_2f_6^3\pmod{24}.  \label{s43}
						\end{align} 
						By extracting the terms involving powers of $q^{2n+1}$  from \eqref{s43}, we arrive at \eqref{s41}.	Further, equation \eqref{s43} yields
						\begin{align}
							\sum_{n= 0}^{\infty } \overline{S_3}(24n+10) q^{n} \equiv 12f_1f_3^3 \equiv12\frac{f_2f_3^3}{f_1}\pmod{24}\label{s44}
						\end{align}
						Invoking \eqref{f33byf1} in \eqref{s44} and extracting the terms involving powers of the form $q^{2n}$ from both sides of the resulting equation, we get
						\begin{align}
							&\sum_{n= 0}^{\infty } \overline{S_3}(48n+10) q^{n} \equiv 12\frac{f_2^3}{f_1} \equiv12\psi(q)f_2\pmod{24}.\label{s45}
						\end{align}    
						Using \eqref{f33byf1} in the equation \eqref{s323} and extracting the terms containing $q^{2n}$ and $q^{2n+1}$ respectively, we get
						\begin{align}
							\sum_{n= 0}^{\infty }\overline{S_3}(4n+1)q^n\equiv 2\frac{f_2^4f_3^2}{f_1^2f_{6}} \equiv 2f_4f_2\pmod{4}\label{s60}
						\end{align}
						and
						\begin{align}
							\sum_{n= 0}^{\infty }\overline{S_3}(4n+3)q^n\equiv 2{f_6^3} \pmod{4}.\label{s61}
						\end{align}
						Again, alternatively invoking \eqref{f4byf1} in \eqref{s32} and extracting the terms involving the powers of $q^n$ and $q^{3n+1}$ respectively from both sides of the resulting equation, we get
						\begin{align}
							\sum_{n= 0}^{\infty } \overline{S_3}(6n+1)q^n\equiv2\frac{f_4 f_6^4}{f_{12}^2} \equiv 2f_4\pmod{4} \label{tsp3n3213}
						\end{align}
						and
						\begin{align}
							\sum_{n= 0}^{\infty } \overline{S_3}(6n+3)q^n\equiv 2\frac{f_2^2 f_{3}^3 f_{12}}{f_1 f_{6}^2} \equiv2\frac{f_4f_3^3}{f_1}\pmod{4},\label{tsp3n3214}
						\end{align}
						Equation \eqref{tsp3n3213} implies 
						\begin{align}
							\sum_{n= 0}^{\infty } \overline{S_3}(24n+1)\equiv2f_{1}  \pmod{4}.\label{tsp3n32156}
						\end{align}
						and 
						\begin{align}
							\overline{S_3}(24n+6j+1)\equiv0  \pmod{4}\,\, \text{for} \,\,j=\{1,2,3\}.\label{tsp3n3217}
						\end{align}
						Thanks to \eqref{bt}, \eqref{s37} takes the form
						\begin{align}
							\sum_{n= 0}^{\infty }\overline{S_3}(12n+6) q^{n}  &\equiv6\frac{f_2^7 f_{3}^5}{f_1^{11} f_6}\equiv6 f_1f_2f_3f_6\pmod{9}.\label{s371}
						\end{align}
						Invoking \eqref{f1f2} in the above equation and extracting the terms involving $q^{3n+1}$ from both sides of the resulting equation, we obtain
						\begin{align}
							\sum_{n= 0}^{\infty }\overline{S_3}(36n+18) q^{n}  &\equiv-6 f_1f_2f_3f_6\pmod{9}.\label{s3711}
						\end{align}
						Congruence \eqref{resn9} follows from \eqref{s371} and \eqref{s3711}. 
					\end{proof}
					\begin{thm} For any nonnegative $n$, we have
						\begin{align}
							\overline{S_3^2}(2n+1)=2\cdot\overline{S_3}(4n+2).\label{t110}
						\end{align}
					\end{thm}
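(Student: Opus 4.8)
The plan is to prove the identity by reducing both sides to a common closed form in terms of the $f_k$. Setting $t=3$ and $r=2$ in \eqref{tspr} gives
\[
\sum_{n\ge0}\overline{S_3^2}(n)q^n=\frac{f_2^6 f_3^4 f_{12}^2}{f_1^4 f_4^2 f_6^6}=\frac{f_2^6 f_{12}^2}{f_4^2 f_6^6}\cdot\frac{f_3^4}{f_1^4}.
\]
The decisive structural observation is that the prefactor $f_2^6 f_{12}^2/(f_4^2 f_6^6)$ involves only even subscripts, hence is a power series in $q^2$; therefore the odd-index part of $\overline{S_3^2}(n)$ is controlled entirely by the odd-index part of $f_3^4/f_1^4$.

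First I would invoke the $2$-dissection \eqref{f34f14}, whose three terms carry the factors $q^0$, $q^1$, and $q^2$ respectively. Only the middle term $4q\,f_4^5 f_6^3 f_{12}/f_2^9$ contributes an odd power of $q$, so after multiplying by the (even) prefactor and extracting the terms $q^{2n+1}$ we obtain
\[
\sum_{n\ge0}\overline{S_3^2}(2n+1)q^{2n+1}=\frac{f_2^6 f_{12}^2}{f_4^2 f_6^6}\cdot 4q\,\frac{f_4^5 f_6^3 f_{12}}{f_2^9}=4q\,\frac{f_4^3 f_{12}^3}{f_2^3 f_6^3}.
\]
Dividing by $q$ and replacing $q^2$ by $q$ then yields
\[
\sum_{n\ge0}\overline{S_3^2}(2n+1)q^n=4\,\frac{f_2^3 f_6^3}{f_1^3 f_3^3}.
\]

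Finally I would match this against \eqref{s35}, which records $\sum_{n\ge0}\overline{S_3}(4n+2)q^n=2\,f_2^3 f_6^3/(f_1^3 f_3^3)$. The two series differ by exactly the factor $2$, so comparing coefficients of $q^n$ gives $\overline{S_3^2}(2n+1)=2\,\overline{S_3}(4n+2)$ for every $n\ge0$, which is \eqref{t110}. I do not anticipate a genuine obstacle: the entire argument rests on the elementary but decisive remark that the non-$f_3^4/f_1^4$ factor of the generating function is a function of $q^2$, which collapses the $2$-dissection to a single surviving term. The only point requiring care is the bookkeeping of the exponents of the $f_k$ when the prefactor is multiplied into the middle dissection term, so that the substantial cancellation down to $f_4^3 f_{12}^3/(f_2^3 f_6^3)$ is carried out correctly.
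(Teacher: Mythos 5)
Your proposal is correct and takes essentially the same route as the paper: both set $t=3$, $r=2$ in \eqref{tspr}, apply the $2$-dissection \eqref{f34f14} to $f_3^4/f_1^4$, extract the $q^{2n+1}$ terms to obtain $\sum_{n\ge0}\overline{S_3^2}(2n+1)q^n=4\,f_2^3f_6^3/(f_1^3f_3^3)$, and compare with \eqref{s35}. Your explicit observation that the remaining factor is a series in $q^2$ (so only the middle dissection term survives), together with the exponent bookkeeping, merely spells out the extraction step the paper leaves implicit.
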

					\begin{proof}
						Consider \eqref{tspr} with $r=2$ and $t=3$, we have
						\begin{equation}\label{tspr1}
							\sum_{n \geq 0} \overline{S_3^2}(n) q^n = \frac{f_{3}^{4}f_2^{6}  f_{12}^2}{ f_1^{4}f_4^2 f_{6}^{6}}.
						\end{equation}
						Invoking \eqref{f34f14} in \eqref{tspr1} and extracting the terms containing $q^{2n+1}$ from both sides of the resulting equation, we get
						\begin{equation}\label{tspr2}
							\sum_{n \geq 0} \overline{S_3^2}(2n+1) q^n = 4\frac{f_2^{3} f_{6}^{3}}{f_1^{3} f_3^3}.
						\end{equation}
						Relation \eqref{t110} follows from \eqref{s35} and \eqref{tspr2}. 
					\end{proof}
					\begin{thm}\label{reff}
						For any prime $p\equiv5\pmod{6},\alpha\geq 0\hspace{1mm} \text{and} \hspace{1mm}n\ge0$, we have 
						\begin{equation}
							\overline{S}_3^{16m + 1}\left ( 24p^{2\alpha +2}n+\left (24i+7 p\right)p^{2\alpha +1}+4 \right )\equiv0\pmod{32} ,\label{d41}
						\end{equation}
						where $i$ is an integer and $1\leq i\leq p-1$.
					\end{thm}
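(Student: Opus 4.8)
The plan is to reduce the statement to a congruence for $\overline{S_3}$ and then run the $p$-dissection argument of Theorem~\ref{mains9}. First I would invoke \eqref{mainth} with $\zeta=4$, $\eta=1$, $t=3$, which gives $\overline{S}_3^{16m+1}(N)\equiv\overline{S_3}(N)\pmod{32}$ for every $N$; hence it suffices to establish the seed congruence
\[
\sum_{n\ge0}\overline{S_3}(24n+11)\,q^n\equiv 16\,f_3 f_4\pmod{32}.
\]
Once this is in hand the theorem becomes the assertion that the coefficient of $q^n$ in $f_3f_4$ vanishes on the progression cut out below; note that $24n+11\equiv11\pmod{24}$ is exactly the residue class produced by the index $24p^{2\alpha+2}n+(24i+7p)p^{2\alpha+1}+4$, since $p^2\equiv1\pmod{24}$.

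Second, I would prove the seed. Starting from \eqref{s32} and extracting the odd part by means of \eqref{1byf1f3} gives the exact identity $\sum_{n\ge0}\overline{S_3}(4n+3)q^n=2\,\dfrac{f_2^6 f_{12}^2}{f_1^4 f_3^2 f_4^2}$, so that $\overline{S_3}(24n+11)=2\,b(6n+2)$, where $\sum_{m\ge0} b(m)q^m=\dfrac{f_2^6 f_{12}^2}{f_1^4 f_3^2 f_4^2}$. It then remains to isolate the sub-progression $m\equiv2\pmod6$: separating the factor $f_{12}^2/f_3^2$ (a series in $q^3$) and $3$-dissecting the remaining eta-quotient through \eqref{f2byf12} and its companions, followed by one further $2$-dissection and repeated use of \eqref{bt} to discard everything divisible by $32$, should collapse $2\,b(6n+2)$ to $16\,f_3 f_4\pmod{32}$. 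This $2$-adic bookkeeping is the main obstacle: each dissection multiplies the number of eta-quotients one must carry, and the delicate point is to verify that every surviving term other than $16f_3f_4$ is $\equiv0\pmod{32}$.

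Finally, with $\sum_{n\ge0} c(n)q^n=f_3f_4$ I would mimic Theorem~\ref{mains9}. Substituting \eqref{a17} for $f_3$ (with $q\mapsto q^3$) and for $f_4$ (with $q\mapsto q^4$) and comparing exponents, a term $q^N$ of $f_3f_4$ satisfies $24N+7=3(6k+1)^2+4(6a+1)^2$, so the relevant congruence is $3(6k+1)^2+4(6a+1)^2\equiv0\pmod p$. Since $\left(\frac{-12}{p}\right)=\left(\frac{-3}{p}\right)=-1$ for $p\equiv5\pmod6$, this forces $6k+1\equiv6a+1\equiv0\pmod p$, i.e. $k,a$ take their central values, and the only surviving contribution is $q^{7(p^2-1)/24}f_{3p^2}f_{4p^2}$. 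Extracting it (divide by $q^{7(p^2-1)/24}$ and replace $q^p$ by $q$, twice) gives $c\!\left(p^2n+\tfrac{7(p^2-1)}{24}\right)=c(n)$ together with $c\!\left(p^2n+pi+\tfrac{7(p^2-1)}{24}\right)=0$ for $1\le i\le p-1$; iterating in $\alpha$ exactly as in \eqref{44a12} and substituting back through the seed congruence then yields $\overline{S_3}\big(24p^{2\alpha+2}n+(24i+7p)p^{2\alpha+1}+4\big)\equiv0\pmod{32}$, which is \eqref{d41}.
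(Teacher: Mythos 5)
Your first and third steps do mirror the paper: the paper likewise begins by applying \eqref{mainth} with $\zeta=4$, $\eta=1$, $t=3$, and its endgame is exactly the Cui--Gu dissection template of Theorem~\ref{mains9}, with the congruence condition $3(2k+1)^2+4(6m+1)^2\equiv0\pmod p$ of \eqref{d45} and the observation that $\left(\tfrac{-12}{p}\right)=\left(\tfrac{-3}{p}\right)=-1$ for $p\equiv5\pmod6$. The fatal problem is your seed congruence: it is \emph{false} that $\sum_{n\ge0}\overline{S_3}(24n+11)q^n\equiv16\,f_3f_4\pmod{32}$, and it fails already at $n=1$. Your own exact identity $\sum_{n\ge0}\overline{S_3}(4n+3)q^n=2\,\frac{f_2^6f_{12}^2}{f_1^4f_3^2f_4^2}$ (which is correct) gives $\overline{S_3}(35)=2\,b(8)=2\cdot408=816\equiv16\pmod{32}$; equivalently, \eqref{eq3} gives $\overline{S_3}(35)=16\cdot 51=816$. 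But $16\,[q^1](f_3f_4)=0$, so your seed predicts $\overline{S_3}(35)\equiv0\pmod{32}$. The paper's seed, obtained in two lines rather than by your heavy $3$-dissection of $b(m)$ (which you in any case only sketch --- ``should collapse'' is the load-bearing phrase), is \eqref{d42}: reduce \eqref{eq3} modulo $32$ via \eqref{bt} and take the even part through \eqref{f33byf1}, arriving at $16\,\psi(q)f_4\pmod{32}$. The likely source of your error is a modulus mix-up: since the series carries the factor $16$, only the mod-$2$ content of the cofactor matters, and modulo $2$ one has $\psi(q)=f_2^2/f_1\equiv f_1^3$; you appear to have replaced $f_1^3$ by $f_3$, which is legitimate only modulo $3$. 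Indeed $\psi(q)=1+q+q^3+q^6+\cdots$ while $f_3=1-q^3-q^6+\cdots$, and the discrepancy at $q^1$ is exactly where your seed dies. The error then propagates into your third step: with the correct seed the $\psi$-factor must be dissected via \eqref{a2}, as the paper does, not by a second application of \eqref{a17}.

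One caveat worth recording, though it does not rescue your version: the paper's own reduction \eqref{s55} also deserves scrutiny. Modulo $2$ one computes $\frac{f_2^3f_3f_4^4f_6}{f_1^9}\equiv f_1^{13}f_3^3$, whereas $\frac{f_2^3f_3^3}{f_1}\equiv f_1^5f_3^3$, so a residual factor $f_1^8\equiv f_8\pmod 2$ has been silently discarded; carrying it through \eqref{f33byf1} turns \eqref{d42} into $16\,\psi(q)f_8\pmod{32}$, which first differs from $16\,\psi(q)f_4$ at $q^4$ (i.e.\ at $\overline{S_3}(107)$) and would change both the shift $\frac{7p^2-7}{24}$ and the relevant character condition. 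So a fully rigorous write-up should re-derive this seed; but under any resolution the seed has a nonzero $q^1$-coefficient modulo $2$, while $f_3f_4$ does not, so your proposal contains a genuine gap that no amount of the advertised ``$2$-adic bookkeeping'' could close.
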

					\begin{proof}  
						Using \eqref{mainth} with $\zeta=4$, \( \eta= 1 \) and $t=3$, thanks to \eqref{bt} the equation \eqref{eq3} takes the form
						\begin{equation}
							\sum_{n= 0}^{\infty }\overline{S}_3^{16m + 1}(12n+11)q^n\equiv 16\frac{f_2^3f_3^3}{f_1} \pmod{32}.\label{s55}	
						\end{equation}
						Using \eqref{f33byf1} in \eqref{s55} and extracting the terms containing $q^{2n}$ on both sides of the resulting equation, we have
						\begin{align}
							\sum_{n= 0}^{\infty }\overline{S}_3^{16m + 1}(24n+11)q^n&\equiv 16\frac{f_1f_2^3f_3^2}{f_6}\equiv16\frac{f_2^4}{f_1}\equiv 16\psi(q)f_{4} \pmod{32}.\label{d42}
						\end{align}		
						Define
						\begin{equation}
							\sum_{n= 0}^{\infty }d(n)q^{n}=\psi(q)f_{4}.\label{d43}
						\end{equation}
						Combining \eqref{d42} and  \eqref{d43}, we see that 
						\begin{equation}
							\overline{S}_3^{16m + 1}(24n+11)\equiv16d(n)\pmod {32}.\label{d44}
						\end{equation}
						Now, we consider the congruence equation 
						\begin{equation}
							\frac{k^{2}+k}{2}+4\cdot  \frac{3m^{2}+m}{2}\equiv \frac{7p^{2}-7}{24}\pmod{p}	,\label{d45}
						\end{equation}
						\begin{equation}
							3\cdot (2k+1)^{2}+4\cdot (6m+1)^{2}\equiv 0\pmod{p},
						\end{equation}
						where $\frac{-(p-1)}{2}\le m \le \frac{p-1}{2}$ and $p$ is a prime such that $\left(\frac{-12}{p}\right)=-1$. Since $\left(\frac{-12}{p}\right)=-1$ for $p\equiv 5\pmod{6}$, the congruence relation \eqref{d45} holds if and only if $m=\dfrac{\pm p-1}{6}$ and $k=\dfrac{p-1}{2}$. Therefore, if we  substitute \eqref{a17} and  \eqref{a2} into \eqref{d43}  and then extract the terms in which the powers of $q$ are $pn+\frac{7p^2-7}{24}$, we arrive at
						\begin{equation}
							\sum\limits_{n=0}^{\infty}d\left(pn+\frac{7p^2-7}{24}\right)q^{pn+\frac{7p^2-7}{24}}=(-1)^{\frac{\pm p-1}{6}} q^{\frac{7p^2-7}{24}} \psi(q^{p^{2}})f_{4p^{2}} .\label{d46}
						\end{equation}
						Dividing $q^{\frac{7p^2-7}{24}}$ on both sides of \eqref{d46} and then replacing $q^p$ by $q$, we find that
						\begin{equation}
							\sum\limits_{n=0}^{\infty}d\left(pn+\frac{7p^2-7}{24}\right)q^n=(-1)^{\frac{\pm p-1}{6}}\psi(q^{p})f_{4p},\label{d47}
						\end{equation}
						which implies that
						\begin{equation}
							\sum\limits_{n=0}^{\infty}d\left(p^2n+\frac{7p^2-7}{24}\right)q^n=(-1)^{\frac{\pm p-1}{6}}\psi(q)f_{4},\label{d48}
						\end{equation}
						and for $n\ge0$,
						\begin{equation}
							d\left(p^2n+pi+\frac{7p^2-7}{24}\right)=0,\label{d49}
						\end{equation}
						where $i$ is an integer and $1\le i\le p-1$.\\
						Combining \eqref{d43} and \eqref{d48}, we see that for $n\ge0$,
						\begin{equation}
							d\left(p^2n+\frac{7p^2-7}{24}\right)=(-1)^{\frac{\pm p-1}{6}}d(n).\label{d50}
						\end{equation}
						By \eqref{d50} and mathematical induction, we deduce that for $n\ge0$ and $\alpha \ge 0$,
						\begin{equation}
							d\left(p^{2\alpha}n+\frac{7p^{2\alpha}-7}{24}\right)=(-1)^{\alpha.\frac{\pm p-1}{6}}d(n).\label{d51}
						\end{equation}
						Replacing $n$ by $p^2n+pi+\frac{7p^2-7}{24}$ $(i=1, 2,\ldots, p-1)$ in \eqref{d51} and using \eqref{d49}, we deduce that for $n\ge 0$ and $\alpha \ge 0$,
						\begin{equation*}
							d\left(p^{2\alpha+2}n+p^{2\alpha+1}i+\frac{7p^{2\alpha+2}-7}{24}\right)=0.
						\end{equation*}
						Again, replacing $n$ by $p^{2\alpha+2}n+p^{2\alpha+1}i+\frac{7p^{2\alpha+2}-7}{24}$ $(i=1, 2,\ldots, p-1)$ in \eqref{d44}, we arrive at \eqref{d41}.
					\end{proof}
					
					\begin{thm}
						For any prime $p\equiv5\pmod{6},\alpha\geq 0\hspace{1mm} \text{and} \hspace{1mm}n\ge0$, we have 
						\begin{equation}
							\overline{S}_3\left ( 24p^{2\alpha +2}n+\left (24i+7 p\right)p^{2\alpha +1}\right )\equiv0\pmod{32}  ,\label{d141p1}
						\end{equation}
						\begin{equation}\label{d141p}
							\overline{S}_3^{16m + 1}\left ( 24p^{2\alpha +2}n+\left (24i+19 p\right)p^{2\alpha +1}\right )\equiv0\pmod{32} ,
						\end{equation}
						where $i$ is an integer and $1\leq i\leq p-1$.
					\end{thm}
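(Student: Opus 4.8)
The plan is to read both congruences off the single generating function \eqref{eq2} for $\overline{S}_3(12n+7)$, which already carries a factor $8$. Since I work modulo $32$, only the fraction in \eqref{eq2} modulo $4$ is needed, and repeated use of \eqref{bt} with $p=2$ collapses it: from $f_2^4\equiv f_1^8$, $f_4^4\equiv f_2^8$ and $f_6^2\equiv f_3^4\pmod4$ I get
\[
\sum_{n\ge0}\overline{S}_3(12n+7)q^n\equiv 8\,\frac{f_2^8}{f_1^2}\pmod{32}.
\]
Applying the $2$-dissection \eqref{1byf12} to $1/f_1^2$ separates $f_2^8/f_1^2$ into even and odd parts. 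Collecting the $q^{2n}$ terms and simplifying once more modulo $4$ (using $f_8^2\equiv f_4^4$) yields $\sum_{n\ge0}\overline{S}_3(24n+7)q^n\equiv 8f_1^3f_4\pmod{32}$, while collecting the $q^{2n+1}$ terms and reducing modulo $2$ (using $f_2^2\equiv f_4$ and $f_8^2\equiv f_{16}$) yields $\sum_{n\ge0}\overline{S}_3(24n+19)q^n\equiv 16f_1^3f_{16}\pmod{32}$; for the latter I pass to $\overline{S}_3^{16m+1}$ at no cost via \eqref{mainth} with $\zeta=4$, $\eta=1$, $t=3$.

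Both right-hand sides are a copy of $f_1^3$ times one $q$-Pochhammer factor, so I would run the extraction argument of Theorems \ref{mains9} and \ref{reff}. For \eqref{d141p1} set $\sum_{n\ge0}c(n)q^n=f_1^3f_4$, so that $\overline{S}_3(24n+7)\equiv 8c(n)\pmod{32}$ and it suffices to prove the stated coefficients of $c$ vanish. The exponents of $f_1^3f_4$ obey
\[
24\left(\frac{k^2+k}{2}+2m(3m-1)\right)+7=3(2k+1)^2+4(6m-1)^2,
\]
so the progression $24N+7\equiv0\pmod p$ is controlled by $3(2k+1)^2+4(6m-1)^2\equiv0\pmod p$. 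Because $4$ is a perfect square and $p\equiv5\pmod6$ gives $\left(\frac{-3}{p}\right)=-1$, this congruence forces $2k+1\equiv0$ and $6m-1\equiv0\pmod p$ simultaneously. Substituting the Lin--Wang dissection \eqref{abc1} for $f_1^3$ and the Cui--Gu dissection \eqref{a17} (with $q\mapsto q^4$) for $f_4$, the only surviving contribution to the residue class $pn+\frac{7(p^2-1)}{24}$ is the product of the two central terms $(-1)^{(p-1)/2}p\,q^{(p^2-1)/8}f_{p^2}^3$ and $q^{(p^2-1)/6}f_{4p^2}$, whose exponents add to $\frac{p^2-1}{8}+\frac{p^2-1}{6}=\frac{7(p^2-1)}{24}$.

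Extracting that progression, dividing by $q^{7(p^2-1)/24}$ and replacing $q^p$ by $q$ twice, I obtain $c\big(p^2n+\tfrac{7(p^2-1)}{24}\big)=\pm p\,c(n)$ together with $c\big(p^2n+pi+\tfrac{7(p^2-1)}{24}\big)=0$ for $i=1,\dots,p-1$; the spurious factor $\pm p$ produced by \eqref{abc1} is harmless because only the vanishing is needed. Writing $\rho_\alpha=\tfrac{7(p^{2\alpha+2}-1)}{24}$ one checks $\rho_{\alpha+1}=p^2\rho_\alpha+\rho_0$, so iterating the self-similarity relation and feeding in the vanishing gives $c\big(p^{2\alpha+2}n+p^{2\alpha+1}i+\rho_\alpha\big)=0$ for all $\alpha\ge0$, which is precisely \eqref{d141p1}. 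The argument for \eqref{d141p} is identical with $f_4$ replaced by $f_{16}$: here $24\left(\frac{k^2+k}{2}+8m(3m-1)\right)+19=3(2k+1)^2+16(6m-1)^2$, and since $16$ is again a square the same hypothesis $\left(\frac{-3}{p}\right)=-1$ isolates the central terms, whose exponents now add to $\frac{p^2-1}{8}+\frac{2(p^2-1)}{3}=\frac{19(p^2-1)}{24}$. I expect the delicate point to be the bookkeeping in this extraction step: confirming that the non-residue hypothesis annihilates every cross term, that the two central exponents match the base residue, and that the residues $\rho_\alpha$ propagate correctly through the iteration despite the extra factor $p$ from the cube dissection.
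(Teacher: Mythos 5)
Your proposal is correct, and its skeleton coincides with the paper's: reduce \eqref{eq2} modulo $32$ via \eqref{bt}, split into the classes $24n+7$ and $24n+19$ with the $2$-dissection \eqref{1byf12}, pass to $\overline{S}_3^{16m+1}$ through \eqref{mainth} with $\zeta=4$, $\eta=1$, $t=3$ for the second family, and finish with the $p^{2\alpha}$-extraction template of Theorem \ref{reff}. The one genuine divergence is the normal form at the extraction step. The paper writes the two series as $8\psi(q)f_4$ and $16\psi(q)f_{16}$ (equations \eqref{s155a} and \eqref{s155}) and runs the machinery of Theorem \ref{reff} with the Cui--Gu dissections \eqref{a2} for $\psi$ and \eqref{a17} for $f_4$, $f_{16}$, where the governing form is $3(2k+1)^2+4(6m+1)^2$ (resp.\ its $f_{16}$ analogue) and the hypothesis enters as $\left(\frac{-12}{p}\right)=-1$. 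You instead reduce to $8f_1^3f_4$ and $16f_1^3f_{16}$, which are congruent to the paper's forms since $\psi(q)=f_2^2/f_1\equiv f_1^3\pmod 4$, and you dissect the cube with Lin--Wang's \eqref{abc1}; your forms $3(2k+1)^2+4(6m-1)^2$ and $3(2k+1)^2+16(6m-1)^2$ agree with the paper's up to $m\mapsto-m$, and both routes rest on the single fact $\left(\frac{-3}{p}\right)=-1$ for $p\equiv5\pmod6$, the cofactors $4$ and $16$ being squares. Your identities such as $24\bigl(\frac{k^2+k}{2}+2m(3m-1)\bigr)+7=3(2k+1)^2+4(6m-1)^2$ check out, as do the central exponents $\frac{p^2-1}{8}+\frac{p^2-1}{6}=\frac{7(p^2-1)}{24}$ and $\frac{p^2-1}{8}+\frac{2(p^2-1)}{3}=\frac{19(p^2-1)}{24}$, and your recursion $\rho_{\alpha+1}=p^2\rho_\alpha+\rho_0$ is exactly how the paper iterates \eqref{d50} into \eqref{d51}. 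What the substitution costs is the extraneous multiplier $(-1)^{(p-1)/2}p$ from \eqref{abc1} in the self-similarity $c\bigl(p^2n+\rho_0\bigr)=\pm p\,c(n)$; you correctly note this is harmless because only the vanishing on the classes $pi$, $1\le i\le p-1$, is ever used, whereas the $\psi$-route yields the cleaner unimodular multiplier $(-1)^{\frac{\pm p-1}{6}}$. Neither route is more general; yours spares the reader one lemma (\eqref{a2}) at the price of the $\pm p$ bookkeeping.
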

					\begin{proof}  
						Thanks to \eqref{bt} the equation \eqref{eq2} takes the form
						\begin{equation}
							\sum_{n= 0}^{\infty }\overline{S}_3(12n+7)q^n\equiv 8\frac{f_8^2}{f_1^2} \pmod{32}.\label{s1551}	
						\end{equation}
						Using \eqref{1byf12} in \eqref{s1551} and extracting the terms containing $q^{2n}$ and $q^{2n+1}$ on both sides of the resulting equation, we have
						\begin{equation}
							\sum_{n= 0}^{\infty }\overline{S}_3(24n+7)q^n\equiv 8\frac{f_2^2f_4}{f_1}\equiv8\psi(q)f_{4}\pmod{32}\label{s155a}	
						\end{equation}
						and
						\begin{equation}
							\sum_{n= 0}^{\infty }\overline{S}_3(24n+19)q^n\equiv 16\frac{f_4f_8^2}{f_1}\equiv16\psi(q)f_{16}\pmod{32}.\label{s155}	
						\end{equation}
						We utilize \eqref{s155a}, to prove \eqref{d141p1}. Using \eqref{mainth} with $\zeta=4$, \( \eta= 1 \) and $t=3$, along with \eqref{s155}, we arrive at \eqref{d141p}. 
					\end{proof}
					\begin{thm}
						Let $p\ge 5$ be a prime and $\left(\frac{-6}{p}\right)=-1$. Then for all $\alpha\ge 0, n\ge 0$, we have 	\begin{equation}
							\overline{S}_{3}\left ( 48p^{2\alpha +2}n+2\left (24i+5p\right)p^{2\alpha +1}\right )\equiv0\pmod{24}	 ,\label{c4121}
						\end{equation}
						where $i$ is an integer and $1\leq i\leq p-1$.
					\end{thm}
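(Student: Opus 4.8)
The plan is to reduce the claimed congruence to a vanishing statement for a single auxiliary arithmetic function and then exploit a $p$-dissection together with the hypothesis on the Legendre symbol. I would start from \eqref{s45}, which already gives $\sum_{n\ge0}\overline{S_3}(48n+10)q^{n}\equiv 12\,\psi(q)f_2\pmod{24}$. Defining $\sum_{n\ge0}d(n)q^{n}=\psi(q)f_2$, this reads $\overline{S_3}(48n+10)\equiv 12\,d(n)\pmod{24}$, so it suffices to prove that $d$ vanishes on a suitable family of progressions; the entire argument then runs parallel to the proof of Theorem~\ref{reff}.

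Next I would insert the $p$-dissection of $\psi(q)$ from \eqref{a2} and the $p$-dissection of $f_2$ obtained from \eqref{a17} with $q\mapsto q^2$ into the product $\psi(q)f_2$. Collecting the terms of $\psi(q)f_2$ whose $q$-exponent lies in the residue class $\tfrac{5(p^2-1)}{24}\pmod p$ leads to the congruence $\tfrac{m^2+m}{2}+(3k^2+k)\equiv\tfrac{5(p^2-1)}{24}\pmod p$. Writing $\tfrac{m^2+m}{2}=\tfrac{(2m+1)^2-1}{8}$ and $3k^2+k=\tfrac{(6k+1)^2-1}{12}$ and clearing denominators, this is equivalent to $3(2m+1)^2+2(6k+1)^2\equiv0\pmod p$. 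Since $\left(\frac{-6}{p}\right)=-1$, the quadratic form $3a^2+2b^2$ is anisotropic modulo $p$, so its only zero is $2m+1\equiv 6k+1\equiv0\pmod p$.

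The crucial step is the four-case bookkeeping between the generic theta contributions and the two distinguished terms (the $\tfrac{p^2-1}{8}$-term of $\psi$ and the $\tfrac{p^2-1}{12}$-term of $f_2$). I would verify that the generic exponents of $\psi$ never meet the required residue, because $2m+1\not\equiv0\pmod p$ for $0\le m\le\tfrac{p-3}{2}$ (the ``Furthermore'' remark following \eqref{a2}), while the generic exponents of $f_2$ are barred by $6k+1\not\equiv0\pmod p$, which is exactly the index $k=\tfrac{\pm p-1}{6}$ excluded from the sum in \eqref{a17}. Hence only the product of the two distinguished terms survives, yielding $\sum_{n\ge0}d\!\left(pn+\tfrac{5(p^2-1)}{24}\right)q^{\,pn+\frac{5(p^2-1)}{24}}=(-1)^{\frac{\pm p-1}{6}}q^{\frac{5(p^2-1)}{24}}\psi(q^{p^2})f_{2p^2}$.

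Finally I would divide by $q^{(5p^2-5)/24}$, replace $q^p$ by $q$, and compare coefficients to obtain the self-similar relation $d\!\left(p^2n+\tfrac{5(p^2-1)}{24}\right)=(-1)^{\frac{\pm p-1}{6}}d(n)$ together with $d\!\left(p^2n+pi+\tfrac{5(p^2-1)}{24}\right)=0$ for $i=1,\dots,p-1$. Iterating the self-similar relation and then substituting $n\mapsto p^2n+pi+\tfrac{5(p^2-1)}{24}$ gives $d\!\left(p^{2\alpha+2}n+p^{2\alpha+1}i+\tfrac{5(p^{2\alpha+2}-1)}{24}\right)=0$, and feeding this into $\overline{S_3}(48n+10)\equiv 12\,d(n)\pmod{24}$ reproduces precisely the argument $48p^{2\alpha+2}n+2(24i+5p)p^{2\alpha+1}$ of \eqref{c4121}. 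The main obstacle I anticipate is the case analysis of the third step: one must confirm that the hypothesis $\left(\frac{-6}{p}\right)=-1$ is exactly what annihilates every mixed term and that the two index restrictions eliminate the remaining cross terms; the surrounding dissection and back-substitution are then routine.
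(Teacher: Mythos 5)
Your proposal is correct and follows exactly the route the paper intends: its proof of this theorem is the one-line remark that the argument of Theorem~\ref{reff} applies with \eqref{s45} in place of \eqref{s55}, and your reconstruction—defining $d(n)$ by $\psi(q)f_2$, reducing to $3(2m+1)^2+2(6k+1)^2\equiv 0\pmod p$, and using $\left(\frac{-6}{p}\right)=-1$ to kill all but the product of the two distinguished terms—supplies precisely the omitted details, including the correct back-substitution $48\bigl(p^{2\alpha+2}n+p^{2\alpha+1}i+\tfrac{5(p^{2\alpha+2}-1)}{24}\bigr)+10 = 48p^{2\alpha+2}n+2(24i+5p)p^{2\alpha+1}$.
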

					\begin{proof}  
						The proof is similar to the proof of the Theorem \ref{reff}, where \eqref{s45} is used.
					\end{proof}
					\begin{thm}
						Let $p\ge 5$ be a prime and $\left(\frac{-18}{p}\right)=-1,\alpha\geq 0\hspace{1mm} \text{and} \hspace{1mm}n\ge0$, we have 
						\begin{equation}
							\overline{S}_3^{8m + 1}\left ( 48p^{2\alpha +2} n+2\left (24i+ 11p\right)p^{2\alpha +1} \right )\equiv0\pmod{16}
						\end{equation}
						where $i$ is an integer and $1\leq i\leq p-1$.
					\end{thm}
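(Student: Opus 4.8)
The plan is to collapse the $r$-fold statement to the $r=1$ case and then run the $p$-dissection machinery of Theorem~\ref{mains9}. First I would apply \eqref{mainth} with $\zeta=3$, $\eta=1$, $t=3$, which gives $\overline{S}_3^{8m+1}(N)\equiv\overline{S}_3(N)\pmod{16}$ for every $N$; hence it suffices to prove the vanishing with $\overline{S}_3$ in place of $\overline{S}_3^{8m+1}$. Next I would locate the correct base series. Since $p^2\equiv 1\pmod{24}$ for $p\ge 5$, one checks that $48p^{2\alpha+2}n+2(24i+11p)p^{2\alpha+1}\equiv 22\pmod{48}$, so the natural starting point is $\sum_n\overline{S}_3(48n+22)q^n$. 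Beginning from \eqref{s38ne2}, namely $\sum_n\overline{S}_3(24n+22)q^n\equiv 8f_2^2f_6^3\pmod{16}$, one notes that $f_2^2f_6^3$ is supported only on even powers of $q$ (this is exactly what produces \eqref{mod16}); extracting the even part and replacing $q^2$ by $q$ gives $\sum_n\overline{S}_3(48n+22)q^n\equiv 8f_1^2f_3^3\equiv 8f_2f_3^3\pmod{16}$, where the last step uses $f_1^2\equiv f_2\pmod 2$ from \eqref{bt} together with the prefactor $8$.

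The heart of the proof then mirrors Theorem~\ref{mains9}. Define $\sum_n c(n)q^n=f_2f_3^3$, so that $\overline{S}_3^{8m+1}(48n+22)\equiv 8c(n)\pmod{16}$ and it remains to show that $c$ vanishes at the relevant arguments. Writing a typical exponent of $f_2f_3^3$ as $j(3j-1)+3\,\tfrac{k(k+1)}{2}$, the identity $24\bigl(j(3j-1)+3\tfrac{k(k+1)}{2}\bigr)+11=2(6j-1)^2+9(2k+1)^2$ shows that the exponents $\equiv\tfrac{11(p^2-1)}{24}\pmod p$ are governed by $2(6j-1)^2+9(2k+1)^2\equiv 0\pmod p$. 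Because $\left(\tfrac{-18}{p}\right)=\left(\tfrac{-2}{p}\right)=-1$ (as $9$ is a square), this congruence forces $6j-1\equiv 0$ and $2k+1\equiv 0\pmod p$, so among the terms produced by substituting \eqref{a17} into $f_2$ (with $q\mapsto q^2$) and \eqref{abc1} into $f_3^3$ (with $q\mapsto q^3$), only the product of the two ``diagonal'' leftover terms survives the extraction of powers $\equiv\tfrac{11(p^2-1)}{24}\pmod p$. Their exponents add up to $\tfrac{p^2-1}{12}+\tfrac{3(p^2-1)}{8}=\tfrac{11(p^2-1)}{24}$, confirming the shift.

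Extracting those powers, dividing by $q^{11(p^2-1)/24}$, and replacing $q^p$ by $q$ yields (working modulo $2$) $\sum_n c\bigl(p^2n+\tfrac{11(p^2-1)}{24}\bigr)q^n\equiv f_2f_3^3$, whence $c\bigl(p^2n+\tfrac{11(p^2-1)}{24}\bigr)\equiv c(n)\pmod 2$ and $c\bigl(p^2n+pi+\tfrac{11(p^2-1)}{24}\bigr)\equiv 0\pmod 2$ for $i=1,\dots,p-1$. Iterating the first relation exactly as in \eqref{44a12} gives $c\bigl(p^{2\alpha}n+\tfrac{11(p^{2\alpha}-1)}{24}\bigr)\equiv c(n)\pmod 2$, and substituting $n\mapsto p^2n+pi+\tfrac{11(p^2-1)}{24}$ produces $c\bigl(p^{2\alpha+2}n+p^{2\alpha+1}i+\tfrac{11(p^{2\alpha+2}-1)}{24}\bigr)\equiv 0\pmod 2$. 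Since $48\bigl(p^{2\alpha+2}n+p^{2\alpha+1}i+\tfrac{11(p^{2\alpha+2}-1)}{24}\bigr)+22=48p^{2\alpha+2}n+2(24i+11p)p^{2\alpha+1}$, multiplying by $8$ and recalling $\overline{S}_3^{8m+1}(48N+22)\equiv 8c(N)\pmod{16}$ delivers the claim.

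I expect the main obstacle to be bookkeeping rather than conceptual. The delicate point is the factor $p$ in the diagonal term of \eqref{abc1}: it makes the exact self-similarity read $c\bigl(p^2n+\tfrac{11(p^2-1)}{24}\bigr)=\pm p\,c(n)$, which is clean only modulo $2$ (where $p\equiv 1$). Since the target modulus carries the factor $8$, working modulo $2$ throughout is harmless; alternatively one may replace $f_3^3$ by $\psi(q^3)$, which is legitimate because $f_3^3\equiv\psi(q^3)\pmod 2$, and apply \eqref{a2} in place of \eqref{abc1}. This removes the stray factor $p$ and yields the self-similarity $c\bigl(p^2n+\tfrac{11(p^2-1)}{24}\bigr)=(-1)^{(\pm p-1)/6}c(n)$ on the nose, so the vanishing $c\bigl(p^{2\alpha+2}n+p^{2\alpha+1}i+\tfrac{11(p^{2\alpha+2}-1)}{24}\bigr)=0$ holds exactly. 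A secondary check is to confirm that the non-diagonal summands genuinely land in residue classes distinct from $\tfrac{11(p^2-1)}{24}\pmod p$, which is precisely where the hypothesis $\left(\tfrac{-18}{p}\right)=-1$ is used in full.
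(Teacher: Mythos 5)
Your proposal is correct and follows essentially the same route as the paper: the paper likewise extracts the even part of \eqref{s38ne2} to obtain $\sum_{n\ge0}\overline{S_3}(48n+22)q^n\equiv 8f_2f_3^3\pmod{16}$ (its \eqref{s39n30x16}) and then invokes the $p$-dissection argument of Theorem~\ref{reff}, the step you carry out explicitly with \eqref{a17} and \eqref{abc1} (or \eqref{a2}). Your observation that the factor $p$ in the diagonal term of \eqref{abc1} is harmless modulo $2$ correctly supplies a detail the paper leaves implicit behind the phrase ``follows the proof of Theorem~\ref{reff}.''
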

					\begin{proof}  
						Equation \eqref{s38ne2} can be rewritten as
						\begin{align}
							\sum_{n= 0}^{\infty } \overline{S_3}(48n+22) q^{n} &\equiv8f_2 f_3^3\pmod{16}.\label{s39n30x16}
						\end{align}
						Remaining part of the proof follows the proof of Theorem \ref{reff}, where \eqref{s39n30x16} is used.
					\end{proof}
					\begin{thm}
						For any prime $p\equiv5\pmod{6}$ and $\left(\frac{-3}{p}\right)=-1$. Then for all $\alpha\ge 0, n\ge 0$, we have 	\begin{equation}
							\overline{S}_{3}\left ( 36p^{2\alpha +2}n+6\left (6i+p\right)p^{2\alpha +1}\right )\equiv0\pmod{12}	 ,\label{c412}
						\end{equation}
						where $i$ is an integer and $1\leq i\leq p-1$.
					\end{thm}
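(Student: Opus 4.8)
The plan is to follow verbatim the template of Theorem~\ref{mains9} and Theorem~\ref{reff}, taking as input the congruence \eqref{s40}, namely $\sum_{n\ge0}\overline{S_3}(36n+6)q^n\equiv 6\,\psi(q)f_1\pmod{12}$. The two representations $6f_1^2f_2$ and $6\psi(q)f_1$ appearing there agree modulo $12$ because $f_1^2\equiv f_2\pmod2$ by \eqref{bt}, so $f_1^2f_2\equiv f_2^2=\psi(q)f_1\pmod2$, and the external factor $6$ means only the cofactor congruence modulo $2$ is needed. First I would set $\sum_{n\ge0}e(n)q^n=\psi(q)f_1$, so that $\overline{S_3}(36n+6)\equiv 6\,e(n)\pmod{12}$, reducing the claim to showing that $e$ vanishes at the prescribed indices.

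Next I would dissect $\psi(q)$ by \eqref{a2} and $f_1$ by \eqref{a17} and multiply the two expansions. Each piece of $\psi$ carries a residue $u=2m+1\pmod p$ and each piece of $f_1$ a residue $v=6k+1\pmod p$, the two diagonal terms corresponding to $u\equiv0$ and $v\equiv0$; a generic product term has $q$-exponent congruent modulo $p$ to $\tfrac{m^2+m}{2}+\tfrac{3k^2+k}{2}$, and $24\bigl(\tfrac{m^2+m}{2}+\tfrac{3k^2+k}{2}\bigr)=3u^2+v^2-4$. The target exponent is $\tfrac{p^2-1}{6}$, for which $24\cdot\tfrac{p^2-1}{6}=4p^2-4$, so selecting the terms with exponent $\equiv\tfrac{p^2-1}{6}\pmod p$ amounts to solving $3u^2+v^2\equiv0\pmod p$. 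This is the crux: since $\left(\tfrac{-3}{p}\right)=-1$ (exactly the condition $p\equiv5\pmod6$ for odd primes), the relation $v^2\equiv-3u^2\pmod p$ forces $u\equiv v\equiv0\pmod p$, excluding every theta piece and leaving only the product of the two diagonal terms $q^{(p^2-1)/8}f_{2p^2}^2/f_{p^2}$ and $(-1)^{(\pm p-1)/6}q^{(p^2-1)/24}f_{p^2}$, whose product is $(-1)^{(\pm p-1)/6}q^{(p^2-1)/6}f_{2p^2}^2$.

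From this isolation I would read off, exactly as in Theorem~\ref{mains9}, that $\sum_{n\ge0}e\!\left(pn+\tfrac{p^2-1}{6}\right)q^n=(-1)^{(\pm p-1)/6}f_{2p}^2$ after dividing by $q^{(p^2-1)/6}$ and replacing $q^p$ by $q$; since $f_{2p}^2$ supports only exponents divisible by $p$, extracting $q^{pn}$ yields the self-similarity $e\!\left(p^2n+\tfrac{p^2-1}{6}\right)=(-1)^{(\pm p-1)/6}e(n)$ together with the vanishing $e\!\left(p^2n+pi+\tfrac{p^2-1}{6}\right)=0$ for $1\le i\le p-1$. Iterating the self-similarity gives $e\!\left(p^{2\alpha}n+\tfrac{p^{2\alpha}-1}{6}\right)=(-1)^{\alpha(\pm p-1)/6}e(n)$, and substituting $n\mapsto p^2n+pi+\tfrac{p^2-1}{6}$ and invoking the vanishing produces $e\!\left(p^{2\alpha+2}n+p^{2\alpha+1}i+\tfrac{p^{2\alpha+2}-1}{6}\right)=0$.

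Finally I would translate the index back: with $M=p^{2\alpha+2}n+p^{2\alpha+1}i+\tfrac{p^{2\alpha+2}-1}{6}$ one checks $36M+6=36p^{2\alpha+2}n+6(6i+p)p^{2\alpha+1}$, so that $\overline{S_3}(36M+6)\equiv6\,e(M)\equiv0\pmod{12}$ gives precisely \eqref{c412}. I expect the only genuinely delicate step to be the Legendre-symbol argument that isolates the single surviving term; everything else is the now-standard iteration, so the write-up mainly needs care in tracking the constant $\tfrac{p^2-1}{6}$ and the sign $(-1)^{(\pm p-1)/6}$ through the two dissections.
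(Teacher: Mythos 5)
Your proof is correct and is essentially the paper's own argument: the paper disposes of this theorem with the one-line remark that it follows the template of Theorem~\ref{reff} applied to \eqref{s40}, which is exactly what you carry out---defining the auxiliary series from $\psi(q)f_1$, reducing the extraction to $3(2m+1)^2+(6k+1)^2\equiv 0\pmod{p}$, and using $\left(\frac{-3}{p}\right)=-1$ to isolate the single diagonal product $(-1)^{\frac{\pm p-1}{6}}q^{\frac{p^2-1}{6}}f_{2p^2}^2$ before the standard self-similarity iteration. Your index translation $36M+6=36p^{2\alpha+2}n+6(6i+p)p^{2\alpha+1}$ and the sign bookkeeping both check out against the corresponding steps in Theorem~\ref{reff}.
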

					\begin{proof}  
						The proof is similar to the proof of the Theorem \ref{reff}, where \eqref{s40} is used.
					\end{proof}
				\begin{thm}
					Let $p\ge 5$ be a prime and $\left(\frac{-1}{p}\right)=-1$. Then for all $\alpha\ge 0, n\ge 0$, we have 	\begin{equation}
						\overline{S}_{3}\left ( 48p^{2\alpha +2}n+6\left (8i+p\right)p^{2\alpha +1}\right )\equiv0\pmod{12}	 ,\label{c4121a}
					\end{equation}
					where $i$ is an integer and $1\leq i\leq p-1$.
				\end{thm}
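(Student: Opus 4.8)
The plan is to reduce \eqref{c4121a} to a vanishing statement for a single auxiliary coefficient sequence and then read that vanishing off a $p$-dissection, in the spirit of Theorem~\ref{reff}. First I would isolate the relevant base congruence. The series on the right of \eqref{s39n3} is $6f_2^3$, and since $f_2^3=\sum_{k\ge0}(-1)^k(2k+1)q^{k(k+1)}$ is supported on even powers of $q$, extracting its even part and replacing $q^2$ by $q$ gives
\[
\sum_{n\ge0}\overline{S_3}(48n+6)\,q^n\equiv 6f_1^3\pmod{12}.
\]
Writing $\sum_{n\ge0}c(n)q^n=f_1^3$, this reads $\overline{S_3}(48n+6)\equiv 6c(n)\pmod{12}$, so it suffices to control $c$ along the progression $p^{2}m+pi+\tfrac{p^2-1}{8}$.

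Next I would apply the cubic $p$-dissection \eqref{abc1} to $f_1^3$. Each $B_k(q^p)$ is supported on exponents divisible by $p$, so the $k$-th summand of \eqref{abc1} contributes only to exponents $\equiv\frac{k^2+k}{2}\pmod p$, whereas the exceptional term $(-1)^{\frac{p-1}{2}}pq^{(p^2-1)/8}f_{p^2}^3$ contributes only in the class $\frac{p^2-1}{8}\pmod{p^2}$. The decisive step is that $\frac{k^2+k}{2}\equiv\frac{p^2-1}{8}\pmod p$ is equivalent to $(2k+1)^2\equiv0\pmod p$, whose only solution $k\equiv\frac{p-1}{2}\pmod p$ lies outside the index range $-\frac{p-1}{2}\le k\le\frac{p-3}{2}$. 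Hence no theta summand reaches the class $\frac{p^2-1}{8}\pmod p$, and extracting that class yields
\[
c\!\left(p^{2}m+\tfrac{p^2-1}{8}\right)=(-1)^{\frac{p-1}{2}}p\,c(m),\qquad c\!\left(p^{2}m+pi+\tfrac{p^2-1}{8}\right)=0\quad(1\le i\le p-1).
\]

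I would then iterate the self-similarity relation exactly as in the passage from \eqref{44a11} to \eqref{44a12}, obtaining $c\!\left(p^{2\alpha}m+\frac{p^{2\alpha}-1}{8}\right)=(-1)^{\frac{p-1}{2}\alpha}p^{\alpha}c(m)$; substituting $m\mapsto p^2m+pi+\frac{p^2-1}{8}$ and using the vanishing above gives $c\!\left(p^{2\alpha+2}m+p^{2\alpha+1}i+\frac{p^{2\alpha+2}-1}{8}\right)=0$ for $1\le i\le p-1$. Finally, the arithmetic identity $48\big(p^{2\alpha+2}m+p^{2\alpha+1}i+\frac{p^{2\alpha+2}-1}{8}\big)+6=48p^{2\alpha+2}m+6(8i+p)p^{2\alpha+1}$ feeds this back into $\overline{S_3}(48n+6)\equiv 6c(n)\pmod{12}$ and produces $6\cdot0=0$, which is \eqref{c4121a}.

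The main obstacle is the residue-class bookkeeping in the dissection: one must verify that $k\equiv\frac{p-1}{2}$ is genuinely excluded by the half-open index range, and that the exceptional term of \eqref{abc1} populates only the class $\frac{p^2-1}{8}\pmod{p^2}$, which is precisely what forces the off-diagonal coefficients $1\le i\le p-1$ to vanish. I note that this route never invokes a quadratic residue condition, so the congruence in fact holds for every prime $p\ge5$, the hypothesis $\left(\frac{-1}{p}\right)=-1$ being a sufficient but inessential restriction. An alternative paralleling Theorem~\ref{reff} is to use $6f_1^3\equiv 6f_1f_2\pmod{12}$ and dissect $f_1f_2$ via \eqref{a17} (once in $q$ and once in $q^2$); there the diagonal is forced by the form $(6k+1)^2+2(6j+1)^2\equiv0\pmod p$, so that variant carries its own Legendre-symbol hypothesis.
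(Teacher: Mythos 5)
Your proposal is correct and takes essentially the same route as the paper: the authors likewise pass from \eqref{s39n3} to $\sum_{n\ge0}\overline{S_3}(48n+6)q^n\equiv 6f_1^3\pmod{12}$ (their \eqref{s39n30x1}) and then invoke the Theorem~\ref{reff} template, with Wang's dissection \eqref{abc1} doing exactly the work you assign it, including the exclusion of $k\equiv\frac{p-1}{2}$ from the index range and the final substitution $48\bigl(p^{2\alpha+2}n+p^{2\alpha+1}i+\frac{p^{2\alpha+2}-1}{8}\bigr)+6=48p^{2\alpha+2}n+6(8i+p)p^{2\alpha+1}$. Your closing observation is also accurate: because the diagonal equation $(2k+1)^2\equiv0\pmod p$ forces itself with no quadratic-residue input, the hypothesis $\left(\frac{-1}{p}\right)=-1$ is never used, so the congruence in fact holds for every prime $p\ge5$, a genuine (if minor) sharpening of the theorem as stated.
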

				\begin{proof}  
					Thanks to \eqref{bt}, \eqref{s39n3} can be rewritten as
					\begin{align}
						\displaystyle \sum_{n= 0}^{\infty }\overline{S_3}(48n+6) q^{n}  &\equiv6f_1^3\pmod{12}.\label{s39n30x1}
					\end{align}
					Remaining part of the proof follows the proof of Theorem \ref{reff}, where \eqref{s39n30x1} is used.
				\end{proof}
			\begin{thm}
				For any prime $p\equiv5\pmod{6},\alpha\geq 0\hspace{1mm} \text{and} \hspace{1mm}n\ge0$, we have 
				\begin{equation}
					\overline{S}_3^{4m + 1}\left ( 6p^{2\alpha +2}n+\left (6i+ p\right)p^{2\alpha +1} \right )\equiv0\pmod{8} ,\label{a41}
				\end{equation}
				where $i$ is an integer and $1\leq i\leq p-1$.
			\end{thm}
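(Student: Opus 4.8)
The plan is to collapse the $(4m+1)$-colored function to the uncolored one modulo $8$, isolate the single residue class modulo $6$ that matters, and then run the theta-dissection argument of Theorem~\ref{mains9} on a product of Ramanujan functions. First I would apply \eqref{mainth} with $\zeta=2$, $\eta=1$, $t=3$, which gives $\overline{S}_3^{4m+1}(n)\equiv\overline{S_3}(n)\pmod 8$, and combine it with \eqref{tsp3n32182} to obtain
\[
\sum_{n\ge 0}\overline{S}_3^{4m+1}(6n+1)\,q^n\equiv 2f_2^2\pmod 8 .
\]
Since $p\equiv 5\pmod 6$ gives $p^2\equiv 1\pmod 6$, every argument $6p^{2\alpha+2}n+(6i+p)p^{2\alpha+1}$ is $\equiv 1\pmod 6$; writing it as $6N+1$ gives $N=p^{2\alpha+2}n+ip^{2\alpha+1}+\tfrac{p^{2\alpha+2}-1}{6}$, so the theorem is exactly a vanishing statement for the coefficients of $2f_2^2$. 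Using the exact identity $f_2^2=\psi(q)f_1$, I would recast the generating function as $2\psi(q)f_1$, the shape demanded by the dissection lemmas.

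Next I would set $\sum_{n\ge 0}d(n)q^n=\psi(q)f_1$, so that $\overline{S}_3^{4m+1}(6n+1)\equiv 2d(n)\pmod 8$, and substitute the $p$-dissections \eqref{a2} for $\psi$ and \eqref{a17} for $f_1$. The generic summands contribute powers congruent to $\tfrac{m^2+m}{2}$ and $\tfrac{3a^2+a}{2}$ modulo $p$, so selecting the power $\equiv\tfrac{p^2-1}{6}\pmod p$ reduces, after multiplying through by $24$, to
\[
\left(6a+1\right)^2+3\left(2m+1\right)^2\equiv 0\pmod p .
\]
Because $p\equiv 5\pmod 6$ forces $\left(\tfrac{-3}{p}\right)=-1$, the only solution is $6a+1\equiv 2m+1\equiv 0\pmod p$, i.e. the two exceptional indices excluded from the sums in \eqref{a2} and \eqref{a17}. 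Thus only the product of the two exceptional terms survives, and after dividing by $q^{(p^2-1)/6}$ and replacing $q^p$ by $q$ I expect
\[
\sum_{n\ge 0}d\!\left(pn+\tfrac{p^2-1}{6}\right)q^n=\varepsilon\,\psi(q^p)\,f_p,\qquad\varepsilon=\pm 1 ,
\]
with no spurious factor of $p$, since the exceptional terms of $\psi$ and $f_1$ both carry unit coefficients; this is why I favour $f_2^2=\psi(q)f_1$ over the reduction $f_2^2\equiv f_1^4\pmod 4$, whose $f_1^3$-factor would inject an extra $p$ through \eqref{abc1}.

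Because $\psi(q^p)f_p$ is a series in $q^p$, comparing coefficients yields the self-similarity $d\!\left(p^2n+\tfrac{p^2-1}{6}\right)=\varepsilon\,d(n)$ together with $d\!\left(p^2n+pi+\tfrac{p^2-1}{6}\right)=0$ for $1\le i\le p-1$. Iterating the first relation gives $d\!\left(p^{2\alpha}n+\tfrac{p^{2\alpha}-1}{6}\right)=\varepsilon^{\alpha}d(n)$, and substituting $n\mapsto p^2n+pi+\tfrac{p^2-1}{6}$ into it collapses to $d\!\left(p^{2\alpha+2}n+p^{2\alpha+1}i+\tfrac{p^{2\alpha+2}-1}{6}\right)=0$. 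Since this is precisely $d(N)=0$ for the $N$ identified above, the relation $\overline{S}_3^{4m+1}(6N+1)\equiv 2d(N)\pmod 8$ yields \eqref{a41}. The genuinely delicate step is the solution count: I must check that over the index ranges of \eqref{a2} and \eqref{a17} the congruence $(6a+1)^2+3(2m+1)^2\equiv 0\pmod p$ holds only at the excluded exceptional indices, so that no generic cross-term contaminates the extracted series—this is exactly where the hypothesis $\left(\tfrac{-3}{p}\right)=-1$ is used.
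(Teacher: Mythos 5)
Your proposal is correct and follows essentially the same route as the paper: the paper's proof is a one-line reference telling the reader to combine \eqref{mainth} (with $\zeta=2$, $\eta=1$, $t=3$) and \eqref{tsp3n32182}, i.e.\ $\sum_{n\ge0}\overline{S}_3^{4m+1}(6n+1)q^n\equiv 2f_2^2=2\psi(q)f_1\pmod{8}$, and then to run the dissection template of Theorem~\ref{reff} using \eqref{a2} and \eqref{a17}. Your reduction to $(6k+1)^2+3(2m+1)^2\equiv0\pmod p$, the use of $\left(\frac{-3}{p}\right)=-1$ for $p\equiv5\pmod 6$ to isolate the product of the two exceptional terms, and the resulting self-similarity $d\left(p^2n+\frac{p^2-1}{6}\right)=\pm d(n)$ with vanishing at $d\left(p^2n+pi+\frac{p^2-1}{6}\right)$ are exactly the intended details.
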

			\begin{proof}  
				The proof is similar to the proof of the Theorem \ref{reff}, where \eqref{tsp3n32182} is used.
			\end{proof}
		\begin{thm}
			Let $p\ge 5$ be a prime with $\left(\frac{-1}{p}\right)=-1$. For all$\alpha, n\geq 0$,
			\begin{equation}
				\overline{S}_3^{4m + 1}\left ( 24p^{2\alpha +2} n+6\left (4i+ p\right)p^{2\alpha +1} \right )\equiv0\pmod{8} ,\label{}
			\end{equation}
			for each  $i\in\{1,\dots,p-1\}$.
		\end{thm}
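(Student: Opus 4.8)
The plan is to mirror the template established in Theorem~\ref{mains9} and Theorem~\ref{reff}, the only genuinely new ingredient being the choice of modular form and the associated Legendre-symbol condition. First I would invoke \eqref{mainth} with $\zeta=2$ and $\eta=1$ to reduce the claim to a statement about $\overline{S_3}$ itself, namely $\overline{S}_3^{4m+1}(n)\equiv\overline{S_3}(n)\pmod 8$. The argument $24p^{2\alpha+2}n+6(4i+p)p^{2\alpha+1}$ is congruent to $6\pmod{24}$ (using $p^2\equiv1\pmod{24}$ for $p\ge5$), so it lies in the progression governed by \eqref{s37n2}, which already gives
\[
\sum_{n\ge0}\overline{S_3}(24n+6)q^n\equiv 6\,\psi(q)f_1^3\pmod 8 .
\]
Writing $\psi(q)f_1^3=\sum_{n\ge0}g(n)q^n$, this reads $\overline{S_3}(24n+6)\equiv 6g(n)\pmod 8$, so it suffices to prove that $g$ vanishes on the relevant progression.

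Next I would substitute \eqref{a2} for $\psi(q)$ and \eqref{abc1} for $f_1^3$ and extract the terms whose $q$-exponent is $\equiv\frac{p^2-1}{4}\pmod p$. Matching exponents leads to the congruence
\[
\frac{m^2+m}{2}+\frac{k^2+k}{2}\equiv\frac{p^2-1}{4}\pmod p,
\]
which after multiplication by $8$ becomes $(2m+1)^2+(2k+1)^2\equiv0\pmod p$. This is precisely where the hypothesis $\left(\frac{-1}{p}\right)=-1$ enters: it forces $2m+1\equiv 2k+1\equiv0\pmod p$, so in \eqref{a2} only the central term $q^{(p^2-1)/8}f_{2p^2}^2/f_{p^2}$ survives, and in \eqref{abc1} only the isolated term $(-1)^{(p-1)/2}pq^{(p^2-1)/8}f_{p^2}^3$ survives (no admissible $k$ in the range satisfies $2k+1\equiv0\pmod p$). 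Taking the product of these two surviving pieces, dividing by $q^{(p^2-1)/4}$ and twice replacing $q^p$ by $q$, I expect the self-similar pair
\[
g\!\left(p^2n+\tfrac{p^2-1}{4}\right)=(-1)^{(p-1)/2}p\,g(n),\qquad g\!\left(p^2n+pi+\tfrac{p^2-1}{4}\right)=0\quad(1\le i\le p-1),
\]
where I have used $\psi(q)f_1^3=f_1^2f_2^2$ to recognise the reduced form as $(-1)^{(p-1)/2}p\,\psi(q)f_1^3$.

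Finally I would iterate the first relation to obtain $g\!\left(p^{2\alpha}n+\frac{p^{2\alpha}-1}{4}\right)=\big((-1)^{(p-1)/2}p\big)^{\alpha}g(n)$, substitute $n\mapsto p^2n+pi+\frac{p^2-1}{4}$, and apply the vanishing relation to conclude $g\!\left(p^{2\alpha+2}n+p^{2\alpha+1}i+\frac{p^{2\alpha+2}-1}{4}\right)=0$ for $1\le i\le p-1$. Feeding this index back into $\overline{S_3}(24n+6)\equiv6g(n)$ and simplifying $24\big(p^{2\alpha+2}n+p^{2\alpha+1}i+\frac{p^{2\alpha+2}-1}{4}\big)+6=24p^{2\alpha+2}n+6(4i+p)p^{2\alpha+1}$ then yields the stated congruence modulo $8$.

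The routine parts are the exponent bookkeeping and the two $q^p\mapsto q$ substitutions. The one point needing care—and the main difference from Theorem~\ref{mains9}—is the spurious factor $p$ in the self-similar relation, coming from the coefficient $p$ of the isolated term in \eqref{abc1}; since the conclusion rests on the \emph{exact} identity $g(\cdots)=0$ rather than on a congruence for $g$, this factor is harmless, but it must be tracked correctly through the iteration. I would also verify carefully the elementary fact that $(2m+1)^2+(2k+1)^2\equiv0\pmod p$ admits no solution with $2m+1,2k+1\not\equiv0$ exactly when $\left(\frac{-1}{p}\right)=-1$, since it is this observation that isolates the two central terms.
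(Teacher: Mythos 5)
Your proposal is correct and is essentially identical to the paper's proof, which consists of the single remark that one argues as in Theorem~\ref{reff} starting from \eqref{s37n2}: after reducing $\overline{S}_3^{4m+1}$ to $\overline{S_3}$ via \eqref{mainth} (with $\zeta=2$, $\eta=1$), one substitutes \eqref{a2} and \eqref{abc1} into $\psi(q)f_1^3=f_1^2f_2^2$, uses that $\left(\tfrac{-1}{p}\right)=-1$ makes $(2m+1)^2+(2k+1)^2\equiv 0\pmod p$ have only the trivial solution so that only the two isolated terms survive, and iterates exactly as you describe. Your tracking of the harmless factor $(-1)^{(p-1)/2}p$ in the self-similar relation is also the right bookkeeping, since the conclusion rests on the exact vanishing $g\left(p^2n+pi+\tfrac{p^2-1}{4}\right)=0$ rather than on a congruence.
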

		\begin{proof}  
			The proof is similar to the proof of the Theorem \ref{reff}, where \eqref{s37n2} is used.
		\end{proof}

\section{Acknowledgment}
The authors thank Prof. Mohammed L. Nadji for sharing a copy of his research article \cite{nadji2021}.

{}
\end{document}